\numberwithin{equation}{section}
\renewcommand{\l@section}{\@dottedtocline{1}{1.5em}{2.3em}}
\def\arXiv#1{\href{http://arXiv.org/abs/#1}{arXiv:#1}}
\newtheorem*{thmnn}{Theorem}
\newtheorem{thm}{Theorem}[section]
\newtheorem{prop}[thm]{Proposition}
\newtheorem{cor}[thm]{Corollary}
\theoremstyle{remark}
\newtheorem*{rmk}{Remark}
\newcommand{\sfcaption}[1]{\caption{{\sf #1}}}
\newcommand{\Tr}{\operatorname{Tr}}
\newcommand{\spmat}[1]{\left( \begin{smallmatrix} #1 \end{smallmatrix} \right)}
\newcommand{\jac}{\operatorname{Jac}}
\newcommand{\Disc}{\operatorname{Disc}}
\newcommand{\End}{\operatorname{End}}
\newcommand{\Mod}{\operatorname{Mod}}
\newcommand{\Sp}{\operatorname{Sp}}
\newcommand{\PSp}{\operatorname{PSp}}
\newcommand{\teich}{\operatorname{Teich}}
\newcommand{\pic}{\operatorname{Pic}}
\newcommand{\M}{\mathcal M}
\newcommand{\ord}{\mathcal O}
\newcommand{\pp}{\mathbb P}
\newcommand{\cc}{\mathbb C}
\newcommand{\hh}{\mathbb H}
\newcommand{\rr}{\mathbb R}
\newcommand{\qq}{\mathbb Q}
\newcommand{\zz}{\mathbb Z}
\newlength{\polywidth}
\def \polysize {\footnotesize}
\newcommand{\als}{\addlinespace[0.4em]}
\newcommand{\ppoly}[1]{\parbox[c]{6.25in}{\hangindent=2.0em \hangafter=1 $#1$} }
\newcommand{\wD}{w_D}
\newcommand{\bD}{b_D}
\DeclareMathOperator{\PSL}{PSL}
\begin{document}
\title{Algebraic models and arithmetic geometry\\ of Teichm\"uller curves in genus two}

\author{Abhinav Kumar and Ronen E. Mukamel}
\date{September 5, 2016}

\maketitle

\begin{abstract}
A Teichm\"uller curve is an algebraic and isometric immersion of an
algebraic curve into the moduli space of Riemann surfaces.  We give
the first explicit algebraic models of Teichm\"uller curves of
positive genus.  Our methods are based on the study of certain Hilbert
modular forms and the use of Ahlfors's variational formula to identify
eigenforms for real multiplication on genus two Jacobians.  We also
present evidence that Teichm\"uller curves admit a rich arithmetic
geometry by exhibiting examples with small primes of bad reduction and
notable divisors supported at their cusps.
\end{abstract}

\tableofcontents

\section{Introduction}
\label{sec:introduction}
Let $\M_g$ denote the moduli space of Riemann surfaces of genus $g$.
The space $\M_g$ can be viewed as an algebraic variety and carries a
natural Teichm\"uller metric.  An algebraic immersion of a curve into
moduli space
\[ f : C \to \M_g \]
is a {\em Teichm\"uller curve} if $C$ is biholomorphic to a finite
volume hyperbolic Riemann surface $\hh/\Gamma$ in such a way that $f$
induces a local isometry.  The first example of a Teichm\"uller curve
is the modular curve $\hh/\operatorname{PSL}_2(\zz) \to \M_1$.  Other
examples emerge from the study of square-tiled surfaces and billiards
in polygons \cite{veech:ngon}.  In genus two, each discriminant $D$ of
a real quadratic order determines a Weierstrass curve $W_D \to \M_2$
(defined below) which is a disjoint union of finitely many
Teichm\"uller curves \cite{mcmullen:billiards} (see also
\cite{calta:periodicity}).  The curve $W_D$ is related to billiards in
the $L$-shaped polygon described in Figure \ref{fig:LTable} and
Weierstrass curves are the main source of Teichm\"uller curves in
$\M_2$ \cite{mcmullen:torsion}.

Few explicit algebraic models of Teichm\"uller curves have appeared in
the literature.  The current list of examples
\cite{bouwmoeller:nonarithmetic,bouwmoeller:triangleveechgps,lochak:arithmetic}
consists of curves of genus zero and hyperbolic volume at most $3 \pi$
and is produced by a variety of ingenious methods which will likely be
difficult to extend.  In this paper, we describe a general method to determine algebraic models for the Weierstrass curves in genus two.  
We use our method to describe $W_D$ for each of the thirty fundamental discriminants $D < 100$.  These examples include a Teichm\"uller curve of genus eight and hyperbolic volume $60\pi$.  Our methods are based on the study of certain Hilbert modular forms and a technique for identifying eigenforms for real multiplication on genus two Jacobians based on Ahlfors's variational formula.
We expect that the resulting method to explicitly describe the action of real multiplication on the one-forms of a Riemann surface will be much more broadly applicable. In particular, it should be useful for studying real multiplication in higher genera, and the Prym Teichm\"uller curves in $\M_3$ and $\M_4$ \cite{mcmullen:prym}.

A growing body of literature demonstrates that Teichm\"uller curves
are exceptional from a variety of perspectives.  Teichm\"uller curves
have celebrated applications to billiards in polygons and dynamics on
translation surfaces \cite{veech:ngon,veech:billiards} and give
examples of interesting Fuchsian differential operators
\cite{bouwmoeller:nonarithmetic}.  The variations of Hodge structures
associated to Teichm\"uller curves have remarkable properties
\cite{moller:variation,moller:torsion} which suggest that
Teichm\"uller curves are natural relatives of Shimura curves. Each
Teichm\"uller curve, like a Shimura curve, is isometrically immersed
in a Hilbert modular variety equipped with its Kobayashi metric and is
simultaneously defined as an algebraic curve over a number field and
uniformized by a Fuchsian group defined over number field.  These
facts about Teichm\"uller curves have been used by several authors to
explain the lack of examples in higher genus
\cite{bainbridgemoller:deligne,matheuswright:htplanes}.  A secondary
goal of this paper is to present evidence drawn from our examples that
Teichm\"uller curves also admit a rich arithmetic geometry.  We show
in particular that many of our examples have small and orderly primes
of bad reduction and notable divisors supported at their cusps.

\paragraph{Weierstrass curves in Hilbert modular surfaces.}  
For each integer $D > 0$ with $D \equiv 0,1\bmod 4$, let $\ord_D$ be
the quadratic ring of discriminant $D$.  The {\em Hilbert modular
  surface of discriminant $D$} is the complex orbifold $X_D = \hh
\times \hh / \PSL(\ord_D \oplus \ord_D^\vee)$.\footnote{The surface
  $X_D$ is isomorphic to $\hh \times \overline{\hh} / \PSL_2(\ord_D)$
  and is typically denoted $Y_{-}(D)$ in the algebraic geometry
  literature \cite{vdgeer:hms,hirzebruchzagier:intersectionnos}.}
When viewed as an algebraic surface, $X_D$ is a moduli space of
principally polarized abelian surfaces with real multiplication by
$\ord_D$.  The {\em Weierstrass curve of discriminant} $D$ is the
moduli space $W_D$ consisting of pairs $(X,[\omega])$ where: (1) $X$
is a Riemann surface of genus two, (2) $\omega$ is a holomorphic
one-form on $X$ with double zero and (3) the Jacobian $\jac(X)$ admits
real multiplication by $\ord_D$ stabilizing the one-form up to scale
$[\omega]$.  The period mapping sending a Riemann surface to its
Jacobian lifts to an embedding of $W_D$ in $X_D$.

Explicit algebraic models of Hilbert modular surfaces are obtained in
\cite{elkieskumar:hms} by studying elliptic fibrations of K3 surfaces.
\begin{thmnn}[Elkies-Kumar]
For fundamental discriminants $1 < D < 100$, the Hilbert modular
surface $X_D$ is birational to the degree two cover of the
$(r,s)$-plane branched along the curve $\bD(r,s)=0$ where $\bD$ is the
polynomial in Table \ref{tab:bDrs}.
\end{thmnn}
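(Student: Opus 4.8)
The plan is to realize $X_D$ as a moduli space of lattice-polarized K3 surfaces and then to coordinatize that moduli space explicitly via a well-chosen elliptic fibration. The input is the moduli description recorded above: a generic point of $X_D$ is a Jacobian $A=\jac(X)$ carrying real multiplication by $\ord_D$, so that $A$ has Picard number two and transcendental lattice $T_D$ of rank four, whose isometry class is determined by $D$. Attaching to $A$ the K3 surface $Y$ through the Shioda--Inose construction (equivalently, resolving the Kummer surface $\operatorname{Km}(A)$ and using the correspondence of transcendental lattices) produces a K3 surface with $T(Y)\cong T_D$ and N\'eron--S\'everi lattice $L_D$ of rank $18$. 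The period domain for a rank-four lattice of signature $(2,2)$ is $\hh\times\hh$, and the relevant arithmetic group is commensurable with $\PSL_2(\ord_D)$; the global Torelli theorem for K3 surfaces then identifies the moduli of $L_D$-polarized K3 surfaces with $X_D$ up to birational equivalence. Since $20-\rho(Y)=2$, this moduli space is a surface, dimensionally consistent with the $(r,s)$-plane.

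Second, I would search among the elliptic pencils on the generic $L_D$-polarized K3 for one with a section whose reducible fibers realize the root sublattice of $L_D$ as fully as possible. Writing $\mathrm{NS}(Y)=U\oplus W$ with $W$ the rank-sixteen frame lattice, a fibration whose Kodaira fibers and Mordell--Weil structure saturate $W$ will rigidify the Weierstrass model $y^2=x^3+A(t)\,x+B(t)$: imposing the prescribed vanishing orders of $A$, $B$, and the discriminant at the reducible fibers forces $A$ and $B$ to depend on only a few coefficients. After quotienting by the natural scaling and reparametrization symmetries (rescaling $t$, $x$, $y$ and M\"obius changes of the base coordinate $t$), I expect the essential moduli to reduce to exactly two parameters, which I would normalize to be $r$ and $s$.

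Third, the degree two cover should arise from a residual involution $\tau$ on this moduli problem, coming from the symmetry of the real multiplication — the two real embeddings $\ord_D\hookrightarrow\rr$, equivalently the Galois conjugation $\sqrt D\mapsto-\sqrt D$, which swaps the eigenform decomposition and hence acts on the family. Choosing $r,s$ to be $\tau$-invariants, the quotient $X_D/\tau$ is the symmetric Hilbert modular surface, which is rational for the discriminants in range, and so carries $(r,s)$ as coordinates. A further invariant $w$ then satisfies a relation $w^2=\bD(r,s)$, exhibiting $X_D$ as the double cover branched over the fixed locus of $\tau$, i.e. over $\{\bD=0\}$. Concretely, I expect $\bD$ to be the condition describing when the generic member of the two-parameter family acquires the extra symmetry, and to extract it by specializing the Weierstrass model and eliminating its internal coefficients.

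The main obstacle is twofold. First, one must locate the \emph{right} elliptic fibration: among the many pencils permitted by $L_D$ one needs the one whose invariants are small enough to yield a low-degree parametrization and whose natural involution matches $\tau$, and pinning this down is a lattice-enumeration problem guided by the requirement that the final model be a double cover of the plane. Second, even with the fibration fixed, computing $\bD$ demands a large elimination — a Gr\"obner basis or iterated-resultant calculation removing the auxiliary Weierstrass parameters — whose size grows quickly with $D$; controlling this computation, and separately certifying that the resulting rational map onto $X_D$ really has degree one onto the quotient (for instance by matching Euler characteristics or cusp and elliptic-point data against the known Hirzebruch--Zagier description of $X_D$), is where the genuine effort lies.
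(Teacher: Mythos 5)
This statement is not proved in the paper at all: it is imported verbatim from Elkies--Kumar \cite{elkieskumar:hms}, and the present paper only describes its provenance in one line (``obtained \dots by studying elliptic fibrations of K3 surfaces''). Your outline --- passing from abelian surfaces with real multiplication by $\ord_D$ to Shioda--Inose/lattice-polarized K3 surfaces, rigidifying via an elliptic fibration whose fibers saturate the frame lattice, and exhibiting the $(r,s)$-plane as the quotient by the involution exchanging the two real places, with $\bD=0$ as branch locus --- is a faithful reconstruction of the strategy actually carried out in that reference (the paper's own footnote confirms that the deck transformation of $X_D\to$ the $(r,s)$-plane is the swap $(\tau_1,\tau_2)\mapsto(\tau_2,\tau_1)$). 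So there is no in-paper proof to compare against; your proposal is consistent with the cited source's method, and the remaining work you identify (choosing the right fibration, the elimination computation, and certifying birationality against the Hirzebruch--Zagier invariants) is exactly where the content of \cite{elkieskumar:hms} lies.
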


\begin{figure}
  \begin{center}
   \includegraphics[scale=0.6]{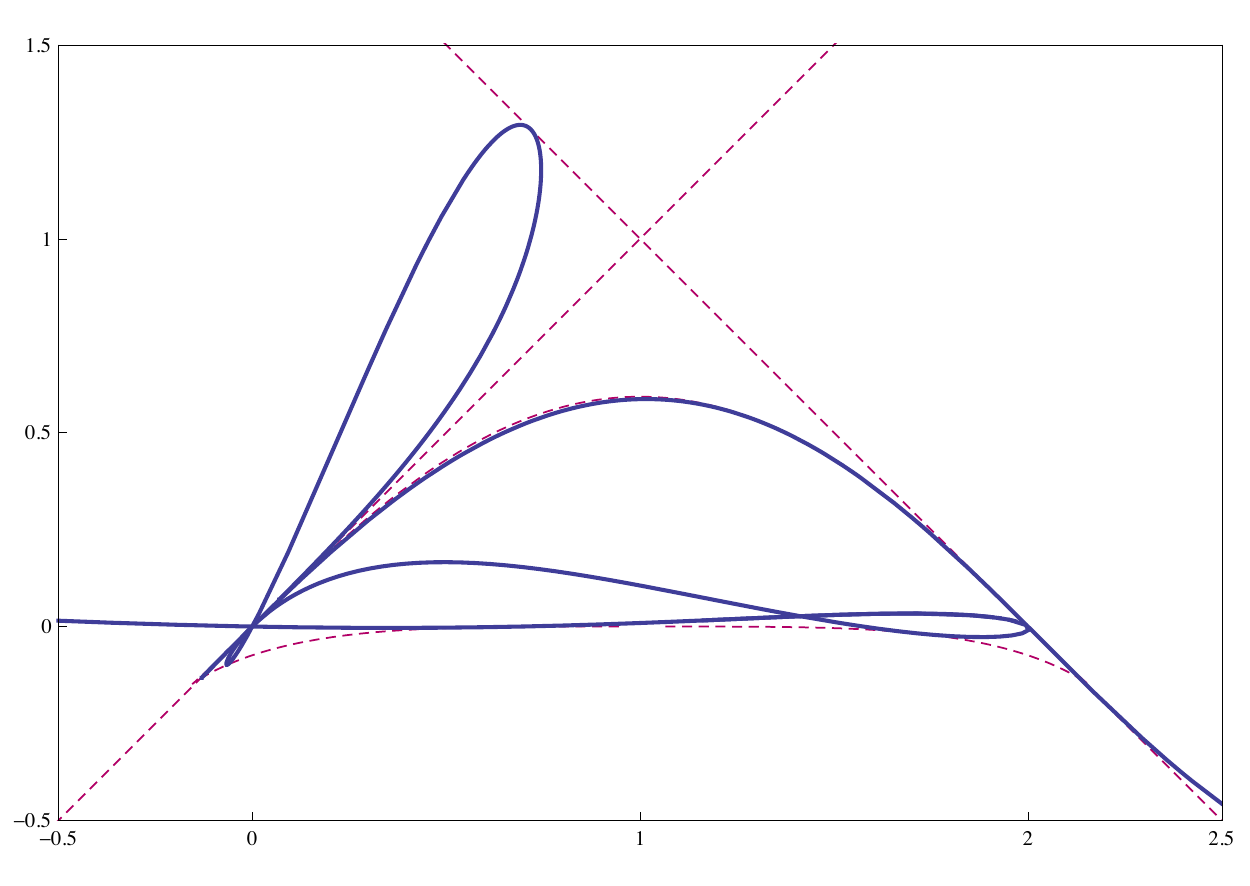}
\end{center}
\sfcaption{\label{fig:BW44} The Hilbert modular surface $X_{44}$ is
  birational to the degree two cover of the $(r,s)$-plane branched
  along the curve $b_{44}(r,s) =0$ (dashed).  The Weierstrass curve
  $W_{44}$ is birational to the curve $w_{44}(r,s)=0$ (solid).}
\end{figure}

The techniques used in \cite{elkieskumar:hms} also provide an algebraic description of the image of the rational map $X_D \to \M_2$.  However, these techniques do not readily adapt to describe the action of $\ord_D$ on the Jacobians in $X_D$.  
We address this challenge by developing a method for eigenform location, which
computes the action of $\ord_D$ on the space of holomorphic
one-forms on a Riemann surface whose Jacobian lies on $X_D$. We use
our method, which we describe briefly at the end of this section and more extensively in \S \ref{sec:eformverify}, to identify the locus corresponding to $W_D$ in the
model for $X_D$ above.
\begin{thm}
\label{thm:wDrs}
For fundamental discriminants $1 < D < 100$, the Weierstrass curve $W_D$ is birational to the curve
$\wD(r,s) = 0$ where $\wD$ is the polynomial in Table \ref{tab:wDrs}.
\end{thm}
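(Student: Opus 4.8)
The plan is to locate $W_D$ inside the Elkies--Kumar double cover of the $(r,s)$-plane by making the real-multiplication eigenform explicit as an algebraic function of $(r,s)$ and then imposing that this eigenform acquire a double zero. Since a generic point of $X_D$ parametrizes a Jacobian $\jac(X)$ with $\ord_D \hookrightarrow \End(\jac(X))$, I would first build the universal genus two curve over a dense open of $X_D$. Concretely, the Igusa--Clebsch invariants are Siegel modular forms, and pulling them back along the modular embedding $X_D \to \M_2$ expresses them as Hilbert modular forms and hence as rational functions of $(r,s)$ on the cover; feeding these into Mestre's reconstruction yields a Weierstrass equation $y^2 = f(x)$ for $X$ with coefficients in the function field of $X_D$.

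The heart of the argument is to determine which one-form $\omega = (ax+b)\,dx/y$ spans the $\ord_D$-eigenline whose double zero defines $W_D$. The generator of $\ord_D$ acts on the two-dimensional space $H^0(X,\Omega^1)$ with two real eigenvalues, and I would identify its matrix --- and hence the eigendirection $[a:b]$ --- using Rauch's (Ahlfors's) variational formula. That formula expresses the derivative of the period matrix $\tau$ along $X_D$ as period integrals of the products $\omega_i\omega_j$ against the deformation of complex structure, and the constraint that $\tau$ satisfy the quadratic Humbert relation cutting out the image of $X_D$ in the Siegel upper half space forces the self-adjoint endomorphism, thereby pinning down $[a:b]$ as an algebraic function of $(r,s)$.

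With the eigenform in hand, the double-zero condition becomes elementary: on $y^2=f(x)$ the form $(ax+b)\,dx/y$ has a double zero exactly when its zero lies at a Weierstrass point, i.e. when $-b/a$ is a root of $f$, which is the single polynomial equation $f(-b/a)=0$ (equivalently the vanishing of the corresponding resultant). Substituting the expressions from the first two steps gives one relation in $(r,s)$; I would then strip off the extraneous factors --- the reducible (product) locus and any other curves in $X_D$ already known by their own equations --- to isolate $\wD(r,s)$. Finally I would confirm the birational identification $W_D \to \{\wD=0\}$ by checking that a generic plane point recovers a unique pair $(X,[\omega])$, so that the covering involution acts trivially along this locus, and invoke McMullen's component count to know that for $D\not\equiv 1\bmod 8$ the curve $W_D$ is irreducible, so that a single equation suffices; a numerical cross-check against the known cusps and Euler characteristic of $W_D$ validates the result.

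The main obstacle is the second step --- converting the transcendental eigenform condition into an honest algebraic function of $(r,s)$. The variational formula produces period integrals rather than algebra, and the difficulty is to normalize the $\ord_D$-action consistently and propagate it uniformly over the whole family, so that the eigendirection $[a:b]$ emerges as a rational (or at worst quadratic) function of the coordinates instead of a merely pointwise-defined transcendental datum.
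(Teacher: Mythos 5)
Your skeleton coincides with the paper's: use Ahlfors's variational formula to make the real-multiplication eigenform algebraic, impose the double-zero condition, and invoke McMullen's irreducibility of $W_D$ for $D \not\equiv 1 \bmod 8$ so that a single irreducible equation suffices. But the step you yourself flag as ``the main obstacle'' --- turning the transcendental eigenform condition into algebra over a function field --- is precisely the content the proof cannot do without, and you leave it unresolved. The paper fills it with two concrete pieces. First, in a symplectic eigenbasis for $\rho$ the period matrix of any $Y$ with real multiplication is diagonal (Proposition \ref{prop:diagperRM}), so the off-diagonal period $P_{12}$ vanishes identically along any family in $\teich(S,\rho)$; Ahlfors gives $dP_{12}=\omega_1\omega_2$, so the \emph{product} of the two eigenforms, as a quadratic differential, annihilates the image of the derivative of the family (Proposition \ref{prop:eformproducts}). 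This is sharper and more computable than ``the Humbert relation forces the self-adjoint endomorphism'': it is a linear condition singling out a line in the three-dimensional space $Q(Y)$, and the unordered pair of eigendirections is recovered by factoring the resulting quadratic $x_2w^2+x_1w+x_0$. Second, Theorem \ref{thm:algpairingcoeffs} gives the pairing $Q(Y(a))\times T_aV\to\cc$ as an explicit $3\times 5$ matrix with entries in $\Disc(f_a)^{-1}\cc[a_0,\dots,a_4]$, so ``annihilates the image of the derivative'' becomes a nullspace computation over the relevant function field. Without these two ingredients your step 2 remains a pointwise transcendental statement, and the proof does not close.

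There is also an organizational difference worth recording. You propose a forward derivation over the surface: Mestre reconstruction over the function field of $X_D$, solve for the eigendirection there, then intersect with $f(-b/a)=0$ and saturate away extraneous loci. The paper instead \emph{guesses} the answer by numerical sampling --- both the plane curve $w_D=0$ and an explicit family $a_D:G_D\to V$ over it, normalized so that the eigenform is $dw/z$ with its double zero built in at infinity on a degree-five model --- and then certifies the guess by running the eigenform-location computation over the one-variable function field of $G_D$; the remark following the proof of Theorem \ref{thm:wDrs} states explicitly that computing the universal curve from $IC_D$ and $w_D$, which is your step 1, is not easy in practice (Mestre over $\qq(r,s)$ requires a rational point on a conic, and the eigendirection lives in a further quadratic extension). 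Finally, your birationality check is misstated: the deck transformation of $X_D\to(r,s)$-plane does not act trivially along $W_D$ --- it swaps the two eigenforms and so moves $W_D$ off itself; generic injectivity of $W_D\to\{w_D=0\}$ holds because only one of the two points in each fiber carries an eigenform with a double zero.
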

The first Weierstrass curve of positive genus is the curve $W_{44}$ of
genus one.  The birational model $w_{44}(r,s)=0$ of $W_{44}$ is
depicted in Figure \ref{fig:BW44} along with the curve
$b_{44}(r,s)=0$.  Our proof of Theorem \ref{thm:wDrs} will yield an explicit birational model of the
universal curve over $W_D$ for fundamental discriminants $1 < D <
100$.

\paragraph{Spin components of Weierstrass curves.}  The curve $W_D$ is irreducible except when $D \equiv 1 \bmod 8$ in which case $W_D = W_D^0 \sqcup
W_D^1$ is a disjoint union of two irreducible components distinguished
by a spin invariant \cite{mcmullen:spin}. For such discriminants, the
components of $W_D$ have Galois conjugate algebraic models defined
over $\qq(\sqrt{D})$ \cite{bouwmoeller:nonarithmetic}.  Our next theorem distinguishes
the components of reducible $W_D$ in the models given in Theorem \ref{thm:wDrs}.
\begin{thm}
\label{thm:wDers}
For fundamental discriminants $1 < D < 100$ with $D \equiv 1 \bmod 8$,
the curve $W_D^\epsilon$ is birational to the curve $\wD^\epsilon(r,s)
= 0$ where $\wD^0$ is the polynomial in Table \ref{tab:wD0rs} and
$\wD^1$ is the Galois conjugate of $\wD^0$.
\end{thm}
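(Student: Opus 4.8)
The plan is to run the strategy behind Theorem~\ref{thm:wDrs} to produce the defining equation of $W_D$ in the Elkies--Kumar coordinates, and then to analyze the splitting forced by the spin invariant. Recall that the period map embeds $W_D$ in $X_D$, so a point $(X,[\omega]) \in W_D$ is recorded by the pair $(\jac X, \rho)$ consisting of the Jacobian together with its real multiplication $\rho \colon \ord_D \hookrightarrow \End(\jac X)$, and $W_D$ is cut out inside $X_D$ by the condition that the distinguished $\rho$-eigenform $\omega$ has a double zero. Writing $X$ in Weierstrass form $y^2 = f(x)$, the eigenform is $\omega = (a+bx)\,dx/y$ for some $[a:b] \in \pp^1$, and $\operatorname{div}(\omega)$ is the pair of points lying over $x = -a/b$; hence $\omega$ has a double zero exactly when $-a/b$ is a branch point of $X$, i.e.\ when $f(-a/b)=0$. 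The problem therefore reduces to (i) expressing the genus two curve attached to a point of $X_D$ in the coordinates $(r,s)$ and (ii) identifying the eigenform direction $[a:b]$ as a function of these coordinates.

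For step~(ii) I would use Ahlfors's variational formula to compute the first-order variation of the period matrix of $\jac X$ along $X_D$ and match it against the real multiplication action; this singles out which one-form is the $\ord_D$-eigenform and yields $[a:b]$ algebraically. Because selecting the distinguished eigenform requires fixing an embedding $\qq(\sqrt D) \hookrightarrow \rr$, the direction $[a:b]$, and hence the equation obtained by substituting into $f(-a/b)=0$ and clearing denominators, is naturally defined over $\qq(\sqrt D)$. Call this equation $\wD^0(r,s)=0$ and let $\wD^1 = 0$ be its Galois conjugate, coming from the opposite embedding. For discriminants $D \not\equiv 1 \bmod 8$ these two conjugate conditions cut out the same curve and the equation descends to $\qq$, as in Theorem~\ref{thm:wDrs}; the new point here is that for $D \equiv 1 \bmod 8$ they define distinct Galois-conjugate curves, consistent with the splitting $W_D = W_D^0 \sqcup W_D^1$ of \cite{mcmullen:spin} and the fact \cite{bouwmoeller:nonarithmetic} that the components are conjugate over $\qq(\sqrt D)$.

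Having produced $\wD^0$, I would check that each $\{\wD^\epsilon = 0\}$ is irreducible and that the projection $W_D^\epsilon \to \{\wD^\epsilon = 0\}$ is birational rather than two-to-one: a generic point of the image has a single preimage in $W_D^\epsilon$ because requiring the double zero to occur on the distinguished eigenform, not its conjugate, selects one of the two sheets of $X_D$ over the $(r,s)$-plane. Birationality can be confirmed by a degree and genus count against the known Euler characteristic of $W_D^\epsilon$. What remains, and what I expect to be the main obstacle, is the labeling: since $\wD^0$ and $\wD^1$ are Galois conjugate they share every field-independent invariant (genus, number and widths of cusps, intersection with $\bD = 0$), so one cannot tell from such data which factor is the spin-$0$ component. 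To orient the two components I would locate an explicit cusp on $\{\wD^0 = 0\}$, reconstruct the associated $L$-shaped polygon (equivalently McMullen's prototype) at that cusp, and compute the $\zz/2$ spin invariant directly from its combinatorics; confirming the value $\epsilon = 0$ fixes the correspondence, and the statement for $\wD^1$ then follows by applying $\sqrt D \mapsto -\sqrt D$. The bulk of the remaining effort is the explicit eigenform computation and the organization of the resulting factorizations across the relevant discriminants, but the conceptual crux is this cusp/spin identification.
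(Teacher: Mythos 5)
Your proposal follows essentially the same route as the paper: certify $\{w_D^0=0\}$ as birational to \emph{a} component of $W_D$ via the eigenform-location machinery of Theorem \ref{thm:wDrs} run over $\qq(\sqrt{D})$, obtain the other component by applying $\sqrt{D}\mapsto-\sqrt{D}$, and fix the spin labeling by matching an explicit cusp against McMullen's splitting prototypes and evaluating the spin formula. The one point to watch --- which the paper flags in a remark --- is that the stable limit at a cusp does not always determine the prototype uniquely (for instance the coordinate $a$ of the prototype does not affect the limiting Igusa--Clebsch invariants), so rather than ``reconstructing the prototype'' at your chosen cusp you must enumerate \emph{all} prototypes compatible with that stable limit and check that they share the same spin.
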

\begin{figure}
  \begin{center}
    \includegraphics[scale=0.25]{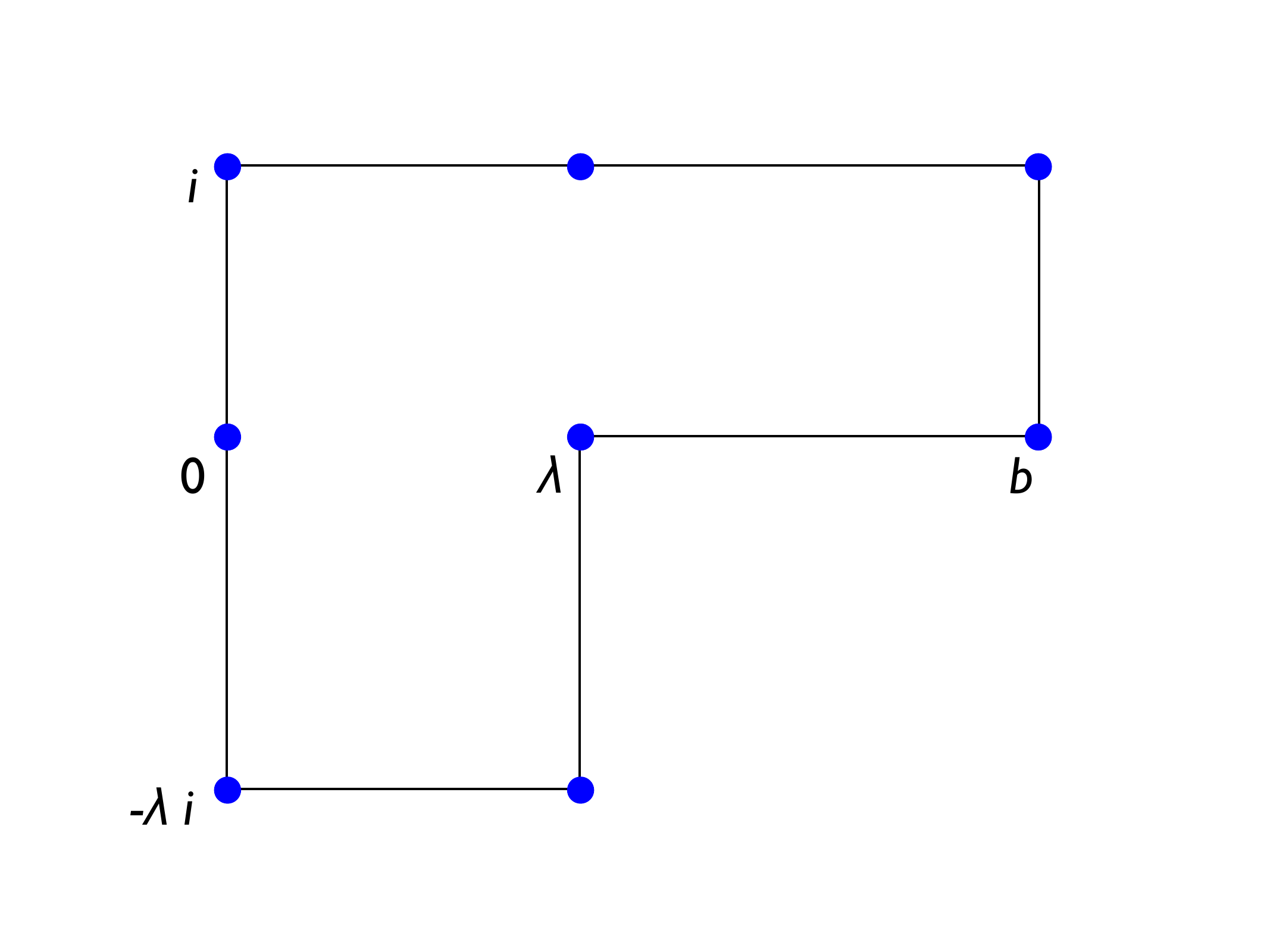}
\end{center}
\sfcaption{\label{fig:LTable} {\sf The Weierstrass curve $W_D$ emerges
    from the study of billiards in an $L$-shaped polygon obtained
    from a $\lambda$-by-$\lambda$ square and a $b$-by-$1$ rectangle
    where $\lambda = (e+\sqrt{D})/2$, $b =(D-e^2)/4$ and $e = 0$ or
    $-1$ with $e \equiv D \bmod 2$.}}
\end{figure}

\paragraph{Rational, hyperelliptic and plane quartic models.}  
The polynomials $\wD$ listed in Table \ref{tab:wDrs} are complicated
in part because they reflect how $W_D$ is embedded in $X_D$.  The
homeomorphism type of $W_D$ is determined in
\cite{bainbridge:eulerchar,mcmullen:spin,mukamel:orbpts} and in Table
\ref{tab:homeotype} we list the homeomorphism type of $W_D$ for the
discriminants considered in this paper.  For fundamental discriminants
$D \leq 73$ with $D \neq 69$, the irreducible components of $W_D$ have
genus at most three and algebraic models simpler than those given by
Theorems \ref{thm:wDrs} and \ref{thm:wDers}.

For discriminants $D \leq 41$, each irreducible component of $W_D$ has
genus zero.  Our proof of Theorems \ref{thm:wDrs} and \ref{thm:wDers}
will give rational parametrizations of the irreducible components of
$\wD(r,s)=0$ for such $D$ and yield our next result.
\begin{thm}
\label{thm:rationalwd}
For fundamental discriminants $D \leq 41$, each component of $W_D$ is
birational to $\mathbb P^1$ over $\qq(\sqrt{D})$.  For $D \leq 41$
with $D \not \equiv 1 \bmod 8$ and $D \neq 21$, the curve $W_D$ is
also birational to $\mathbb P^1$ over $\qq$.  The curve $W_{21}$ has
no rational points and is birational over $\qq$ to the conic
$g_{21}(x,y) = 0$ where:
\[ g_{21}(x,y) = 21 \left(11 x^2-182 x-229\right)+y^2.\]
\end{thm}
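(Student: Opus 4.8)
The plan is to reduce everything to the classical dichotomy for genus zero curves: a smooth genus zero curve $C$ over a field $k$ is birational to $\pp^1$ over $k$ if and only if $C$ carries a $k$-rational point, in which case the pencil of lines through that point furnishes an explicit parametrization. First I would record, from the homeomorphism types collected in Table \ref{tab:homeotype} (due to \cite{bainbridge:eulerchar,mcmullen:spin,mukamel:orbpts}), that for fundamental $D \leq 41$ every irreducible component of $W_D$ has genus zero. By Theorem \ref{thm:wDrs} the model $\wD(r,s)=0$ is defined and irreducible over $\qq$ when $D \not\equiv 1 \bmod 8$, while by Theorem \ref{thm:wDers} the two components $W_D^\epsilon$ for $D \equiv 1 \bmod 8$ are Galois conjugate and individually defined over $\qq(\sqrt D)$. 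Thus each component is a smooth conic over its field of definition, and the whole question becomes one of producing rational points.

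For the $\qq(\sqrt D)$ statement I would exhibit a rational point on each component directly in the $(r,s)$-plane. The cusps of $W_D$ are the natural source: they parametrize explicit polygonal degenerations whose $(r,s)$-coordinates can be read off, and on every component at least one cusp is defined over $\qq(\sqrt D)$. Projection from such a cusp then produces the desired parametrization; in fact these parametrizations fall out of the same computation that proves Theorems \ref{thm:wDrs} and \ref{thm:wDers}. When $D \not\equiv 1 \bmod 8$ and $D \neq 21$, the curve $W_D$ is irreducible over $\qq$, so it suffices to locate a single $\qq$-rational point, and one checks that among the cusps (and the points of the model lying over the line at infinity) there is one fixed by $\mathrm{Gal}(\overline \qq/\qq)$; projecting from it gives a parametrization over $\qq$.

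The genuinely arithmetic case, and the main obstacle, is $D = 21$. Here $21 \equiv 5 \bmod 8$, so $W_{21}$ is irreducible of genus zero over $\qq$, hence a smooth conic, and an explicit birational change of variables carries $w_{21}(r,s) = 0$ to $g_{21}(x,y)=0$. The point is to prove this conic has no $\qq$-rational point, so that $W_{21}$ is genuinely non-rational over $\qq$ rather than a disguised copy of $\pp^1$. By the Hasse--Minkowski theorem a conic over $\qq$ has a rational point if and only if it is everywhere locally solvable, so it is enough to find one place of failure. Since $g_{21}$ is indefinite over $\rr$, the obstruction is $p$-adic; analyzing the $7$-adic valuations forces any solution to have $x \in \zz_7$ and $7 \mid y$, and after dividing by $7$ the equation reduces modulo $7$ to $5x^2 \equiv 1$, i.e. $x^2 \equiv 3 \pmod 7$, which is insolvable because $3$ is not a quadratic residue mod $7$. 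Hence $g_{21}=0$ has no $\qq_7$-point, and a fortiori no $\qq$-point.

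The delicate parts of the argument are the explicit determination of the cusp coordinates in each $(r,s)$-model and the careful tracking of fields of definition under the birational identifications of Theorems \ref{thm:wDrs} and \ref{thm:wDers}; the only place where a true Diophantine obstruction rather than a routine parametrization must be verified is the $7$-adic computation for $D = 21$.
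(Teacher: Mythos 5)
Your overall route is genuinely different from the paper's. The paper does not derive rationality from the plane models $\wD(r,s)=0$ after the fact: the certification in Sections \ref{sec:wcurvecertify}--\ref{sec:cuspspin} takes the parametrizing curve $G_D$ to \emph{be} $\pp^1$ over $\qq$ (resp.\ $\qq(\sqrt D)$, resp.\ the conic $g_{21}=0$) from the outset, and ELA certifies that the explicit map $t \mapsto (Y(a_D(t)),[dw/z])$ is birational onto $W_D$; Theorem \ref{thm:rationalwd} is then read off from the fields of definition of $a_D$ and $b_D$ in {\sf certD.magma}. Your $D=21$ computation is correct and is a legitimate alternative to the paper's: the Hilbert symbol obstruction is nontrivial at both $3$ and $7$, the paper uses reduction mod $3$, and your $7$-adic analysis (forcing $x\in\zz_7$, $7\mid y$, and then $x^2\equiv 3 \bmod 7$) does rule out a $\qq_7$-point. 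You should, however, be explicit that identifying $w_{21}(r,s)=0$ with the conic $g_{21}(x,y)=0$ over $\qq$ is itself a nontrivial computation (in the paper it is part of the construction of $G_{21}$, not a consequence of it).

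The genuine gap is in your mechanism for producing rational points on the remaining curves. You propose to find, on each $W_D$ with $D\not\equiv 1\bmod 8$, $D\neq 21$, a cusp fixed by $\mathrm{Gal}(\overline\qq/\qq)$ and project from it. This fails for $D=37$: by Table \ref{tab:homeotype} the curve $W_{37}$ has exactly $9$ cusps, and by Table \ref{tab:cusppolys} the cuspidal polynomial $c_{37}(t)=(t^2-2368)(t^2-1332)(t^2+74t+1221)(t^3+51t^2-2220t-114108)$ already has degree $9$, so no cusp lies at $t=\infty$ and none of the listed factors has a rational root; there is no $\qq$-rational cusp at all. Since the whole content of the theorem is that a genus-zero curve over $\qq$ need not have a rational point (witness $W_{21}$), you cannot wave this away by ``points over the line at infinity'': you must exhibit a $\qq$-point on $W_{37}$ by some other means. (Your argument could be repaired here --- the irreducible cubic factor gives a degree-$3$ point on a conic, which forces a $\qq$-point by Springer's theorem --- but that is not what you wrote, and it would still leave you checking, component by component for $D=17,33,41$, that a cusp defined over $\qq(\sqrt D)$ actually exists on each spin component.) The paper avoids all of this because the rational parametrizations are constructed, not deduced.
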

The curve $W_{44}$ of genus one and the curves $W_{53}$ and $W_{61}$
of genus two are hyperelliptic and the curves $W_{56}$ and $W_{60}$ of
genus three are canonically embedded as smooth quartics in $\mathbb
P^2$.  Our next theorem identifies hyperelliptic and plane quartic
models of these curves.
\begin{thm}
\label{thm:gDxy}
For $D \in \left\{ 44, 53, 56, 60, 61 \right\}$, the curve $W_D$ is
birational to $g_D(x,y) = 0$ where $g_D$ is the polynomial listed in
Table \ref{tab:wpqmodels}.
\end{thm}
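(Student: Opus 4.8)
The plan is to start from the plane models $\wD(r,s)=0$ furnished by Theorem \ref{thm:wDrs}, which present $W_D$ as a (singular) affine plane curve, and to produce the simpler models $g_D$ via the canonical map. Since all five discriminants satisfy $D\not\equiv 1\bmod 8$, each $W_D$ is irreducible and birational to $\wD(r,s)=0$. Consulting Table \ref{tab:homeotype}, the genera are $1,2,2,3,3$ for $D=44,53,56,60,61$ respectively, with $W_{56}$ and $W_{60}$ non-hyperelliptic; these facts dictate which normal form to target in each case.

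First I would compute a basis of the holomorphic differentials on the smooth model of $\wD(r,s)=0$. For a plane curve $\wD=0$ of degree $d$, the holomorphic differentials have the form $P(r,s)\,dr/(\partial\wD/\partial s)$, where $P$ ranges over the adjoint polynomials of degree $d-3$, namely those vanishing to the appropriate order at each singular point. This requires a complete census of the singularities of $\wD=0$---affine points, points at infinity, and points defined only over extensions of $\qq$---together with their delta-invariants, and the resulting count of independent adjoints must reproduce the genus read off from Table \ref{tab:homeotype}.

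With a basis $(\omega_0,\dots,\omega_{g-1})$ in hand I would build the models case by case. For the genus-two curves $W_{53}$ and $W_{61}$, the canonical map $[\omega_0:\omega_1]$ is precisely the hyperelliptic double cover of $\pp^1$; setting $x=\omega_0/\omega_1$ and solving for a second generator $y$ of the function field with $y^2\in\qq(x)$ yields a model $y^2=f(x)$, which I would then normalize to the form $g_D(x,y)=0$ recorded in Table \ref{tab:wpqmodels}. For the genus-three curves $W_{56}$ and $W_{60}$, the canonical map $[\omega_0:\omega_1:\omega_2]$ embeds $W_D$ as a smooth plane quartic, whose defining quartic $g_D$ is the unique relation among the degree-four monomials in the $\omega_i$, found by linear algebra. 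The genus-one curve $W_{44}$ admits no canonical embedding, so here I would instead exhibit a degree-two pencil---equivalently a rational point together with the hyperelliptic involution---to write a quartic model $y^2=f(x)$.

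The main obstacle is the singularity analysis underlying the adjoint computation: the polynomials $\wD$ are large, so their plane curves acquire many singular points, some at infinity and some conjugate over $\qq(\sqrt{D})$, and an incomplete or mis-weighted accounting of the adjoint conditions would yield the wrong space of differentials and hence a wrong or degenerate model. Once the differentials are correct, the passage to $g_D$ is a finite Riemann--Roch computation. I would verify the final models independently by exhibiting the explicit birational transformation between $\wD(r,s)=0$ and $g_D(x,y)=0$ and confirming it by substitution, and by matching intrinsic invariants---the $j$-invariant for $W_{44}$, the Igusa--Clebsch invariants for $W_{53}$ and $W_{61}$, and the Dixmier--Ohno invariants for $W_{56}$ and $W_{60}$.
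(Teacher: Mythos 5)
Your proposal is correct in outline but takes a genuinely different route from the paper. The paper does not derive $g_D$ from $w_D$ at all: in its proof of Theorem \ref{thm:wDrs}, the curve $G_D$ over which the whole Eigenform Location Algorithm is run is \emph{already} the curve $g_D(x,y)=0$, equipped with a map $a_D: G_D \to V$ parametrizing the universal family and a map $b_D: G_D \to H_D$ birational onto $w_D(r,s)=0$; the certification then establishes simultaneously that $G_D$, $\{w_D=0\}$ and $W_D$ are all birational, so Theorem \ref{thm:gDxy} is a one-line corollary. You instead take Theorem \ref{thm:wDrs} as a black box and simplify the plane model $w_D(r,s)=0$ a posteriori: adjoint differentials on the singular plane curve, the canonical map to get hyperelliptic models in genus two and plane quartics in genus three, a degree-two pencil for the genus-one curve $W_{44}$, and a final verification by an explicit birational substitution. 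This is a legitimate and complete strategy (your closing verification step, exhibiting the explicit birational map and checking it by substitution, is what actually proves the theorem; matching invariants alone would only give isomorphism over $\overline{\qq}$), and it has the virtue of being logically independent of the internals of the certification. What it costs is the heavy singularity census on the very large polynomials $w_D$ --- exactly the computation the paper's design avoids by choosing the nice model as the base of the family from the outset --- and it does not by itself produce the universal curve over $G_D$ in the new coordinates, which the paper gets for free and uses later (e.g.\ to locate cusps via $I_2^5/I_{10}$). One small slip: your opening list of genera $1,2,2,3,3$ for $D=44,53,56,60,61$ is misordered (Table \ref{tab:homeotype} gives $1,2,3,3,2$), but your subsequent case analysis treats $W_{53},W_{61}$ as genus two and $W_{56},W_{60}$ as genus three, which is correct.
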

The irreducible components of $W_{57}$, $W_{65}$ and $W_{73}$ have
genus one.  We also identify hyperelliptic models of these curves.
\begin{thm}
\label{thm:gDexy}
For $D \in \left\{ 57, 65, 73 \right\}$, the curve $W_D^\epsilon$ is
birational to $g_D^\epsilon(x,y) = 0$ where $g_D^0$ is the polynomial
listed in Table \ref{tab:wpqmodels} and $g_D^1$ is the Galois
conjugate of $g_D^0$.
\end{thm}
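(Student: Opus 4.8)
The plan is to start from the models $\wD^\epsilon(r,s) = 0$ produced by Theorem~\ref{thm:wDers} and to reduce them, by an explicit birational change of coordinates, to the clean hyperelliptic form $g_D^\epsilon(x,y) = 0$ recorded in Table~\ref{tab:wpqmodels}. Because $W_D^1$ is the Galois conjugate of $W_D^0$ over $\qq(\sqrt D)$, it suffices to treat $\epsilon = 0$: applying the nontrivial element of $\operatorname{Gal}(\qq(\sqrt D)/\qq)$ transports both the model and the birational map to $W_D^1$, so that the resulting $g_D^1$ is automatically the Galois conjugate of $g_D^0$. I therefore fix $D \in \{57, 65, 73\}$ and work with the single plane curve $C : \wD^0(r,s) = 0$.

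Since a genus one curve carries a degree two map to $\pp^1$ for every divisor class of degree two, the content of the theorem is not the existence of a hyperelliptic structure but the production of an explicit clean model, and the work is to locate a convenient one. First I would confirm, using the homeomorphism type recorded in Table~\ref{tab:homeotype}, that $C$ has geometric genus one, and then analyze the singular locus of $\wD^0(r,s)=0$: the gap between the arithmetic genus $(d-1)(d-2)/2$ of a smooth plane curve of degree $d = \deg \wD^0$ and the geometric genus one must be accounted for by the $\delta$-invariants of the singularities, which typically sit at the cusps and elliptic points whose $(r,s)$-coordinates are already available from the proof of Theorem~\ref{thm:wDers}. I would then search among low-degree rational functions of $r$ and $s$ --- for instance the function cut out by projection from a singular point of suitable multiplicity --- for a uniformizer $x$ with $[\qq(\sqrt D)(C):\qq(\sqrt D)(x)] = 2$. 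Completing the square in the resulting quadratic extension $\qq(\sqrt D)(x)[y]/(y^2 - f(x))$ and rescaling $x$ and $y$ would normalize $f$ to the quartic appearing in $g_D^0$; I would record the map $(r,s) \mapsto (x,y)$ together with its inverse and verify birationality by composing them on function fields.

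The main obstacle I anticipate is computational rather than conceptual. The polynomials $\wD^0$ in Table~\ref{tab:wD0rs} are unwieldy, so both the determination of the singular points over $\qq(\sqrt D)$ and the simplification of $f(x)$ to a small quartic will demand careful symbolic computation and a judicious choice of uniformizer to keep the coefficients of $g_D^0$ defined over the correct field and as small as possible. The point at which I would take the most care is the final verification that the constructed map has degree exactly one --- so that $g_D^0$ is genuinely birational to $W_D^0$ rather than merely dominated by it --- which I would settle by checking that the function field extension it induces is trivial.
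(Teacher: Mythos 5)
Your argument is sound in outline and would establish the theorem, but it runs in the opposite direction from the paper's proof, which is essentially a one-line corollary of the proof of Theorem~\ref{thm:wDers}. There, the certification is carried out not on the plane curve $\wD^0(r,s)=0$ but on an explicit auxiliary curve $G_D^0$ together with rational maps $a_D^0 : G_D^0 \to V$ and $b_D^0 : G_D^0 \to H_D$, with $b_D^0$ birational onto $\{\wD^0 = 0\}$; the Eigenform Location Algorithm is run over the function field of $G_D^0$. For $D \in \{57,65,73\}$ the curve $G_D^0$ \emph{is} the Weierstrass cubic $g_D^0(x,y)=0$ of Table~\ref{tab:wpqmodels}, so the hyperelliptic model is the parametrizing object from the start and the theorem follows with no further work; the explicit birational correspondence between the two plane models that you propose to construct is already part of the certification data, just written as a map from $\{g_D^0=0\}$ to $\{\wD^0=0\}$ rather than the reverse. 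Your route --- take Theorem~\ref{thm:wDers} as a black box, resolve the singularities of the unwieldy polynomial $\wD^0$ from Table~\ref{tab:wD0rs}, extract a degree-two function on the genus-one normalization, and normalize --- is legitimate and has the virtue of not depending on the auxiliary files, but it is computationally much heavier and, as you note, must end with a genuine verification that the constructed map is degree one and that the resulting elliptic curve coincides with $g_D^0$ (knowing the genus is one does not determine the curve). Two small points: the models $g_D^0$ for these discriminants are Weierstrass cubics rather than quartics, so after finding the degree-two map you also need a $\qq(\sqrt D)$-rational branch point or cusp to move to infinity (the cusp at $(x,y)=(\infty,\infty)$ serves this purpose in the paper); and your reduction of the $\epsilon=1$ case to $\epsilon=0$ by Galois conjugation matches the paper exactly.
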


\setlength{\polywidth}{14cm}
\def \polysize {\normalsize}
\begin{table}
\begin{tabular}{c}
\toprule
Hyperelliptic and plane quartic models of Weierstrass curves \\ \als
\midrule \als
\ppoly{g_{44}(x,y) = x^3+x^2+160 x+3188-y^2} \\ \als
\ppoly{g_{53}(x,y) = 7711875 + 3572389 x + 777989 x^2 + 100812 x^3 + 8252 x^4 + 401 x^5 + 9 x^6 - (1 + x^2) y - y^2} \\ \als
\ppoly{g_{56}(x,y) = 35 + 10 x - 20 x^2 - 2 x^3 + x^4 - 43 y + 15 x y + 5 x^2 y - x^3 y + 33 y^2 - x y^2 - 5 x^2 y^2 - 10 y^3 + 4 x y^3 + 4 y^4} \\ \als
\ppoly{g_{57}^0(x,y) =  x^3 + \frac{1}{2}(1+\sqrt{57}) x^2 + (12+\sqrt{57})x + \frac{1}{2} (31-\sqrt{57}) - \frac{1}{2} (1+\sqrt{57}) x y -y^2} \\ \als
\ppoly{g_{60}(x,y) = 4 x^4-8 x^3 y-4 x^3+50 x^2 y^2-2 x^2 y-44 x y^3-56 x y^2+10 x y+228 y^4-32 y^3-8 y^2+y} \\ \als
\ppoly{g_{61}(x,y) = 12717 - 527 x - 6117 x^2 + 1498 x^3 - 604 x^4 - 282 x^5 + 324 x^6 -\left(x^2+x+1\right) y-y^2} \\ \als
\ppoly{g_{65}^0(x,y) = x^3+\left(27 \sqrt{65}-229\right) x^2+\frac{1}{2} \left(11225 \sqrt{65}-90375\right) x-y^2} \\ \als
\ppoly{g_{73}^0(x,y) =  x^3 + \frac{1}{2} (1+\sqrt{73}) x^2 - \frac{1}{2} (701+83\sqrt{73}) x -52 + 36\sqrt{73}  - \frac{1}{2} (221+ 17\sqrt{73}) y -y^2 }\\ \als
\bottomrule
\end{tabular}
\sfcaption{\label{tab:wpqmodels} For discriminants $44 \leq D \leq 73$
  with $D \neq 69$, each irreducible component of $W_D$ has either a
  hyperelliptic or plane quartic model defined above (cf. Theorems
  \ref{thm:gDxy} and \ref{thm:gDexy}).}
\end{table} 

\paragraph{Arithmetic of Teichm\"uller curves.}  
We hope that the models of Weierstrass curves in Tables
\ref{tab:wpqmodels}, \ref{tab:wDrs} and \ref{tab:wD0rs} will encourage
the study of the arithmetic geometry of Teichm\"uller curves.  To that
end, we now list several striking facts about these examples that give
evidence toward the theme:
\begin{center}
{\em Teichm\"uller curves are arithmetically interesting.} 
\end{center}
We will denote by $\overline{W}_D$ the smooth, projective curve
birational to $W_D$. The curve $\overline{W}_D$ is obtained from $W_D$
by filling in finitely many cusps on $W_D$ (studied in
\cite{mcmullen:spin}) and smoothing finitely many orbifold points
(studied in \cite{mukamel:orbpts}).  Our rational, hyperelliptic and
plane quartic birational models of low genus components of $W_D$
extend to biregular models of components of $\overline{W}_D$.
Throughout what follows, we identify $\overline{W}_D$ with these
biregular models via the parametrizations given in auxiliary computer
files, as described in Section \ref{sec:arithmetic}.

\paragraph{Singular primes.}  
The first indication that the curves $\overline{W}_D$ have interesting
arithmetic is the fact our low, positive genus examples are singular
only at small primes.  Our next two theorems suggest the following.
\begin{center}{\em 
The primes of bad reduction for Teichm\"uller curves \\ have
arithmetic significance.}
\end{center} 
To formulate a precise statement, we define
\begin{equation}
\label{eqn:ND}
N(D) = 2 \cdot D \cdot\prod_{e} \frac{D-e^2}{4} \mbox{ where $e$
  ranges in } \left\{ e : e > 0, e \equiv D \bmod 2 \mbox{ and } e^2<D
\right\}.
\end{equation}
The quantity $N(D)$ bears a striking resemblance to formulas in the
arithmetic of ``singular moduli'' of elliptic curves
\cite{Gross-Zagier}.  The number $N(D)$ is also closely related to the
product locus $P_D \subset X_D$ parametrizing polarized products of
elliptic curves with real multiplication.  The curve $P_D$ is a
disjoint union of modular curves each of whose levels divide $N(D)$
(\cite{mcmullen:spin}, \S 2).  In particular, the primes of bad
reduction for $P_D$ all divide $N(D)$ \cite{Silverman-Elliptic}. For
many of our examples, we find that the same is true of the primes of
bad reduction for $\overline{W}_D$.
\begin{thm}
\label{thm:singularprimes}
For discriminants $D \in \left\{ 21, 44, 53, 56, 60, 61 \right\}$, the
curve $\overline{W}_D$ has bad reduction at the prime $p$ only if $p$
divides $N(D)$.
\end{thm}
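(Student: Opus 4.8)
The plan is to deduce the theorem from an explicit discriminant computation for each of the six curves, using the concrete models supplied by Theorem~\ref{thm:rationalwd} (the conic $g_{21}=0$) and Theorem~\ref{thm:gDxy} (the equations $g_D=0$ of Table~\ref{tab:wpqmodels}). Since $2\mid N(D)$ for every $D$ by \eqref{eqn:ND}, the prime $p=2$ is always permitted to be singular, so it suffices to treat odd primes; this conveniently removes the characteristic-two subtleties in the reduction theory of double covers and of plane curves. Thus for each $D\in\{21,44,53,56,60,61\}$ the goal is to prove that every odd prime $p\nmid N(D)$ is a prime of good reduction for $\overline{W}_D$, which I would establish by producing a smooth proper model over $\zz_{(p)}$.

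First I would attach to each model a discriminant whose prime divisors contain all primes of bad reduction. For the elliptic curve $\overline{W}_{44}$, given in Weierstrass form $y^2=x^3+x^2+160x+3188$, this is the discriminant of the cubic; for the genus-two curves $\overline{W}_{53}$ and $\overline{W}_{61}$ I would complete the square (valid over $\zz[1/2]$, the extra powers of $2$ being harmless) to rewrite $g_D=0$ as $Y^2=F(x)$ with $F$ of degree six and take the discriminant of the associated binary sextic, which vanishes modulo $p$ exactly when the sextic acquires a repeated root in $\mathbb{P}^1$; for the plane quartics $\overline{W}_{56}$ and $\overline{W}_{60}$ I would use the discriminant of the ternary quartic form, the degree-$27$ invariant that vanishes modulo $p$ precisely when the reduced quartic is singular; and for the conic $\overline{W}_{21}$ the governing invariant is the determinant of the Gram matrix of its ternary quadratic form. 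In every case the standard fact is that, away from $2$ and from the primes dividing this discriminant, the integral closure of the given affine model reduces to a smooth curve, so the bad primes of $\overline{W}_D$ lie among the prime divisors of the discriminant.

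The core of the work is then to compute and factor these discriminants and to check against \eqref{eqn:ND}. For $\overline{W}_{44}$ one finds the cubic discriminant equal to $-2^{13}\cdot 5^5\cdot 11$, whose odd part involves only $5$ and $11$, both dividing $N(44)=2^5\cdot 5\cdot 7\cdot 11$; for the conic $\overline{W}_{21}$ one computes the Gram determinant to be $-2^4\cdot 3^5\cdot 5^2\cdot 7^2$, supported on $\{2,3,5,7\}$, which is exactly the set of prime divisors of $N(21)=2\cdot 3^2\cdot 5\cdot 7$. For the remaining four curves I would run the analogous binary-sextic and ternary-quartic discriminant computations in a computer algebra system and factor the output, verifying that each prime divisor divides $N(53)=2\cdot 7\cdot 11\cdot 13\cdot 53$, $N(56)=2^5\cdot 5^2\cdot 7\cdot 13$, $N(60)=2^5\cdot 3^2\cdot 5\cdot 7\cdot 11$, or $N(61)=2\cdot 3^4\cdot 5\cdot 13\cdot 61$ as appropriate.

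The hard part is that the equations in Table~\ref{tab:wpqmodels} need not be minimal at every prime, so a discriminant read off from the given model may carry a spurious factor $q\nmid N(D)$ at which that particular model reduces to a singular curve even though $\overline{W}_D$ itself has good reduction at $q$. Ruling out such primes is the delicate point. For $\overline{W}_{44}$ this is handled by Tate's algorithm, passing to the minimal Weierstrass model before factoring the discriminant, and for the genus-two curves $\overline{W}_{53}$ and $\overline{W}_{61}$ by Liu's theory of minimal discriminants of genus-two curves, which replaces the naive sextic discriminant by the minimal one. For the plane quartics $\overline{W}_{56}$ and $\overline{W}_{60}$, where no comparably clean minimal-model theory is available, I would instead treat any offending $q$ by hand, exhibiting an explicit $\mathrm{GL}_3(\zz_{(q)})$ change of coordinates whose reduction modulo $q$ is a smooth plane quartic and hence certifies good reduction at $q$. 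Once each discriminant is shown, after these local adjustments, to be supported only on primes dividing $N(D)$, the containment of the bad-reduction locus in $\{p:p\mid N(D)\}$ follows for all six curves, proving the theorem.
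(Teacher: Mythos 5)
Your proposal follows essentially the same route as the paper: the paper proves Theorem \ref{thm:singularprimelist} by computing the singular primes of the explicit models of Table \ref{tab:wpqmodels} --- via the discriminant of $h(x)^2-4f(x)$ for the conic and hyperelliptic cases $D\in\{21,44,53,61\}$ and via elimination ideals on the three affine charts for the plane quartics $D\in\{56,60\}$ --- and then observes (Table \ref{tab:singularprimes}) that every such prime divides $N(D)$. Your discriminant computations for $D=21$ and $D=44$ and your factorizations of $N(D)$ agree with the paper's data, and your binary-sextic and ternary-quartic discriminants are interchangeable with the paper's criteria. The one substantive difference is your final paragraph on minimality, which you call ``the hard part'': it is in fact unnecessary for this theorem. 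The statement asserts only the containment $\{\text{bad primes}\}\subseteq\{p : p\mid N(D)\}$, and by the paper's definition a prime of bad reduction is singular for \emph{every} integral model, hence in particular for the given one; so the singular primes of the (possibly non-minimal) models of Table \ref{tab:wpqmodels} already bound the bad primes from above, and since the computation shows these all divide $N(D)$, there is no ``spurious'' prime $q\nmid N(D)$ left to rule out. Tate's algorithm, Liu's minimal discriminants, and ad hoc coordinate changes for the quartics would only be needed to prove the converse (that the listed primes are genuinely bad), which neither the theorem nor the paper claims.
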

For Weierstrass curves birational $\pp^1$ over $\qq$, we give explicit parametrizations of $\overline{W}_D$ by the projective $t$-line in the auxiliary files. We define the
{\em cuspidal polynomial} $c_D(t)$ to be the monic polynomial
vanishing simply at the cusps of $\overline{W}_D$ in the affine
$t$-line and nowhere else, and obtain the following genus zero analogue
of Theorem \ref{thm:singularprimes}.
\begin{thm}
\label{thm:cusppoly}
For $D \leq 41$ with $D \not \equiv 1 \bmod 8$ and $D \neq 21$, the
cuspidal polynomial $c_D(t)$ is in $\zz[t]$ and a prime $p$ divides
the discriminant of $c_D(t)$ only if $p$ divides $N(D)$.
\end{thm}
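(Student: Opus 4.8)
The plan is to compute $c_D(t)$ explicitly from the rational parametrizations underlying Theorem \ref{thm:rationalwd} and then to control its discriminant through the combinatorics of the cusps. Fix one of the discriminants in the statement and let $t$ be the affine coordinate on the source of the birational map $\mathbb{P}^1 \to W_D$ defined over $\qq$. I would first enumerate the cusps of $\overline{W}_D$ using McMullen's description of the boundary of $W_D$ by \emph{prototypes} --- integer quadruples $(a,b,c,e)$ with $D = e^2+4bc$ subject to his normalizing inequalities \cite{mcmullen:spin} --- and trace each prototype through the parametrization to obtain the roots $t_1,\dots,t_n$ of $c_D(t)$. The essential bookkeeping is that the quantity $bc = (D-e^2)/4$ attached to each prototype is one of the factors of $N(D)$, so that the integer $N(D)$ already encodes the arithmetic data of the full cusp set.

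That $c_D(t) \in \qq[t]$ is formal: $\overline{W}_D$ and its set of cusps are defined over $\qq$ by Theorem \ref{thm:rationalwd}, so $\operatorname{Gal}(\overline{\qq}/\qq)$ permutes the $t_i$ and the monic polynomial $c_D$ has rational coefficients. The sharper claim $c_D(t)\in\zz[t]$ says these coefficients have no denominators; since $c_D$ is monic it is equivalent to each $t_i$ being an algebraic integer, which I would verify from the explicit cusp coordinates supplied by the prototype data (equivalently, by checking that the symmetric functions of the $t_i$, which are the coefficients of $c_D$, lie in $\zz$).

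The substance is the factorization of $\Disc(c_D) = \pm\prod_{i<j}(t_i-t_j)^2 \in \zz$, whose prime divisors are precisely the primes $p$ at which two distinct cusps of $\overline{W}_D$ acquire the same reduction. I would argue that any such collision forces $p \mid N(D)$ by way of the product locus $P_D$: at a cusp the genus two surface degenerates and its Jacobian limits to a polarized product of elliptic curves with real multiplication by $\ord_D$, so the separation between distinct cusps is governed by the components of $P_D$, which are modular curves of level dividing $N(D)$ (\cite{mcmullen:spin}, \S 2). Concretely, distinct prototypes present non-isomorphic degenerate surfaces, and this non-isomorphism can only fail modulo $p$ when $p$ divides one of the invariants distinguishing them --- namely $2$, $D$, or some $(D-e^2)/4$ --- which are exactly the primes recorded by $N(D)$. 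As only finitely many $D$ are in play, I would then confirm the bound by computing each $c_D(t)$ from the parametrizations in our auxiliary files and factoring $\Disc(c_D)$.

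The crux is showing that for every prime $p \nmid N(D)$ the cusps remain pairwise distinct modulo $p$; this is equivalent to the discriminant bound and is where the arithmetic of $N(D)$ is genuinely used. For such $p$ each of $2$, $D$ and the factors $(D-e^2)/4$ is a $p$-adic unit, so the prototype invariants that distinguish two cusps cannot vanish modulo $p$, and one must turn this into an actual separation of the reduced cusp coordinates. The generic good-reduction heuristic makes this plausible, but its rigorous form --- ruling out an unexpected coincidence of two reductions that is not already visible at the level of prototype invariants --- is exactly what the explicit factorization of each $\Disc(c_D)$ certifies for the finitely many $D$ at hand. The integrality $c_D\in\zz[t]$ is the milder, parallel task of checking that no cusp coordinate acquires a denominator, which the same explicit coordinates settle.
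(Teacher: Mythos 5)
Your proposal is correct and ultimately coincides with the paper's proof: the paper simply computes each $c_D(t)$ explicitly (Table \ref{tab:cusppolys}, obtained by locating the poles of $I_2^5/I_{10}$ in the $t$-line) and verifies integrality and the factorization of the discriminant by inspection, which is exactly the finite computation you defer to at the end. Your surrounding discussion of $P_D$ and prototype invariants is reasonable motivation --- and you rightly flag that it does not by itself rule out accidental collisions of cusp reductions --- but it is not part of the paper's argument and is not needed once the explicit polynomials are in hand.
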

The primes of singular reduction for our models of low, positive genus
Weierstrass curves are listed in Table \ref{tab:singularprimes} and
the cuspidal polynomials for Weierstrass curves birational to $\pp^1$
over $\qq$ are listed in Table \ref{tab:cusppolys}. Note that it is conceivable that we could get a smaller set of bad primes by choosing a different parametrization.

\paragraph{Divisors supported at cusps.} 
The divisors supported at cusps of $\overline{W}_D$ provide further
evidence that Teichm\"uller curves are arithmetically interesting.
The Fuchsian groups presenting Teichm\"uller curves as hyperbolic
orbifolds are examples of Veech groups.  Our next three theorems
suggest that
\begin{center}
{\em Veech groups have a rich theory of modular forms.}
\end{center}
Veech groups uniformizing Teichm\"uller curves can be computed
by the algorithm described in \cite{mukamel:fundamentaldomains} and a
fundamental domain for the group uniformizing $W_{44}$ is depicted in
Figure \ref{fig:fundamentaldomain}.  For background on Veech groups
see e.g. \cite{masurtabachnikov:billiards,zorich:flatsurfaces}.
\begin{figure}
  \begin{center}
    \includegraphics[scale=0.5]{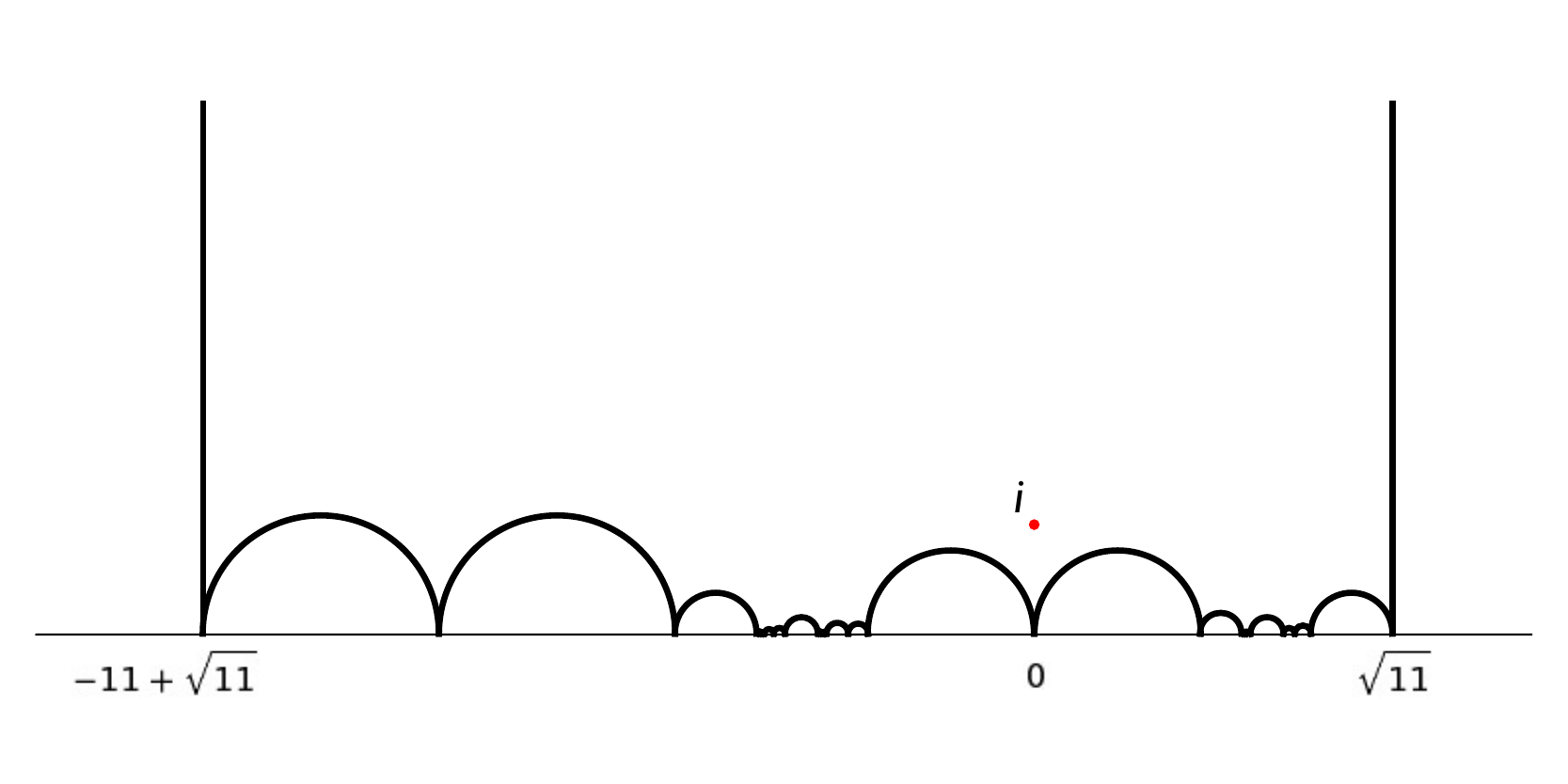}
\end{center}
\sfcaption{\label{fig:fundamentaldomain} {\sf The ideal polygon in
    $\hh$ depicted above is a fundamental domain for the Veech group
    uniformizing $W_{44}$.}}
\end{figure}

By the Manin-Drinfeld theorem
\cite{drinfeld:modularcurves,manin:parabolicpts}, the degree zero
divisors supported at the cusps of the modular curve $X_0(m) =
\overline{\hh / \Gamma_0(m)}$ generate a finite subgroup of the Picard
group $\pic^0(X_0(m))$.  The same is not quite true for divisors
supported at cusps of Weierstrass curves.
\begin{thm}
\label{thm:w44mw}
The subgroup of $\pic^0\left(\overline{W}_{44}\right)$ generated by
divisors supported at the nine cusps of $W_{44}$ is isomorphic to
$\zz^2$.
\end{thm}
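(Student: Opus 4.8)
The plan is to exploit that $\overline{W}_{44}$ has genus one, so that its degree zero Picard group is canonically isomorphic to the curve itself once a base point is fixed. Concretely, by Theorem~\ref{thm:gDxy} the smooth model $\overline{W}_{44}$ is the elliptic curve $E$ with affine equation $y^2 = x^3 + x^2 + 160 x + 3188$; taking the point at infinity as the origin $O$, the Abel--Jacobi map sends a degree zero divisor $\sum_i n_i P_i$ to the point $\bigoplus_i n_i P_i$ formed with the group law on $E$, and this is an isomorphism $\pic^0(\overline{W}_{44}) \xrightarrow{\sim} E$. Under this identification the subgroup generated by the degree zero divisors supported at the nine cusps $P_1,\dots,P_9$ becomes the subgroup $\Lambda \subseteq E$ generated by the differences $P_i \ominus P_1$ for $i = 2,\dots,9$. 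It therefore suffices to prove that $\Lambda$ is free abelian of rank two.

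First I would pin down the nine cusps as explicit points of $E$. The cusps of $W_{44}$ and their fields of definition are described by the prototype classification of \cite{mcmullen:spin}; transporting this data through the explicit birational parametrization recorded in the auxiliary files (Section~\ref{sec:arithmetic}) yields the coordinates $(x_i,y_i)$ of each $P_i$ on the model $E$, defined over a fixed number field $K$. This step is pure bookkeeping, but it must be executed carefully: a single mismatch between a cusp and its image on $E$ would corrupt the entire computation.

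Next I would determine the structure of $\Lambda$ in two halves. For the lower bound $\operatorname{rank}\Lambda \geq 2$, I would choose two of the differences, say $G_1$ and $G_2$, and compute the $2 \times 2$ Gram matrix of the N\'eron--Tate canonical height pairing $\hat{h}(G_a,G_b)$; since $\hat{h}$ is positive definite on $E(K)$ modulo torsion, a nonzero determinant (the regulator of $G_1,G_2$) certifies that $G_1$ and $G_2$ are $\zz$-linearly independent. For the matching upper bound and the precise group structure, I would exhibit explicit integer relations $P_i \ominus P_1 = m_i G_1 + n_i G_2$ for every $i$, the coefficients $(m_i,n_i)$ being guessed from the complex uniformization $E(\cc) \cong \cc/(\zz + \zz\tau)$ via elliptic logarithms and then verified by exact group-law arithmetic over $K$. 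These relations show $\Lambda \subseteq \langle G_1,G_2\rangle$, and since $G_1,G_2 \in \Lambda$ we get $\Lambda = \langle G_1,G_2\rangle$. Because $G_1,G_2$ are independent points of infinite order, the homomorphism $\zz^2 \to E$, $(m,n)\mapsto m G_1 + n G_2$, has trivial kernel, so $\Lambda \cong \zz^2$ as claimed; in particular there is no separate torsion to rule out, since two $\zz$-linearly independent points automatically generate a copy of $\zz^2$.

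The main obstacle is the rank computation, and within it the verification that $\Lambda$ has rank exactly two rather than three or more. The lower bound from the regulator is routine once the heights are computed to provable precision, but the upper bound rests entirely on finding the correct integer relations expressing all nine cusp differences in terms of $G_1$ and $G_2$: producing these candidates requires accurate numerical elliptic logarithms, and the final certification requires exact arithmetic on $E(K)$ so that the conclusion does not depend on floating point. A secondary subtlety is confirming the field of definition $K$ of the cusps, so that the height pairing and the group law are applied over the correct base. The resulting positive rank is precisely what distinguishes this situation from the Manin--Drinfeld theorem for $X_0(m)$, where the analogous cuspidal subgroup is finite.
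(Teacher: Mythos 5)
Your proposal is correct and follows the same overall architecture as the paper: identify $\overline{W}_{44}$ with the elliptic curve $g_{44}(x,y)=0$, note that a cusp sits at the origin so the cuspidal subgroup of $\pic^0$ becomes a subgroup $\Lambda$ of the Mordell--Weil group, locate the nine cusps explicitly, express all of them as integer combinations of two generators by exact group-law arithmetic (this is exactly the content of Table \ref{tab:w44cusps} and Proposition \ref{prop:cuspidalsubgroup44}), and then certify that the two generators are independent. The one place where you genuinely diverge is the independence certificate. You propose the analytic route: compute the $2\times 2$ N\'eron--Tate Gram matrix of $G_1,G_2$ to provable precision and check that the regulator is nonzero. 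The paper instead stays entirely within exact arithmetic: it uses Proposition \ref{prop:w44arithmeticinvariants} (that $\overline{W}_{44}(\qq)\cong\zz$, generated by $(26,160)$) to see that the $\qq$-rational difference $P_1-P_2$ has infinite order, checks that $n\cdot P_2$ is not $\qq$-rational for $n\le 18$, and invokes Kamienny's uniform bound on torsion of elliptic curves over quadratic fields to conclude that $P_1$ and $P_1-P_2$ generate a free rank-two subgroup. Your method is more self-contained and generalizes immediately to cusps over higher-degree fields, at the cost of needing canonical heights with certified error bounds; the paper's method avoids all floating point but leans on two external arithmetic inputs (the Mordell--Weil computation over $\qq$ and Kamienny's theorem) and on the special fact that one cusp difference happens to be $\qq$-rational. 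Either certificate completes the proof.
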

While the cuspidal subgroup of $\overline{W}_{44}$ is not finite, it
is small in the sense that there are (many) principal divisors
supported at cusps.  In other words, there are non-constant regular
(algebraic) maps $W_{44} \to \cc^*$.  Several other Weierstrass curves
also enjoy this property.
\begin{thm}
\label{thm:principaldivisors}
Each of the curves $W_{44}$, $W_{53}$, $W_{57}$, $W_{60}$, $W_{65}$,
and $W_{73}$ admits a non-constant regular (algebraic) map to $\cc^*$.
\end{thm}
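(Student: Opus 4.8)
The plan is to reduce the existence of a non-constant holomorphic map $W_D \to \cc^*$ to the existence of a nonzero principal divisor supported at the cusps of $\overline{W}_D$, and then to exhibit such divisors explicitly using the biregular models provided by Theorems \ref{thm:gDxy} and \ref{thm:gDexy} together with the cusp parametrizations recorded in the auxiliary files. Since $\overline{W}_D$ is obtained from $W_D$ by filling in the cusps and smoothing the orbifold points, a holomorphic map $W_D \to \cc^*$ is the same datum as a meromorphic function $h$ on $\overline{W}_D$ whose zeros and poles all lie among the cusps; conversely, any rational function on $\overline{W}_D$ with cuspidal divisor restricts to such a map, non-constant precisely when its divisor is nonzero. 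Thus it suffices, for each listed $D$, to produce a single nonzero degree-zero divisor supported at the cusps that is principal.

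For $W_{44}$ this is immediate from Theorem \ref{thm:w44mw}. The free abelian group of degree-zero divisors supported at the nine cusps has rank eight, and the natural map to $\pic^0(\overline{W}_{44})$ has image isomorphic to $\zz^2$; hence its kernel has rank six, and every nonzero element of this kernel is a principal divisor supported at cusps. Any such divisor is $(h)$ for a rational function $h$ on $\overline{W}_{44}$, which restricts to the desired non-constant map $W_{44} \to \cc^*$.

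For the remaining discriminants I would argue directly in the explicit model. First, I locate the cusps as explicit points $P_1, \dots, P_k$ (with $k \geq 2$) on the hyperelliptic or plane-quartic model $g_D(x,y) = 0$, reading their coordinates from the parametrization of Section \ref{sec:arithmetic}; for the spin components $W_{57}^\epsilon$, $W_{65}^\epsilon$, $W_{73}^\epsilon$ these coordinates lie in $\qq(\sqrt{D})$. I then search for a rational function of the form $h = \prod_i (x - x_i)^{a_i}$, possibly multiplied by a power of $y$, whose divisor is a prescribed integral combination $\sum_i n_i P_i$ with $\sum_i n_i = 0$; verifying that a candidate $h$ has divisor supported exactly at the cusps is a finite local computation at each $P_i$ and at the points over $x = \infty$. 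Equivalently, one computes the classes of the differences $P_i - P_j$ in $\pic^0(\overline{W}_D)$ by a Riemann--Roch calculation in the model and exhibits a nontrivial linear relation among them; the function $h$ is then recovered by solving the associated linear system over $\qq$ or $\qq(\sqrt{D})$.

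The substance of the theorem --- and the main obstacle --- is that these linear relations actually exist. For a curve of positive genus with $k$ cusps in general position, the degree-zero cuspidal divisors would inject into $\pic^0$ and no nonzero principal divisor would be supported at cusps; the content of the statement is that the cusps of $\overline{W}_D$ fail to be in general position, reflecting the special arithmetic of these curves. Accordingly the proof is a verification rather than a derivation: for each $D$ I would supply an explicit function $h$ and check, by direct computation in the given model, that its divisor is nonzero and supported entirely at the cusps. This establishes the non-constant holomorphic map $W_D \to \cc^*$ in each case.
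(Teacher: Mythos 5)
Your reduction and overall strategy match the paper's: Theorem \ref{thm:principaldivisors} is deduced from the existence, for each listed $D$, of a nonzero principal divisor supported at the cusps (Proposition \ref{prop:principaldivisors} in the paper), and those divisors are found by explicit verification in the biregular models rather than by any conceptual derivation. Your treatment of $W_{44}$ via the rank count from Theorem \ref{thm:w44mw} is a clean shortcut, and for the genus-one components of $W_{57}$, $W_{65}$, $W_{73}$ the paper does exactly what you propose: it locates the cusps, notes that the identity of the elliptic-curve model is itself a cusp, and finds small integer relations among the cusps in the group law. The one place the paper takes a slicker route is $\overline{W}_{53}$ and $\overline{W}_{60}$: instead of solving a Riemann--Roch linear system for an explicit function, it exhibits two holomorphic one-forms, distinct up to scale, each vanishing only at cusps (Propositions \ref{prop:w53canonicaldivisors} and \ref{prop:w60canonicaldivisors}), and takes their ratio, whose divisor is automatically a nonzero difference of cuspidal canonical divisors. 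One small caveat: your opening claim that a holomorphic map $W_D \to \cc^*$ is \emph{the same datum as} a rational function on $\overline{W}_D$ with cuspidal divisor overstates the equivalence, since a holomorphic function on the punctured curve need not extend meromorphically across the cusps; but only the converse direction is used, so the argument is unaffected.
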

For several of the genus two and three Weierstrass curves, we also
find canonical divisors supported at cusps.
\begin{thm}
\label{thm:canonicaldivisors}
Each of the curves $\overline{W}_{53}$, $\overline{W}_{56}$ and
$\overline{W}_{60}$ has a holomorphic one-form vanishing only at
cusps.  The curve $\overline{W}_{61}$ has no holomorphic one-form
vanishing only at cusps.
\end{thm}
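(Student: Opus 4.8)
The plan is to convert the phrase "holomorphic one-form vanishing only at cusps" into an incidence condition on the explicit biregular models of Theorems~\ref{thm:gDxy} and~\ref{thm:gDexy}, and then to verify or refute that condition directly. The first step is to locate the cusps: each cusp of $W_D$ is one of the finitely many boundary points (studied in \cite{mcmullen:spin}) filled in to form $\overline{W}_D$, and its coordinates on the $g_D(x,y)=0$ model are computed from the rational parametrizations recorded in the auxiliary files. I would tabulate, for each of the five curves, the complete list of cusps as points on $g_D=0$.

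For the genus-two curves $\overline{W}_{53}$ and $\overline{W}_{61}$ the models have the hyperelliptic shape $y^2+q(x)\,y=p(x)$ with $\deg q=2$ and $\deg p=6$; completing the square gives $\tilde y^2=4p(x)+q(x)^2$ with hyperelliptic involution $\iota:(x,\tilde y)\mapsto(x,-\tilde y)$. Since on a genus-two curve the canonical series coincides with the hyperelliptic $g^1_2$, the effective canonical divisors are exactly the fibers of the degree-two map $x:\overline{W}_D\to\pp^1$, namely the divisors $P+\iota(P)$. Hence a canonical divisor supported at cusps exists if and only if two cusps lie in a common fiber of $x$ (including the fiber over $\infty$) or a single cusp is a Weierstrass point, i.e.\ a zero of $4p+q^2$. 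For $\overline{W}_{53}$ I would exhibit such a conjugate pair and write the corresponding one-form as the appropriate linear combination of $dx/\tilde y$ and $x\,dx/\tilde y$; for $\overline{W}_{61}$ the assertion is negative, so I would verify from the tabulated cusps that no two of them share an $x$-coordinate and that none lies on $4p+q^2=0$, which rules out any canonical divisor supported at cusps.

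For the genus-three curves $\overline{W}_{56}$ and $\overline{W}_{60}$ the smooth plane quartic models are canonically embedded in $\pp^2$, so the canonical class is the hyperplane class and the effective canonical divisors are precisely the line sections, each of degree $4=2g-2$. A holomorphic one-form vanishing only at cusps therefore corresponds to a line $\ell\subset\pp^2$ whose intersection divisor with the quartic is supported at cusps. I would search the tabulated cusps for such an $\ell$---four collinear cusps, or a line meeting the quartic only at cusps with higher multiplicity (for instance a line tangent to the quartic at a cusp and passing through two further cusps, or a bitangent at two cusps)---and then exhibit the defining linear form, which gives the desired one-form.

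The main obstacle is twofold. In the positive cases the work is a finite incidence search, but one must be careful with intersection multiplicities in the quartic case: a valid line may be tangent to the curve at a cusp, so confirming that its section is supported \emph{only} at cusps requires computing local multiplicities from $g_D$ rather than merely counting incidences. The genuinely delicate step is the negative statement for $\overline{W}_{61}$, where one must certify that none of the finitely many cusps coincide in $x$-coordinate and that none lies on the Weierstrass locus $4p+q^2=0$; the correctness of this exhaustive check rests entirely on having produced the complete and exact list of cusp coordinates in the first step.
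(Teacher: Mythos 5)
Your proposal matches the paper's argument: the paper likewise identifies effective canonical divisors on the genus-two curves $\overline{W}_{53}$ and $\overline{W}_{61}$ with fibers of the hyperelliptic map (intersections with vertical lines $x=c$) and on the canonically embedded quartics $\overline{W}_{56}$ and $\overline{W}_{60}$ with line sections, then checks incidence against the cusp locations computed via the poles of $I_2^5/I_{10}$. The paper exhibits the explicit forms $(2x+7\pm 2\sqrt{53})\,dx/y$ on $\overline{W}_{53}$, the line $Y=2Z$ for $\overline{W}_{56}$, five lines for $\overline{W}_{60}$, and rules out $\overline{W}_{61}$ by the same exhaustive check you describe.
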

\noindent In Figure \ref{fig:G60}, the plane quartic model for
$\overline{W}_{60}$ is shown with the locations of the cusps marked.
The five dashed lines meet $\overline{W}_{60}$ only at cusps and each
corresponds to a holomorphic one-form up to scale on
$\overline{W}_{60}$ vanishing only at cusps.  The ratio of two such
forms corresponds to a holomorphic map $W_{60} \to \cc^*$.

\begin{figure}
  \begin{center}
   \includegraphics[scale=0.55]{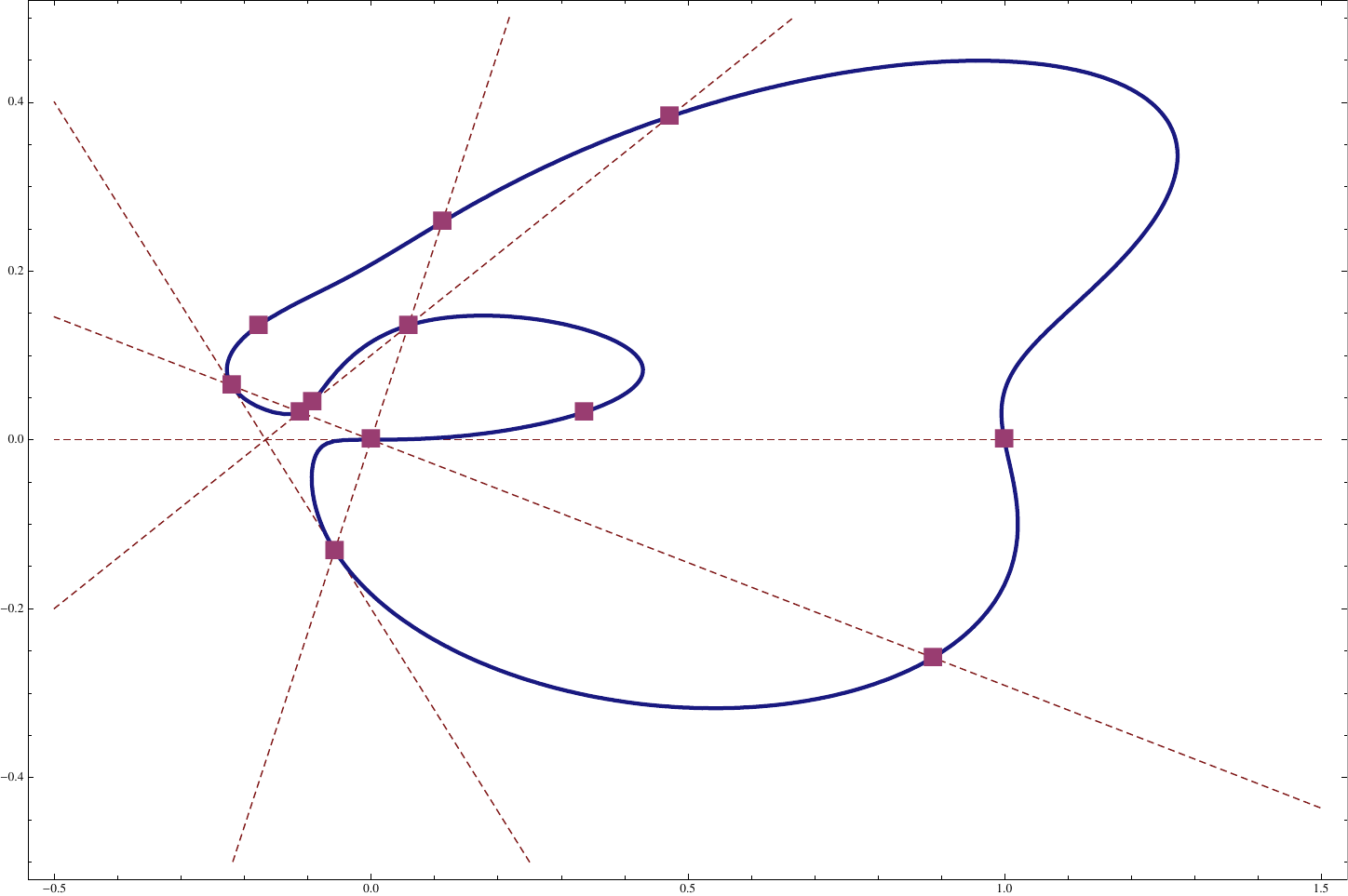}
\end{center}
\sfcaption{\label{fig:G60} {\sf The curve $\overline{W}_{60}$ is
    biregular to the plane quartic $g_{60}(x,y) = 0$ (solid) and the
    five lines shown (dashed) meet $\overline{W}_{60}$ only at cusps
    (squares).}}
\end{figure}

\paragraph{Numerical sampling and Hilbert modular forms.}  
As we now describe, the equations in Table \ref{tab:wDrs} were
obtained by numerically sampling the ratio of certain Hilbert modular
forms.  For $\tau =(\tau_1,\tau_2) \in \hh \times \hh$, define
matrices
\begin{equation}
\label{eqn:Btau}
 \Pi(\tau) = \begin{pmatrix} 
1 & \frac{D+\sqrt{D}}{2} & \tau_1 \frac{1+\sqrt{D}}{2} & \tau_1 \frac{\sqrt{D}}{D} \\ 
1 & \frac{D-\sqrt{D}}{2} & \tau_2 \frac{1-\sqrt{D}}{2} &\tau_2\frac{-\sqrt{D}}{D} 
\end{pmatrix} 
\mbox{ and } M = \frac{1}{2} \spmat{ D+\sqrt{D} & 0 \\ 0 & D-\sqrt{D} }.
\end{equation}
Since multiplication by $M$ preserves the lattice $\Pi(\tau) \cdot
\zz^4$, the abelian variety $B(\tau) = \cc^2/\left(\Pi(\tau)\cdot
\zz^4\right)$ admits real multiplication by $\ord_D$, and the forms
$dz_1$ and $dz_2$ on $\cc^2$ cover $\ord_D$-eigenforms $\eta_1(\tau)$
and $\eta_2(\tau)$ on $B(\tau)$.  There are meromorphic functions $a_k
: \hh \times \hh \to \cc$ for $0 \leq k \leq 5$ so that, for most
$\tau \in \hh \times \hh$, the Jacobian of the algebraic curve
\begin{equation}
\label{eqn:Ywm} 
Y(\tau) \in \M_2 \mbox{ birational to the plane curve } z^2 = w^6 +
a_5(\tau) w^5 + \dots + a_1(\tau) w + a_0(\tau)
\end{equation}
is isomorphic to $B(\tau)$ and the forms $\eta_1(\tau)$ and
$\eta_2(\tau)$ pull back under the Abel-Jacobi map $Y(\tau) \to
B(\tau)$ to the forms $\omega_1(\tau) = dw/z$ and $\omega_2(\tau) = w
\cdot dw/z$. The functions $a_k$ are modular for $\PSL(\ord_D \oplus
\ord_D^\vee)$ and the ratio of $a_0$ with the Igusa-Clebsch invariant
of weight two
\begin{equation}
\label{eqn:a0I2} a_0/I_2 \mbox{ where }  I_2 = -240a_0 + 40a_1a_5 - 16a_2a_4 + 6a_3^2 
\end{equation}
is $\PSL(\ord_D \oplus \ord_D^\vee)$-invariant.  Since $a_0(\tau)$ is
zero if and only if $\omega_2(\tau)$ has a double zero, $a_0/I_2$
covers an algebraic function on $X_D$ which vanishes along $W_D$.

To obtain an explicit model of $W_D$, we numerically sample $a_0/I_2$
using the model of $X_D$ in \cite{elkieskumar:hms} and the Eigenform
Location Algorithm we describe below.\footnote{Note that once we have
  an algebraic model, we will verify it rigorously without any
  reliance on floating-point computations.} Alternatively, one can
numerically sample $a_0/I_2$ using the functions in \texttt{Magma}
related to analytic Jacobians (cf. \cite{vwamelen:analyticjacobians}).
We then interpolate to find an exact rational function\footnote{The
  function $a_0/I_2$ is invariant under the involution
  $(\tau_1,\tau_2) \mapsto (\tau_2,\tau_1)$ which covers the deck
  transformation of the map $X_D$ onto its image in $\M_2$.  In the
  models in \cite{elkieskumar:hms}, this involution corresponds to the
  deck transformation of the map from $X_D$ to the $(r,s)$-plane.}
$w_D(r,s)/I_2(r,s)$ which equals $a_0/I_2$ in these models and whose
numerator appears in Table \ref{tab:wDrs}.

\paragraph{The Eigenform Location Algorithm (ELA).}  
To prove Theorems \ref{thm:wDrs} and \ref{thm:wDers}, in Section
\ref{sec:eformverify} we develop an Eigenform Location Algorithm (ELA,
Figure \ref{fig:ELA}). The algorithm takes as input a genus two curve whose Jacobian has real multiplication, and outputs the locations of the eigenforms. Recall that, for $Y\in \M_2$, there is a
natural pairing between $T_Y\M_2$ and the space of holomorphic
quadratic differentials $Q(Y)$ on $Y$.  There is a well-known formula
for this pairing which we recall in Section \ref{sec:algpairing} in
terms of a hyperelliptic model for $Y$.

Our location algorithm is based on the following theorem, which
is a consequence of Ahlfors's variational formula.
\begin{thm}
\label{thm:eformproducts}
For $\tau$ in the domain of the meromorphic function $Y : \hh \times
\hh \to \M_2$ defined by Equation \ref{eqn:Ywm}, the line in
$Q(Y(\tau))$ spanned by the quadratic differential
\[ q(\tau) = \omega_1(\tau) \cdot \omega_2(\tau) \]
annihilates the image of $(dY)_\tau$.
\end{thm}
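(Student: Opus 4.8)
The plan is to compute the natural pairing between $T_{Y(\tau)}\M_2$ and $Q(Y(\tau))$ explicitly and to show that every tangent vector in the image of $(dY)_\tau$ pairs to zero against $q(\tau)$. Since $Y(\tau)$ has genus two, the multiplication map $\mathrm{Sym}^2 H^0(Y(\tau),K)\to H^0(Y(\tau),K^{\otimes 2}) = Q(Y(\tau))$ is an isomorphism: in the hyperelliptic model the eigenforms give $\omega_1^2 = dw^2/z^2$, $\omega_1\omega_2 = w\,dw^2/z^2$, $\omega_2^2 = w^2\,dw^2/z^2$, which are linearly independent and hence a basis of $Q(Y(\tau))$, with $q(\tau) = \omega_1\omega_2$ the middle (cross) vector. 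Because the multiplication map is an isomorphism, infinitesimal Torelli holds and the Jacobian map $\M_2 \to \mathcal{A}_2$ to the moduli space of principally polarized abelian surfaces is a local isomorphism whose derivative is dual to multiplication. Consequently the composite $\hh \times \hh \xrightarrow{Y} \M_2 \to \mathcal{A}_2$ is the period map $\tau \mapsto B(\tau)$, and the pairing of $(dY)_\tau(v)$ with a quadratic differential $\eta_i\eta_j$ is computed, via Ahlfors's variational formula, as the directional derivative $\partial_v Z_{ij}$ of the $(i,j)$ entry of the period matrix of $B(\tau)$ written in the normalized eigenform basis $\{\eta_1,\eta_2\}$.

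Given this dictionary, the theorem reduces to showing that the Kodaira--Spencer image of the family $\tau \mapsto B(\tau)$ lies in the subspace of $\mathrm{Sym}^2 H^0(K)^*$ spanned by $(\eta_1^*)^2$ and $(\eta_2^*)^2$, whose annihilator in $Q(Y(\tau)) = \mathrm{Sym}^2 H^0(K)$ is exactly the cross-term line $\cc\,\eta_1\eta_2 = \cc\,q$. To establish this I would use the explicit lattice $\Pi(\tau)\cdot\zz^4$: the $z_1$-coordinates of its four generators depend only on $\tau_1$ and the $z_2$-coordinates only on $\tau_2$, which says precisely that deforming $\tau_1$ moves the lattice only in the $\eta_1$-direction and deforming $\tau_2$ only in the $\eta_2$-direction. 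Hence the deformation preserves the eigenform splitting $H^0(K) = \cc\,\eta_1 \oplus \cc\,\eta_2$ forced by the real multiplication by $\ord_D$, so $\partial/\partial\tau_1$ and $\partial/\partial\tau_2$ map to multiples of $(\eta_1^*)^2$ and $(\eta_2^*)^2$ and carry no $\eta_1^*\eta_2^*$ component. Equivalently, in a symplectic basis of $H_1(B(\tau),\zz)$ adapted to the $\ord_D \oplus \ord_D^\vee$-module structure the off-diagonal period $Z_{12}$ is independent of $\tau$. Pairing against the cross term then gives $\langle (dY)_\tau(v),\, q(\tau)\rangle = \langle (dY)_\tau(v),\, \eta_1\eta_2\rangle = \partial_v Z_{12} = 0$ for every $v$, which is the claim.

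Conceptually, the vanishing is the statement that preserving real multiplication forces the family to respect the eigenspace decomposition $H^0(K) = \cc\,\eta_1 \oplus \cc\,\eta_2$, so the tangent image of $(dY)_\tau$ is ``diagonal'' in $\mathrm{Sym}^2 H^0(K)^*$ and kills the cross term $\cc\,\eta_1\eta_2 = \cc\,q$ against which it is paired. The main obstacle is making the normalization of the first paragraph precise: one must match the universal constant in Ahlfors's variational formula with the intrinsic Serre-duality pairing recalled in Section~\ref{sec:algpairing}, and confirm that the forms $\omega_1,\omega_2 = dw/z,\ w\,dw/z$ on the hyperelliptic model are genuinely the pullbacks of the normalized eigenforms $\eta_1,\eta_2$ rather than some other scaling. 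Rescaling is harmless for the line spanned by $q$, but it must be tracked in order to invoke the identity $\partial_v Z_{ij} = \langle \cdot,\,\eta_i\eta_j\rangle$. A secondary, purely computational obstacle is producing the $\ord_D \oplus \ord_D^\vee$-adapted symplectic basis that witnesses the constancy of $Z_{12}$; this is a finite linear-algebra verification using $\Pi(\tau)\cdot\zz^4$ and the matrix $M$.
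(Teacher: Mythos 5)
Your proposal is correct and is essentially the paper's own argument: Ahlfors's variational formula identifies the pairing of a tangent vector with $\omega_1(\tau)\cdot\omega_2(\tau)$ as the derivative of the off-diagonal entry of the period matrix taken with respect to a symplectic eigenbasis for the real multiplication, and that entry vanishes identically along the family because real multiplication forces the period matrix to be diagonal (the paper's Propositions \ref{prop:eigenbasis}, \ref{prop:diagperRM} and \ref{prop:eformproducts}). Your concrete observation that the rows of $\Pi(\tau)$ depend separately on $\tau_1$ and $\tau_2$ is just an explicit instance of the paper's abstract statement that the lift of $Y$ lands in $\teich(S,\rho)$, so the two proofs coincide in substance.
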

\noindent Theorem \ref{thm:eformproducts} characterizes the eigenforms
$\omega_1(\tau)$ and $\omega_2(\tau)$ on $Y(\tau)$ up to permutation
and scale.  Using an algebraic model for the image of $X_D$ in $\M_2$
(as in, for example,
\cite{elkieskumar:hms,gruenewald:humbert,runge:endomorphism}) and the
formula in Section \ref{sec:algpairing}, we can use Theorem
\ref{thm:eformproducts} to identify eigenforms for real
multiplication.  This observation is the basis for ELA.  By running
ELA with floating point input, we numerically sample the function
$a_0/I_2$ defined in the previous paragraph and generate Tables
\ref{tab:wDrs} and \ref{tab:wD0rs} refered to in Theorems
\ref{thm:wDrs} and \ref{thm:wDers}.  By running ELA with input defined
over a function field $K$ over a number field
(e.g. $K=\qq(r)[s]/(w_D)$), we prove Theorems \ref{thm:wDrs} and
\ref{thm:wDers} using only rigorous arithmetic in $K$.

In \cite{km:correspondences}, we will describe a second method of
eigenform location based on explicit algebraic correspondences and
similar in spirit to \cite{vwamelen:examples,vwamelen:provingcm}. This
technique could be used to prove Theorem \ref{thm:wDrs} and such a proof would, unlike the proof in this
paper, be logically independent of \cite{elkieskumar:hms}.  We found
this correspondence method practical for certifying single eigenforms
and impractical for certifying positive dimensional families of
eigenforms.

\paragraph{Computer files.}  
Auxiliary files containing extra information on the Weierstrass curves
(omitted here for lack of space), as well as computer code to certify
our equations, are available from
\url{http://arxiv.org/abs/1406.7057}. To access these, download the
source file for the paper. This will produce both the \LaTeX \ file for
this paper and the computer code referenced below. The text file
\texttt{README} gives a reader's guide to the various auxiliary files.

\paragraph{Outline.}  
We conclude this Introduction by outlining the remaining sections of
this paper.
\begin{enumerate}
\item We begin in Section \ref{sec:jacRM} by studying families of
  marked Riemann surfaces whose Jacobians admit real multiplication.
  We prove that, for a Riemann surface $Y$ whose Jacobian has real
  multiplication, there is a symplectic basis $U$ for $H_1(Y,\rr)$
  consisting of eigenvectors for real multiplication (Proposition
  \ref{prop:eigenbasis}) and that the period matrix for $Y$ with
  respect to $U$ is diagonal (Proposition \ref{prop:diagperRM}).
  Using Ahlfors's variational formula, we deduce Proposition
  \ref{prop:eformproducts} which places a condition on eigenform
  products and generalizes Theorem \ref{thm:eformproducts}.
\item We then study the pairing between the vector spaces $Q(Y)$ and
  $T_Y\M_2$ for a genus two Riemann surface $Y$ birational to the
  plane curve defined by $z^2=f(w)$ with $\deg(f) = 5$.  There is a
  well known formula for this pairing in terms of the roots of $f(w)$.
  We recall this formula in Proposition \ref{prop:algpairingrts} and
  deduce Theorem \ref{thm:algpairingcoeffs} which gives a formula in
  terms of the coefficients of $f(w)$.
\item In Section \ref{sec:eformverify}, we combine the condition on
  eigenforms imposed by Proposition \ref{prop:eformproducts} with the
  pairing given in Section \ref{sec:algpairing} to give an Eigenform
  Location Algorithm.  We demonstrate our algorithm by identifying the
  eigenforms for real multiplication by $\ord_{12} = \zz\left[
    \sqrt{3} \right]$ on a particular genus two algebraic curve
  (Theorem \ref{thm:w12pt}).
\item In Section \ref{sec:wcurvecertify}, we implement ELA over
  function fields to certify our models of irreducible $W_D$ and prove
  Theorems \ref{thm:wDrs} and \ref{thm:gDxy}.
\item We then turn to reducible Weierstrass curves in Section
  \ref{sec:cuspspin}.  Using the technique in Section
  \ref{sec:wcurvecertify}, we can show that the curve $w_D^0(r,s) = 0$
  gives a birational model for {\em an} irreducible component of
  $W_D$.  In Section \ref{sec:cuspspin}, we explain how to distinguish
  between the irreducible components of $W_D$ by studying cusps,
  allowing us to prove Theorems \ref{thm:wDers} and \ref{thm:gDexy}.
\item In Section \ref{sec:arithmetic}, we discuss the proofs of the
  remaining theorems stated in this introduction concerning the
  arithmetic geometry of Weierstrass curves.
\end{enumerate}

\paragraph{Acknowledgments.}  
We thank Noam Elkies, Matt Emerton, Curt McMullen, Alex Wright and the
anonymous referees for helpful comments. A. Kumar was supported in
part by National Science Foundation grant DMS-0952486 and by a grant
from the MIT Solomon Buchsbaum Research Fund. R. E. Mukamel was
supported in part by National Science Foundation grant DMS-1103654.
We used the computer algebra systems \texttt{gp/Pari} \cite{pari},
\texttt{Magma} \cite{magma}, \texttt{Maple} \cite{maple} and
\texttt{Maxima} \cite{maxima}, extensively in our calculations. In
particular, most of the auxiliary computer files for verifying our
calculations are \texttt{Magma} files (however, they can be easily
adapted to different computer algebra systems, such as \texttt{Sage}
\cite{sage}).

\section{Jacobians with real multiplication}
\label{sec:jacRM}
Throughout this section, we fix the following:
\begin{itemize}
\item a compact topological surface $S$ of genus $g$,
\item an order $\ord$ in a totally real field $K$ of degree $g$ over
  $\qq$, and
\item a proper, self-adjoint embedding of rings $\rho: \ord
  \rightarrow \End(H_1(S,\zz))$.
\end{itemize}
Here, proper means that $\rho$ does not extend to a larger subring of
$K$ and self-adjoint is with respect to the intersection symplectic
form $E(S)$ on $H_1(S,\zz)$, i.e. for each $x,y \in H_1(S,\zz)$ and
$\alpha \in \ord$ we have $E(S)(\rho(\alpha) x,y) =
E(S)(x,\rho(\alpha) y)$.

Our goal for this section is to define and study the Teichm\"uller
space of the pair $(S,\rho)$.  The space $\teich(S,\rho)$ consists of
complex structures $Y$ on $S$ for which $\rho$ extends to real
multiplication by $\ord$ on $\jac(Y)$. In Proposition
\ref{prop:eigenbasis}, we show that there is a basis $U$ for
$H_1(S,\rr)$ consisting of eigenvectors for $\rho$.  In Proposition
\ref{prop:diagperRM}, we show that $Y$ is in $\teich(S,\rho)$ if and
only if the period matrix for $Y$ with respect to $U$ is diagonal.  In
Proposition \ref{prop:eformproducts}, we combine Ahlfors's variational
formula with Proposition \ref{prop:diagperRM} to derive a condition
satisfied by products of eigenforms for real multiplication on $Y \in
\teich(S,\rho)$.  The condition in Theorem \ref{thm:eformproducts}
follows easily from of Proposition \ref{prop:eformproducts}.

The results in the section are, for the most part, well known.  We
include them as background and to fix notation.  In Sections
\ref{sec:eformverify} and \ref{sec:wcurvecertify}, we will use
Proposition \ref{prop:eformproducts} to certify that certain algebraic
one-forms are eigenforms for real multiplication and show that the
equations in Table \ref{tab:wDrs} give algebraic models of Weierstrass
curves.  For additional background on abelian varieties, Jacobians and
their endomorphisms see \cite{birkenhakelange:cxabelianvarieties}, for
background on Hilbert modular varieties see
\cite{mcmullen:foliations,vdgeer:hms} and for background on
Teichm\"uller theory and moduli space of Riemann surfaces see
\cite{hubbard:teichthy,imayoshitaniguchi:teichthy,harris:moduli}.

\paragraph{Teichm\"uller space of $S$.}  
Let $\teich(S)$ be the Teichm\"uller space of $S$.  The space
$\teich(S)$ is the fine moduli space representing the functor sending
a complex manifold $B$ to the set of holomorphic families over $B$
whose fibers are marked by $S$ up to equivalence.  In particular, a
point $Y \in \teich(S)$ corresponds to an isomorphism class of Riemann
surface marked by $S$ and there are canonical isomorphisms $H_1(Y,\zz)
\cong H_1(S,\zz)$, $\pi_1(Y) \cong \pi_1(S)$, etc.  The space
$\teich(S)$ is a complex manifold homeomorphic to $\rr^{6g-6}$ and is
isomorphic to a bounded domain in $\cc^{3g-3}$.

\paragraph{Moduli space.}  
Let $\Mod(S)$ denote the {\em mapping class group} of $S$, i.e. the
group of orientation preserving homeomorphisms from $S$ to itself up
to homotopy.  The group $\Mod(S)$ acts properly discontinuously on
$\teich(S)$ and the quotient
\[ \M_g = \teich(S)/\Mod(S) \]
is a complex orbifold which coarsely solves the moduli problem for
unmarked families of Riemann surfaces homeomorphic to $S$.  We call
$\M_g$ the {\em moduli space of genus $g$ Riemann surfaces}.

\paragraph{Holomorphic one-forms and Jacobians.}  
For each $Y \in \M_g$, let $\Omega(Y)$ be the vector space of
holomorphic one-forms on $Y$ and let $\Omega(Y)^*$ be the vector space
dual to $\Omega(Y)$.  By complex analysis, $\dim_\cc \Omega(Y) = g$
and the map
\begin{equation}
\label{eqn:HinO}
f : H_1(Y,\rr) \rightarrow \Omega(Y)^* \mbox{ given by } f(a)(\omega) = \int_a \omega
\end{equation}
is an $\rr$-linear isomorphism.  In particular, $f(H_1(Y,\zz))$ is a
lattice in $\Omega(Y)^*$ and the quotient
\begin{equation}
\label{eqn:jac}
\jac(Y) = \Omega(Y)^* / f(H_1(Y,\zz))
\end{equation}
is a complex torus called the {\em Jacobian} of $Y$.  The Hermitian
form $H^*$ on $\Omega(Y)^*$ dual to the form
\begin{equation}
\label{eqn:defH}
H(\omega,\eta) =  \frac{i}{2} \int_Y \omega \wedge \overline{\eta} \mbox{ for each } \omega, \eta \in \Omega(Y)
\end{equation}
defines a principal polarization on $\jac(Y)$ since the pullback of
$\operatorname{Im}(H^*)$ under $f$ restricts to the intersection
pairing $E(Y)$ on $H_1(Y,\zz)$.

\paragraph{Jacobian endomorphisms.}  
An {\em endomorphism} of $\jac(Y)$ is a holomorphic homomorphism from
$\jac(Y)$ to itself.  Since $\jac(Y)$ is an abelian group, the
collection $\End(\jac(Y))$ of all endomorphisms of $\jac(Y)$ forms a
ring called the {\em endomorphism ring} of $\jac(Y)$.  Every
endomorphism $R \in \End(\jac(Y))$ arises from $\cc$-linear map
$\rho_a(R) : \Omega(Y)^* \to \Omega(Y)^*$ preserving the lattice
$f(H_1(Y,\zz))$.  The assignment
\begin{equation}
\label{eqn:anrep}
\rho_a : \End(\jac(Y)) \to \End(\Omega(Y)^*) \mbox{ given by } R \mapsto \rho_a(R)
\end{equation}
is an embedding of rings called the {\em analytic representation} of
$\End(\jac(Y))$.  We will denote by $\rho_a^*$ the representation of
$\End(\jac(Y))$ on $\Omega(Y)$ dual to $\rho_a$.  The assignment
\begin{equation}
\label{eqn:ratrep}
\rho_r : \End(\jac(Y)) \to \End(H_1(Y,\zz)) \mbox{ given by } \rho_r(R) = f^{-1} \circ \rho_a(R) \circ f
\end{equation}
is also an embedding of rings and is called the {\em rational
  representation} of $\End(\jac(Y))$.

For any endomorphism $R \in \End(\jac(Y))$, there is another
endomorphism $R^* \in \End(\jac(Y))$ called the {\em adjoint} of $R$
and characterized by the property that $\rho_r(R^*)$ is the
$E(Y)$-adjoint of $\rho_r(R)$.  The assignment $R \mapsto R^*$ defines
an (anti-)involution on $\End(\jac(Y))$ called the {\em Rosati
  involution}.

\paragraph{Real multiplication.}  
Recall that $K$ is a totally real number field of degree $g$ over
$\qq$ and $\ord$ is an order in $K$, i.e. a subring of $K$ which is
also a lattice.  We will say that $\jac(Y)$ {\em admits real
  multiplication by $\ord$} if there is
\begin{equation}
\label{eqn:defRM}
\mbox{a proper, self-adjoint embedding } \iota : \ord \rightarrow \End(\jac(Y)).
\end{equation}
Proper means that $\iota$ does not extend to a larger subring in $K$
and self-adjoint means that $\iota(\alpha)^* = \iota(\alpha)$ for each
$\alpha \in \ord$.  If $\ord$ is maximal (i.e. $\ord$ is not contained
in a strictly larger order in $K$) then an embedding $\ord
\to \End(\jac(Y))$ is automatically proper.

\paragraph{Teichm\"uller space of the pair $(S,\rho)$.}  
Recall that $\rho: \ord \to \End(H_1(S,\zz))$ is a proper and
self-adjoint embedding of rings.  For $Y \in \teich(S)$, we will say
that $\rho$ {\em extends to real multiplication by $\ord$ on
  $\jac(Y)$} if there is
\begin{equation}
\label{eqn:defextendrho}
\mbox{an embedding } \iota : \ord \to \End(\jac(Y)) \mbox{ satisfying } \rho_r \circ \iota = \rho.
\end{equation}
Equivalently, $\rho$ extends to real multiplication if and only if the
$\rr$-linear extension of $f \circ \rho(\alpha) \circ f^{-1}$ to
$\Omega(Y)^*$ is $\cc$-linear for each $\alpha \in \ord$.  Since
$\rho$ is proper and self-adjoint, an $\iota$ as in Equation
\ref{eqn:defextendrho} is automatically proper and self-adjoint in the
sense of the previous paragraph.  In Equation \ref{eqn:defextendrho},
we have implicitly identified $H_1(Y,\rr)$ with $H_1(S,\rr)$ via the
marking.

We define the Teichm\"uller space of the pair $(S,\rho)$ to be the space
\begin{equation}
\label{eqn:defTeichpair}
\teich(S,\rho) = \left\{ Y \in \teich(S) : \mbox{$\rho$ extends to
  real multiplication by $\ord$ on $\jac(Y)$} \right\}.
\end{equation}
If $\rho_1$ and $\rho_2$ are two proper, self-adjoint embeddings $\ord
\to \End(H_1(S,\zz))$ and $g \in \Mod(S)$ is a mapping class such that
the induced map on homology $g_* \in \End(H_1(S,\zz))$ conjugates
$\rho_1(\alpha)$ to $\rho_2(\alpha)$ for each $\alpha \in \ord$, then
$g$ gives a biholomorphic map between $\teich(S,\rho_1)$ and
$\teich(S,\rho_2)$.

\paragraph{Symplectic $K$-modules and their eigenbases.}  
The representation
\[ \rho_K = \rho \otimes_\zz \qq : K \rightarrow \End(H_1(S,\qq)) \]
turns $H_1(S,\qq)$ into a $K$-module.  We begin our study of
$\teich(S,\rho)$ by showing that there is a unique symplectic
$K$-module that arises in this way.

Let $E(\Tr)$ be the symplectic {\em trace form} on $K \oplus K$ defined by
\begin{equation}
\label{eqn:trpairing}
E(\Tr)\left( (x_1,y_1),(x_2,y_2) \right) = \Tr^K_\qq (x_1 y_2 - y_1 x_2).
\end{equation}
It is easy to check that multiplication by $k \in K$ is self-adjoint
for $E(\Tr)$.
\begin{prop}
\label{prop:sympKmods}
Regarding $H_1(S,\qq)$ as a $K$-module via $\rho_K = \rho \otimes_\zz
\qq$, there is a $K$-linear isomorphism
\[ T: K \oplus K \rightarrow H_1(S,\qq) \]
which is symplectic for the trace from $E(\Tr)$ on $K \oplus K$ and
the intersection form $E(S)$ on $H_1(S,\qq)$.
\end{prop}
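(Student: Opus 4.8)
The plan is to upgrade the $\qq$-valued form $E(S)$ to a $K$-valued symplectic form on $H_1(S,\qq)$ and then to read off $T$ from a symplectic basis over $K$. First I would note that, since $K$ is a field and $\rho_K$ is a nonzero ring homomorphism, $\rho_K$ is injective and endows $V := H_1(S,\qq)$ with the structure of a $K$-vector space; I will write $\alpha \cdot x$ for $\rho_K(\alpha)x$. Because $\dim_\qq V = 2g$ and $[K:\qq]=g$, we have $\dim_K V = 2$, so $V$ and $K \oplus K$ are isomorphic as $K$-modules and the whole content of the statement is to arrange that some such isomorphism carries $E(\Tr)$ to $E(S)$.

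The key step is to produce a $K$-bilinear form $\tilde E : V \times V \to K$ with $E(S) = \Tr^K_\qq \circ \tilde E$. Since $K/\qq$ is separable, the trace pairing $(\alpha,\beta)\mapsto \Tr^K_\qq(\alpha\beta)$ on $K$ is nondegenerate. Hence for fixed $x,y \in V$ the $\qq$-linear functional $\alpha \mapsto E(S)(\alpha\cdot x, y)$ on $K$ is represented by a unique $\tilde E(x,y)\in K$ satisfying $E(S)(\alpha \cdot x, y) = \Tr^K_\qq\!\big(\alpha\,\tilde E(x,y)\big)$ for all $\alpha \in K$; taking $\alpha = 1$ gives $E(S) = \Tr^K_\qq\circ\tilde E$. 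Uniqueness combined with nondegeneracy of the trace pairing yields $K$-linearity of $\tilde E$ in the first variable, and the self-adjointness hypothesis $E(S)(\alpha\cdot x, y) = E(S)(x,\alpha\cdot y)$ transfers this to the second variable. Finally, since $E(S)$ is alternating, $E(S)(\alpha\cdot x, y) = -E(S)(y,\alpha \cdot x) = -E(S)(\alpha\cdot y, x)$, and comparing traces forces $\tilde E(x,y) = -\tilde E(y,x)$; thus $\tilde E$ is alternating, and its nondegeneracy follows from that of $E(S)$.

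It then remains to normalize. A nondegenerate alternating form on the two-dimensional $K$-vector space $V$ admits a symplectic basis $e_1,e_2$ with $\tilde E(e_1,e_2)=1$: choose $e_1\neq 0$, use nondegeneracy to find $f$ with $c:=\tilde E(e_1,f)\neq 0$, and set $e_2 = c^{-1}\cdot f$. Define $T:K\oplus K \to V$ by $T(x,y)= x\cdot e_1 + y\cdot e_2$, a $K$-linear isomorphism. Expanding by $K$-bilinearity and using $\tilde E(e_1,e_1)=\tilde E(e_2,e_2)=0$ and $\tilde E(e_1,e_2)=1$ gives $\tilde E(T(x_1,y_1),T(x_2,y_2)) = x_1 y_2 - y_1 x_2$, so that $E(S)(T(x_1,y_1),T(x_2,y_2)) = \Tr^K_\qq(x_1 y_2 - y_1 x_2) = E(\Tr)\big((x_1,y_1),(x_2,y_2)\big)$, which is exactly the asserted compatibility.

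I expect the main obstacle to be the construction and verification of $\tilde E$: specifically, checking that the element produced by trace-duality is genuinely $K$-bilinear rather than merely $\qq$-bilinear, and that it is alternating. This is precisely where separability of $K/\qq$ (nondegeneracy of the trace pairing) and the self-adjointness of $\rho$ are used. Once $\tilde E$ is available, the passage to a symplectic basis and the final intertwining computation are routine linear algebra over the field $K$.
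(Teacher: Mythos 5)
Your proof is correct and complete, but it is organized around a different key object than the paper's argument. The paper proceeds directly: it picks any nonzero $x$, observes that the $K$-line $\rho_K(K)\cdot x$ is isotropic because $\rho_K$ is self-adjoint, uses non-degeneracy of $E(S)$ to find $y$ with $E(S)(\rho_K(k)\cdot x, y) = \Tr^K_\qq(k)$, and defines $T(k_1,k_2) = \rho_K(k_1)\cdot x + \rho_K(k_2)\cdot y$, leaving the symplectic compatibility as a short computation. You instead first transfer $E(S)$ to a $K$-valued alternating form $\tilde E$ via trace duality and then invoke the existence of a symplectic $K$-basis for the two-dimensional space $(V,\tilde E)$. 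The two arguments use exactly the same inputs --- self-adjointness of $\rho$, non-degeneracy of $E(S)$, and non-degeneracy of the trace pairing on $K$ --- and your choice of $e_1, e_2$ with $\tilde E(e_1,e_2)=1$ is precisely the paper's choice of $x, y$ in disguise. What your version buys is conceptual clarity and reusability (the form $\tilde E$ cleanly isolates where each hypothesis is used, and the normalization step becomes standard symplectic linear algebra over a field); what the paper's version buys is brevity, since it never needs to verify bilinearity, alternation, and non-degeneracy of an auxiliary form. All the individual steps you flag as potential obstacles (the $K$-bilinearity and alternation of $\tilde E$) are handled correctly in your sketch, so there is no gap.
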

\begin{proof}
Choose any $x \in H_1(S,\qq)$ and set $L = \rho_K(K) \cdot x$.  Since
$\rho_K$ is self-adjoint, $L$ is isotropic.  The non-degeneracy of the
intersection form $E(S)$ ensures that there is a $y \in H_1(S,\qq)$
such that $E(S)\left( \rho_K(k) \cdot x, y \right)= \Tr^K_\qq(k)$.
Define a map $T : K \oplus K \rightarrow H_1(S,\qq)$ by the formula
\[ T(k_1,k_2) = \rho_K(k_1) \cdot x + \rho_K(k_2) \cdot y. \]
Clearly, the map $T$ is $K$-linear.  An easy computation shows that
$T$ satisfies $E(\Tr)(v,w) = E(S)(T(v),T(w))$ for each $v,w \in K
\oplus K$ which, together with the non-degeneracy of $E(\Tr)$, implies
that $T$ is a symplectic vector space isomorphism.
\end{proof}
Now let $h_1,\dots,h_g : K \rightarrow \rr$ be the $g$ places for $K$.
Proposition \ref{prop:sympKmods} allows us to show that there is a
symplectic basis for $H_1(S,\rr)$ adapted to $\rho$.
\begin{prop}
\label{prop:eigenbasis}
There is a symplectic basis $U = \left<
a_1,\dots,a_g,b_1,\dots,b_g\right>$ for $H_1(S,\rr)$ such that
\begin{equation}
\label{eqn:eigenbasis}
\rho(\alpha) a_i = h_i(\alpha) \cdot a_i \mbox{ and } \rho(\alpha) b_i = h_i(\alpha) \cdot b_i \mbox{ for each $\alpha \in \ord$.}
\end{equation}
\end{prop}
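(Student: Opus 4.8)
The plan is to reduce everything to Proposition \ref{prop:sympKmods} by extending scalars from $\qq$ to $\rr$ and diagonalizing the action of $K$. Write $K_\rr = K \otimes_\qq \rr$. Since $K$ is totally real of degree $g$, its $g$ real places assemble into a ring isomorphism $K_\rr \cong \rr^g$ sending $\alpha \otimes t$ to $(t\,h_1(\alpha),\dots,t\,h_g(\alpha))$. Let $\epsilon_1,\dots,\epsilon_g \in K_\rr$ denote the preimages of the standard idempotents of $\rr^g$. Then multiplication by any $\alpha \in K$ acts on $\epsilon_i$ by the scalar $h_i(\alpha)$, so each $\epsilon_i$ is an eigenvector for the $K$-action. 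Moreover the $\rr$-linear extension of $\Tr^K_\qq$ to $K_\rr$ is the sum-of-coordinates map under $K_\rr \cong \rr^g$, and since $\epsilon_i \epsilon_j = \delta_{ij}\,\epsilon_i$ we get the normalization $\Tr(\epsilon_i\epsilon_j) = \delta_{ij}$.

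Next I would build a symplectic eigenbasis inside $(K \oplus K)_\rr = K_\rr \oplus K_\rr$. Set $a_i' = (\epsilon_i,0)$ and $b_i' = (0,\epsilon_i)$ for $1 \le i \le g$. These $2g$ vectors form an $\rr$-basis, and the diagonal $K$-action gives $\alpha \cdot a_i' = h_i(\alpha)\,a_i'$ and $\alpha \cdot b_i' = h_i(\alpha)\,b_i'$ for each $\alpha \in \ord$. Feeding these into the defining formula
\[ E(\Tr)\big((x_1,y_1),(x_2,y_2)\big) = \Tr^K_\qq(x_1 y_2 - y_1 x_2) \]
and using $\Tr(\epsilon_i\epsilon_j) = \delta_{ij}$, one computes $E(\Tr)(a_i',b_j') = \delta_{ij}$ and $E(\Tr)(a_i',a_j') = E(\Tr)(b_i',b_j') = 0$. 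Thus $\left< a_1',\dots,a_g',b_1',\dots,b_g'\right>$ is a symplectic basis of $(K\oplus K)_\rr$ for the $\rr$-linear extension of $E(\Tr)$, consisting of eigenvectors with the correct eigenvalues.

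Finally, I would transport this basis to $H_1(S,\rr)$. Tensoring the $K$-linear symplectic isomorphism $T$ of Proposition \ref{prop:sympKmods} with $\rr$ yields an $\rr$-linear isomorphism $T_\rr : (K\oplus K)_\rr \to H_1(S,\rr)$ that is symplectic for $E(\Tr)$ and $E(S)$ and intertwines multiplication by $\alpha \in \ord$ with $\rho(\alpha)$ (because $T(\alpha v) = \rho_K(\alpha)T(v)$ and $\rho_K$ restricts to $\rho$ on $\ord$). Setting $a_i = T_\rr(a_i')$ and $b_i = T_\rr(b_i')$, symplecticity carries the standard relations to $U = \left< a_1,\dots,a_g,b_1,\dots,b_g\right>$, while $\rho(\alpha)\,a_i = T_\rr(\alpha\cdot a_i') = h_i(\alpha)\,a_i$ and likewise for $b_i$, which is exactly Equation \ref{eqn:eigenbasis}. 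The one place that requires genuine care—the main obstacle, though a mild one—is verifying that the trace form normalizes to the \emph{standard} symplectic form on the eigenbasis, i.e. that the idempotents $\epsilon_i$ are paired as $\Tr(\epsilon_i\epsilon_j) = \delta_{ij}$; everything else is formal once Proposition \ref{prop:sympKmods} is available.
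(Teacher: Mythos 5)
Your proof is correct and follows essentially the same route as the paper: both reduce to Proposition \ref{prop:sympKmods} and then exhibit a symplectic eigenbasis in $(K\oplus K)\otimes_\qq\rr$. The paper's explicit vectors $a_i = \sum_j (\alpha_j,0)\otimes h_i(\beta_j)$ and $b_i = \sum_j (0,\beta_j)\otimes h_i(\alpha_j)$, built from a trace-dual pair of $\qq$-bases, are exactly your $(\epsilon_i,0)$ and $(0,\epsilon_i)$ written in coordinates, so your idempotent formulation is just a cleaner packaging of the identical construction.
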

\begin{proof}
Since the group $H_1(S,\qq)$ is isomomorphic as a symplectic
$K$-module to $K \oplus K$ with the trace pairing $E(\Tr)$
(Proposition \ref{prop:sympKmods}), it suffices to construct an
analogous basis for $(K \oplus K) \otimes_\qq \rr$.  Let
$\alpha_1,\dots,\alpha_g$ be an arbitrary $\qq$-basis for $K$.  Since
$\Tr : K \times K \to \qq$ is non-degenerate, we can choose
$\beta_1,\dots,\beta_g \in K$ so that $\Tr^K_\qq(\alpha_i \beta_j) =
\delta_{ij}$.  Setting
\[ a_i = \sum_{j = 1}^g (\alpha_j,0) \otimes h_i(\beta_j)  \mbox{ and } b_i = \sum_{j=1}^g (0,\beta_j) \otimes h_i(\alpha_j) \]
yields a basis with the desired properties.
\end{proof}

\paragraph{The period map.} 
Now let $\mathcal H_g$ be the Siegel upper half-space consisting of $g
\times g$ symmetric matrices with positive definite imaginary part.
The space $\mathcal H_g$ is equal to an open, bounded and symmetric
domain in the $(g^2+g)/2$-dimensional space of all symmetric matrices.

As we now describe, the basis $U$ for $H_1(S,\rr)$ given by
Proposition \ref{prop:eigenbasis} allows us to define a holomorphic
period map from $\teich(S)$ to $\mathcal H_g$.  For $Y \in \teich(S)$,
we can view $U$ as a basis for $H_1(Y,\rr)$ via the marking by $S$.
Let $\left< \omega_1(Y),\dots,\omega_g(Y) \right>$ be the basis for
$\Omega(Y)$ dual to $U$, i.e. such that $\int_{a_j} \omega_k(Y) =
\delta_{jk}$.  The {\em period map} is defined by
\begin{equation}
\label{eqn:defpermap}
P : \teich(S) \to \mathcal H_g \mbox{ where } P_{jk}(Y) = \int_{b_j} \omega_k(Y).
\end{equation}
Our next proposition characterizes the points in $\teich(S,\rho)$ \cite[\S 6]{mcmullen:billiards}.
\begin{prop}
\label{prop:diagperRM}
For $Y \in \teich(S)$, the homomorphism $\rho$ extends to real
multiplication by $\ord$ on $\jac(Y)$ if and only if the period matrix
$P(Y)$ is diagonal.
\end{prop}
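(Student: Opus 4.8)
The plan is to reduce everything to the $\cc$-linearity criterion recorded just after Equation \eqref{eqn:defextendrho}: $\rho$ extends to real multiplication by $\ord$ on $\jac(Y)$ if and only if, for every $\alpha \in \ord$, the $\rr$-linear endomorphism $\phi_\alpha = f \circ \rho(\alpha) \circ f^{-1}$ of $\Omega(Y)^*$ is $\cc$-linear. The proof then becomes a short computation in the basis of $\Omega(Y)^*$ coming from $U$, with the period matrix $P(Y)$ appearing as the change-of-basis data.

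First I would set up two observations. Since $\int_{a_j}\omega_k(Y) = \delta_{jk}$, the vector $f(a_j)$ is precisely the element of the $\cc$-basis of $\Omega(Y)^*$ dual to $\langle\omega_1(Y),\dots,\omega_g(Y)\rangle$; in particular $\langle f(a_1),\dots,f(a_g)\rangle$ is a $\cc$-basis of $\Omega(Y)^*$. Evaluating $f(b_j)$ on $\omega_k(Y)$ and using $\int_{b_j}\omega_k(Y) = P_{jk}(Y)$ shows that the cycle $b_j$ is sent to
\[ f(b_j) = \sum_{k=1}^g P_{jk}(Y)\, f(a_k). \]
Finally, because $U$ is an eigenbasis for $\rho$ (Proposition \ref{prop:eigenbasis}) and each $h_j(\alpha)$ is real, $\rr$-linearity of $f$ gives $\phi_\alpha(f(a_j)) = h_j(\alpha) f(a_j)$ and $\phi_\alpha(f(b_j)) = h_j(\alpha) f(b_j)$ for every $\alpha \in \ord$.

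For the forward implication I would assume every $\phi_\alpha$ is $\cc$-linear. Then $\phi_\alpha$ is determined by its values on the $\cc$-basis $\langle f(a_k)\rangle$, where it acts by the scalar $h_k(\alpha)$. Expanding $\phi_\alpha(f(b_j))$ two ways — once by $\cc$-linearity through the displayed expression for $f(b_j)$, and once via the eigenvalue relation $\phi_\alpha(f(b_j)) = h_j(\alpha) f(b_j)$ — and comparing coefficients of $f(a_k)$ yields $P_{jk}(Y)\,(h_k(\alpha) - h_j(\alpha)) = 0$ for all $j,k,\alpha$. The content of the argument is then the separation of places: the $g$ embeddings $h_1,\dots,h_g$ are pairwise distinct and $\ord$ spans $K$ over $\qq$, so for each $j \neq k$ some $\alpha \in \ord$ has $h_j(\alpha) \neq h_k(\alpha)$, forcing $P_{jk}(Y) = 0$. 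Hence $P(Y)$ is diagonal.

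For the converse I would suppose $P(Y)$ is diagonal, so $f(b_j) = P_{jj}(Y)\,f(a_j)$, and define $\psi_\alpha$ to be the $\cc$-linear endomorphism of $\Omega(Y)^*$ with $\psi_\alpha(f(a_j)) = h_j(\alpha)\,f(a_j)$; I would then check $\psi_\alpha = \phi_\alpha$, which exhibits $\phi_\alpha$ as $\cc$-linear. Since both maps are $\rr$-linear it suffices to verify agreement on a real basis, and the one point needing care is that $\langle f(a_j),\, P_{jj}(Y) f(a_j)\rangle$ is genuinely an $\rr$-basis: this uses $P(Y) \in \mathcal H_g$, whose positive-definite imaginary part makes each diagonal entry $P_{jj}(Y)$ non-real, so that $f(a_j)$ and $P_{jj}(Y) f(a_j)$ are $\rr$-independent. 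Agreement on $f(a_j)$ is immediate, and on $f(b_j) = P_{jj}(Y) f(a_j)$ it follows from $\cc$-linearity of $\psi_\alpha$ together with $\phi_\alpha(f(b_j)) = h_j(\alpha) f(b_j)$. Thus every $\phi_\alpha$ is $\cc$-linear and $\rho$ extends to real multiplication. The whole argument is light; the only genuinely content-bearing step is the distinctness of the places used to annihilate the off-diagonal periods, with the positive-definiteness of $\operatorname{Im} P(Y)$ serving as the mild technical point in the converse.
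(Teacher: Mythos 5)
Your proof is correct and follows essentially the same route as the paper's: both reduce to the $\cc$-linearity of $f \circ \rho(\alpha) \circ f^{-1}$, use the relation $f(b_j) = \sum_k P_{jk}(Y) f(a_k)$, and invoke the distinctness of the places $h_1,\dots,h_g$ to kill the off-diagonal entries. The paper packages the coefficient comparison as the statement that $P(Y)$ commutes with the diagonal matrix $h(\alpha)$, which is exactly your entry-wise computation.
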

\begin{proof}
From $\int_{a_j} \omega_k(Y) = \delta_{jk}$ and $P_{jk}(Y) = P_{kj}(Y)
= \int_{b_j} \omega_k(Y)$ we see that the map $f : H_1(Y,\rr) \to
\Omega(Y)^*$ of Equation \ref{eqn:HinO} satisfies $f(b_j) =
\sum_{k=1}^g P_{jk}(Y) \cdot f(a_k)$.  In matrix--vector notation, we
have
\begin{equation}
\label{eqn:Pandf}
(f(b_1),f(b_2),\dots,f(b_g)) = P(Y) \cdot (f(a_1),f(a_2),\dots,f(a_g)).
\end{equation}
For $\alpha \in \ord$, let $h(\alpha)$ be the $g \times g$ diagonal
matrix with diagonal entries $(h_1(\alpha),\dots,h_g(\alpha))$.  From
Equation \ref{eqn:eigenbasis}, the map $T(\alpha) = f \circ
\rho(\alpha) \circ f^{-1}$ extends $\cc$-linearly to $\Omega(Y)^*$ if
and only if the matrix for $T(\alpha)$ is $h(\alpha)$ with respect to
both the basis $\left<f(a_1),\dots,f(a_g) \right>$ and the basis
$\left< f(b_1),\dots,f(b_g) \right>$.  From Equation \ref{eqn:Pandf}
this happens if and only if $P(Y)$ commutes with $h(\alpha)$.  Since
the embeddings $h_1,\dots,h_g : \ord \to \rr$ are pairwise distinct,
$P(Y)$ commutes with $h(\alpha)$ for every $\alpha \in \ord$ if and
only if $P(Y)$ is diagonal.
\end{proof}

\paragraph{Eigenforms for real multiplication.}  
For $Y \in \teich(S,\rho)$ and $\iota$ satisfying $\rho_r \circ \iota
= \rho$, we saw in the proof of Proposition \ref{prop:diagperRM} that
the matrix for $\rho_a(\iota(\alpha))$ with respect to the basis
$\left<f(a_1),\dots,f(a_g)\right>$ for $\Omega(Y)^*$ is the diagonal
matrix $h(\alpha)$.  Since this basis is dual to the basis
$\left<\omega_1(Y),\dots,\omega_g(Y)\right>$ for $\Omega(Y)$, we see
that $\rho_a^*(\iota(\alpha)) \in \End(\Omega(Y))$ stabilizes
$\omega_i(Y)$ up to scale.  We record this fact in the following
proposition.
\begin{prop}
\label{prop:eigenforms}
For $Y \in \teich(S,\rho)$ and $\alpha \in \ord$, we have that $\rho_a^*(\alpha) \omega_i(Y) = h_i(\alpha) \omega_i(Y)$.
\end{prop}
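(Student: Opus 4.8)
The plan is simply to record, in dual form, the computation already carried out in the proof of Proposition \ref{prop:diagperRM}; here I read $\rho_a^*(\alpha)$ as $\rho_a^*(\iota(\alpha))$ for the fixed $\iota$ with $\rho_r \circ \iota = \rho$ supplied by $Y \in \teich(S,\rho)$. First I would recall from that proof that, because $P(Y)$ is diagonal, the $\cc$-linear extension of $f \circ \rho(\alpha) \circ f^{-1}$ to $\Omega(Y)^*$ has matrix $h(\alpha) = \operatorname{diag}(h_1(\alpha), \dots, h_g(\alpha))$ in the basis $\left< f(a_1), \dots, f(a_g) \right>$. Combining the definition of the rational representation (Equation \ref{eqn:ratrep}) with the relation $\rho_r \circ \iota = \rho$ (Equation \ref{eqn:defextendrho}), this $\cc$-linear map is exactly $\rho_a(\iota(\alpha))$, so $\rho_a(\iota(\alpha)) f(a_k) = h_k(\alpha) f(a_k)$ for every $k$.

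Next I would pin down the duality between the two bases in play. The forms $\omega_k(Y)$ were defined in Equation \ref{eqn:defpermap} by $\int_{a_j} \omega_k(Y) = \delta_{jk}$, which is to say $f(a_j)(\omega_k(Y)) = \delta_{jk}$; thus $\left< f(a_1), \dots, f(a_g)\right>$ is precisely the basis of $\Omega(Y)^*$ dual to $\left<\omega_1(Y), \dots, \omega_g(Y)\right>$. Since $\rho_a^*$ is, by definition, the representation on $\Omega(Y)$ dual to $\rho_a$, the matrix of $\rho_a^*(\iota(\alpha))$ in the basis $\left<\omega_1(Y), \dots, \omega_g(Y)\right>$ is the transpose of the matrix of $\rho_a(\iota(\alpha))$ in the dual basis $\left< f(a_1), \dots, f(a_g)\right>$, namely $h(\alpha)^T$. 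As $h(\alpha)$ is diagonal, $h(\alpha)^T = h(\alpha)$, and reading off the $i$-th column gives $\rho_a^*(\iota(\alpha))\,\omega_i(Y) = h_i(\alpha)\,\omega_i(Y)$, which is the assertion.

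I expect no serious obstacle: the substantive content was already isolated inside the proof of Proposition \ref{prop:diagperRM}, and the remaining step is the trivial observation that transposing a diagonal matrix leaves it unchanged. The single point that genuinely needs care is the convention attached to $\rho_a^*$. One must confirm that the phrase ``dual to $\rho_a$'' refers to the transpose (pullback-of-one-forms) action $(\rho_a^*(R)\omega)(\phi) = \omega(\rho_a(R)\phi)$ rather than the contragredient $R \mapsto \rho_a(R)^{-T}$; only the former yields the eigenvalue $h_i(\alpha)$ appearing in the statement, since the latter would give $h_i(\alpha)^{-1}$, and the transpose is the correct notion because endomorphisms in $\End(\jac(Y))$ need not be invertible. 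With this convention fixed the argument is exactly as above.
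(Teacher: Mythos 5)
Your argument is exactly the paper's: the paper derives the proposition from the observation, made in the proof of Proposition \ref{prop:diagperRM}, that $\rho_a(\iota(\alpha))$ has diagonal matrix $h(\alpha)$ in the basis $\left<f(a_1),\dots,f(a_g)\right>$, which is dual to $\left<\omega_1(Y),\dots,\omega_g(Y)\right>$, so that $\rho_a^*(\iota(\alpha))$ stabilizes each $\omega_i(Y)$ with eigenvalue $h_i(\alpha)$. Your additional remark pinning down the transpose (rather than contragredient) convention for the dual representation is a correct and sensible clarification but does not change the route.
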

\noindent In light of Proposition \ref{prop:eigenforms}, we call the
non-zero scalar multiples of $\omega_i(Y)$ the {\em $h_i$-eigenforms
  for $\ord$}.

\paragraph{Moduli of abelian varieties.}  
Now consider the homomorphism $M : \PSp(H_1(S,\rr)) \to
\PSp_{2g}(\rr)$ sending a projective symplectic automorphism of
$H_1(S,\rr)$ to its matrix with respect to $U$.  There is an action of
$\PSp_{2g}(\rr)$ on $\mathcal H_g$ by holomorphic automorphisms via
generalized M\"obius transformations such that, for $h \in \Mod(S)$
inducing $h_* \in \End(H_1(S,\zz))$, we have
\begin{equation}
\label{eqn:modperiods}
M(h_*) \cdot P(Y) = P(h \cdot Y).
\end{equation}
We conclude that the period map $P : \teich(S) \to \mathcal H_g$
covers a holomorphic map
\begin{equation}
\label{eqn:MgtoAg} \jac : \M_g \to A_g = \mathcal H_g / \Gamma_\zz \mbox{ where } \Gamma_\zz = M(\PSp(H_1(S,\zz))).
\end{equation}
We also call this map the period map and denote it by $\jac$ since the
space $A_g$ has a natural interpretation as a moduli space of
principally polarized abelian varieties so that $\jac$ is simply the
map sending a Riemann surface to its Jacobian.

\paragraph{Hilbert modular varieties.}  
Let $\Delta_g$ denote the collection of diagonal matrices in $\mathcal
H_g$ and let $\PSp(H_1(S,\zz),\rho)$ denote the subgroup of
$\PSp(H_1(S,\zz))$ represented by symplectic automorphisms commuting
with $\rho(\alpha)$ for each $\alpha \in \ord$.  The group
$\Gamma_\rho = M(\PSp(H_1(S,\zz),\rho))$ consists of matrices whose $g
\times g$ blocks are diagonal.  Consequently, $\Gamma_\rho$ preserves
$\Delta_g$ and, by Proposition \ref{prop:diagperRM}, the map
$\teich(S,\rho) \to A_g$ covered by the period map $P$ factors through
the orbifold
\begin{equation}
\label{eqn:defHMV}
X_\rho = \Delta_g / \Gamma_\rho.
\end{equation}
The space $X_\rho$ has a natural interpretation as a moduli space of
abelian varieties with real multiplication.  Each of the complex
orbifolds $\M_g$, $A_g$ and $X_\rho$ can be given the structure of an
algebraic variety so that the map in the period map $\jac$ and the map
$X_\rho \to A_g$ covered by the inclusion $\Delta_g \to \mathcal H_g$
are algebraic.  The variety $X_\rho$ is called a {\em Hilbert modular
  variety}.

\paragraph{Tangent and cotangent space to $\teich(S)$.}  
For $Y \in \teich(S)$, let $B(Y)$ denote the vector space of
$L^\infty$-Beltrami differentials on $Y$.  The measurable Riemann
mapping theorem can be used to give a marked family over the unit ball
$B^1(Y)$ in $B(Y)$ and construct a holomorphic surjection $\phi:
B^1(Y) \to \teich(S)$ with $\phi(0) = Y$.  There is a pairing between
$B(Y)$ and the space of holomorphic quadratic differentials $Q(Y)$ on
$Y$ given by
\begin{equation}
\label{eqn:BYQYpair}
B(Y) \times Q(Y) \to \cc \mbox{ where }(\mu,q) \mapsto \int_Y \mu \cdot q.
\end{equation}
Now let $Q(Y)^\perp \subset B(Y)$ be the vector subspace consisting of
Beltrami differentials annihilating every quadratic differential under
the pairing in Equation \ref{eqn:BYQYpair}.  By Teichm\"uller theory,
the space $Q(Y)^\perp$ is closed, has finite codimension and is equal
to the kernel of $d\phi_0$.  The tangent space $T_Y \teich(S)$ is
isomorphic to $B(Y) / Q(Y)^\perp$ and the pairing in Equation
\ref{eqn:BYQYpair} covers a pairing between $T_Y \teich(S)$ and $Q(Y)$
giving an isomorphism
\begin{equation}
\label{eqn:coTYQY}
T_Y^* \teich(S) \cong Q(Y).
\end{equation}
The pairing in Equation \ref{eqn:BYQYpair} and the isomorphism in
Equation \ref{eqn:coTYQY} are $\Mod(S)$-equivariant, and they give
rise to a pairing between $Q(Y)$ and the orbifold tangent space $T_Y
\M_g$.

\paragraph{Ahlfors's variational formula and eigenform products.}  
Ahlfors's variational formula expresses the derivative of the period
map in terms of quadratic differentials.
\begin{thm}[Ahlfors, \cite{ahlfors:complexanalytic}] 
For any $Y \in \teich(S)$, the derivative of the
$(j,k)^{\text{\tiny{th}}}$ coefficient of the period map is the
quadratic differential
\begin{equation}
\label{eqn:ahlforsvf}
d\left( P_{jk} \right)_Y = \omega_j(Y) \cdot \omega_k(Y).
\end{equation}
\end{thm}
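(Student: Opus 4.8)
The plan is to establish Equation \ref{eqn:ahlforsvf} by computing, for each Beltrami differential $\mu \in B(Y)$, the directional derivative of $P_{jk}$ along the path $t \mapsto Y_t := \phi(t\mu)$ furnished by the map $\phi : B^1(Y) \to \teich(S)$, and then matching this against the pairing $(\mu,q) \mapsto \int_Y \mu\cdot q$ of Equation \ref{eqn:BYQYpair}. Since $T_Y^*\teich(S) \cong Q(Y)$ via Equation \ref{eqn:coTYQY} and $T_Y\teich(S) \cong B(Y)/Q(Y)^\perp$, it is enough to show that $\frac{d}{dt}\big|_{t=0} P_{jk}(Y_t) = \int_Y \mu\cdot\big(\omega_j(Y)\,\omega_k(Y)\big)$ for every $\mu$; the quadratic differential $\omega_j(Y)\omega_k(Y)$ is then forced to represent the cotangent vector $d(P_{jk})_Y$, which is exactly the assertion.

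First I would record the first-order variation of the normalized holomorphic forms. Solving the Beltrami equation $\partial_{\bar z} f = t\mu\,\partial_z f$ produces a quasiconformal coordinate $\zeta = f^{t\mu}$ for $Y_t$, and a holomorphic form $h(\zeta)\,d\zeta$ on $Y_t$ pulls back to $Y$, in a local coordinate $z$, as a form of the shape $u_t\,(dz + t\mu\,d\bar z)$, because $d\zeta = \zeta_z\,(dz + t\mu\,d\bar z)$. Writing $\omega_k^{(t)}$ for the pullback of $\omega_k(Y_t)$ under $f^{t\mu}$, this shape shows two things at once: each $\omega_k^{(t)}$ is a closed $1$-form on the fixed surface whose $a_l$- and $b_l$-periods are $\delta_{kl}$ and $P_{lk}(Y_t)$ respectively (the quasiconformal map is homotopic to the identity through the marking, so it fixes the homology basis $U$), and the $(0,1)$-part of $\dot\omega_k := \frac{d}{dt}\big|_{0}\omega_k^{(t)}$ is $\mu\,\omega_k(Y)\,d\bar z$, since $u_0$ is the coefficient of $\omega_k(Y)$. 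In particular $\dot\omega_k$ is closed with $\int_{a_l}\dot\omega_k = 0$ and $\int_{b_l}\dot\omega_k = \frac{d}{dt}\big|_{0}P_{lk}(Y_t)$.

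The computation would then be finished by the Riemann bilinear relations. Applying them to the closed forms $\omega_j(Y)$, with periods $\delta_{jl}$ and $P_{jl}(Y)$, and $\dot\omega_k$, with periods $0$ and $\frac{d}{dt}\big|_{0}P_{lk}(Y_t)$, all cross terms drop and one obtains
\[ \frac{d}{dt}\Big|_{0} P_{jk}(Y_t) = \int_Y \omega_j(Y)\wedge\dot\omega_k. \]
Because $\omega_j(Y)$ has type $(1,0)$, only the $(0,1)$-part of $\dot\omega_k$ survives the wedge, so this integral equals $\int_Y \omega_j(Y)\,\omega_k(Y)\,\mu\;dz\wedge d\bar z$, which is precisely $\int_Y \mu\cdot\big(\omega_j(Y)\omega_k(Y)\big)$ under the normalization of Equation \ref{eqn:BYQYpair}. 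As this holds for every $\mu$, the cotangent vector $d(P_{jk})_Y$ is represented by $\omega_j(Y)\omega_k(Y)$, as claimed. I would note that the symmetry of $\omega_j\omega_k$ in $j,k$ is a consistency check, matching the symmetry of $P(Y) \in \mathcal H_g$.

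I expect the main obstacle to be analytic rather than formal: justifying that $t\mapsto\omega_k^{(t)}$ is differentiable at $t=0$ with the stated $(0,1)$-part, and that the bilinear relations legitimately apply to $\dot\omega_k$ despite the merely almost-everywhere differentiability of quasiconformal maps. The first point rests on the holomorphic dependence of solutions of the Beltrami equation on the parameter $t$ (Ahlfors--Bers), which is also what makes $\phi$ holomorphic; the second is handled by representing each de Rham class by a genuine smooth closed form, so that the bilinear relation is invoked as a purely cohomological identity insensitive to the regularity of the underlying map. Once these standard facts are granted, the remainder is the one-line type count of the preceding paragraph.
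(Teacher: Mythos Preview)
The paper does not give its own proof of this statement: it is quoted as a classical result of Ahlfors with a reference to \cite{ahlfors:complexanalytic} and then immediately applied. So there is no in-paper argument to compare against.

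Your argument is the standard derivation (often called Rauch's variational formula) and is correct in outline: pull the normalized holomorphic basis back through the quasiconformal map to get closed $1$-forms on the fixed surface with the right periods, differentiate to obtain $\dot\omega_k$ with $(0,1)$-part $\mu\,\omega_k$, apply the Riemann bilinear relations to $\omega_j$ and $\dot\omega_k$, and use the type count to reduce the resulting surface integral to the Beltrami--quadratic-differential pairing. The analytic caveats you flag (differentiability in $t$ via Ahlfors--Bers, and applying bilinear relations at the level of de Rham classes) are exactly the right ones, and both are standard.

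One small point worth making explicit: the step ``$\int_Y \omega_j\wedge\dot\omega_k = \int_Y \mu\cdot(\omega_j\omega_k)$'' hides a universal constant coming from $dz\wedge d\bar z = -2i\,|dz|^2$, and whether this constant is $1$ depends on exactly how the pairing of Equation~\ref{eqn:BYQYpair} is normalized. You hedge with ``under the normalization of Equation~\ref{eqn:BYQYpair},'' which is fine, but since the paper's downstream use (Proposition~\ref{prop:eformproducts}) only needs the \emph{direction} of $d(P_{jk})_Y$ in $Q(Y)$, any nonzero scalar discrepancy is harmless for its purposes. If you want the identity literally as stated, you should pin down the convention.
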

\noindent Combining Equation \ref{eqn:ahlforsvf} with our
characterization of the period matrices of $Y \in \teich(S,\rho)$
yields the following proposition, implicit in \cite[proof of Theorem 7.5]{mcmullen:billiards}. 
\begin{prop}
\label{prop:eformproducts}
Suppose $B$ is a smooth manifold and $g : B \rightarrow
\teich(S,\rho)$ is a smooth map.  For each $j \neq k$ and $b \in B$,
the quadratic differential
\[ q_{jk}(b) = \omega_j(g(b)) \cdot \omega_k(g(b)) \in Q(g(b)) \]
annihilates the image of $dg_{b}$ in $T_{g(b)}\teich(S)$.
\end{prop}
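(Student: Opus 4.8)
The plan is to reduce the statement to the identical vanishing of the off-diagonal period coefficients along the family $g$, and then to interpret the derivative of that vanishing via Ahlfors's variational formula. First I would use the hypothesis that $g$ takes values in $\teich(S,\rho)$: by Proposition \ref{prop:diagperRM}, the period matrix $P(g(b))$ is diagonal for every $b \in B$. Equivalently, for each fixed pair $j \neq k$ the composite
\[ P_{jk} \circ g : B \to \cc \]
is the zero function. This is the single place where the real multiplication hypothesis enters, and it is exactly what Proposition \ref{prop:diagperRM} was set up to provide.

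Next I would differentiate this identity. Since $\teich(S)$ is a complex manifold, $P$ is holomorphic and $g$ is smooth, the chain rule gives, for each $j \neq k$ and each $b \in B$,
\[ d\left( P_{jk} \circ g \right)_b = d\left( P_{jk} \right)_{g(b)} \circ dg_b = 0. \]
Under the isomorphism $T^*_{g(b)}\teich(S) \cong Q(g(b))$ of Equation \ref{eqn:coTYQY}, the cotangent vector $d(P_{jk})_{g(b)}$ corresponds, by Ahlfors's formula (Equation \ref{eqn:ahlforsvf}), to the quadratic differential $\omega_j(g(b)) \cdot \omega_k(g(b)) = q_{jk}(b)$. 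Tracing through the definitions, the value of the pairing between $q_{jk}(b) \in Q(g(b))$ and a tangent vector $dg_b(v) \in T_{g(b)}\teich(S)$ coming from Equation \ref{eqn:BYQYpair} is precisely $d(P_{jk} \circ g)_b(v)$, which the previous display shows to be zero for every $v \in T_b B$.

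Therefore $q_{jk}(b)$ annihilates the entire image of $dg_b$, which is the assertion. The argument is essentially formal once Proposition \ref{prop:diagperRM}, Ahlfors's formula, and the duality of Equation \ref{eqn:coTYQY} are combined. The only step demanding genuine care — and what I regard as the main obstacle — is keeping the dictionary consistent: one must verify that ``$q_{jk}(b)$ annihilates the image of $dg_b$'' in the sense of the pairing of Equation \ref{eqn:BYQYpair} is literally the same condition as the vanishing of $d(P_{jk} \circ g)_b$, so that the differential $d(P_{jk})$, regarded as a cotangent vector, is correctly identified with the quadratic differential it represents under Equation \ref{eqn:coTYQY}. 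Beyond this bookkeeping I anticipate no difficulty, and Theorem \ref{thm:eformproducts} then follows by specializing to the family $Y$ of Equation \ref{eqn:Ywm}.
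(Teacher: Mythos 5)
Your proposal is correct and follows exactly the paper's own argument: apply Proposition \ref{prop:diagperRM} to conclude $P_{jk}\circ g \equiv 0$ for $j\neq k$, differentiate via the chain rule, and identify $d(P_{jk})_{g(b)}$ with $q_{jk}(b)$ through Ahlfors's variational formula. The bookkeeping point you flag about matching the pairing of Equation \ref{eqn:BYQYpair} with the vanishing of $d(P_{jk}\circ g)_b$ is handled implicitly in the paper and poses no real difficulty.
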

\begin{proof}
In light of Proposition \ref{prop:diagperRM}, the image of $P \circ g$
is contained within the set $\Delta_g$ of diagonal matrices in
$\mathcal H_g$.  For $j \neq k$, the composition $P_{jk} \circ g : B
\rightarrow \cc$ is identically zero.  The differential $dP_{jk}$
annihilates the image of $dg_{b}$ by the chain rule and is equal to
$q_{jk}(b)$ by Ahlfors's variational formula.
\end{proof}
\noindent Theorem \ref{thm:eformproducts} is a special case of
Proposition \ref{prop:eformproducts}.
\begin{proof}[Proof of Theorem \ref{thm:eformproducts}]
Fix $\tau$ in the domain for the map $Y : \hh \times \hh \to \M_2$
defined in Equation \ref{eqn:Ywm} and let $B$ be a neighborhood of
$\tau$ on which $Y$ lifts to a map $\widetilde Y : B \to \teich(S)$
for a genus two surface $S$.  Identify $H_1(S,\zz)$ with
$H_1(\widetilde{Y}(\tau),\zz)$ via the marking and with the lattice
$\Pi(\tau) \cdot \zz^4 = H_1(B(\tau),\zz)$ (Equation \ref{eqn:Btau})
via the Abel-Jacobi map $\widetilde{Y}(\tau) \to B(\tau)$ and let
$\rho : \ord_D \to \End(H_1(S,\zz))$ be the proper, self-adjoint
embedding with $\rho(\alpha)$ equal to multiplication by the diagonal
matrix $\operatorname{Diag}\left( h_1(\alpha),h_2(\alpha) \right)$ on
$\Pi(\tau) \cdot \zz^4$.  Clearly, the lift $\widetilde Y$ maps $B$
into $\teich(S,\rho)$ and Proposition \ref{prop:eformproducts} shows
that the product $\omega_1(\tau) \cdot \omega_2(\tau)$ annihilates the
image of $d\widetilde{Y}_\tau$.
\end{proof}

\paragraph{Genus two Jacobians with real multiplication.}  
For typical pairs $(S,\rho)$, we know little else about the space
$\teich(S,\rho)$ including whether or not $\teich(S,\rho)$ is empty.
For the remainder of this section, we impose the additional assumption
that $g = 2$ so that we can say more.

Let $D$ be the discriminant of $\ord$.  The first special feature when
$g = 2$ is that the order $\ord$ is determined by $D$ and is
isomorphic to $\ord_D = \zz\left[ \frac{D+\sqrt{D}}{2} \right]$.  The
discriminants of real quadratic orders are precisely the integers
$D>0$ and congruent to $0$ or $1 \bmod 4$, and the order of
discriminant $D$ is an order in a real quadratic field if and only if
$D$ is not a square.\footnote{Rings with square discriminants
  correspond to orders in $\zz \times \zz$ and can in principle be
  treated similarly to those we consider in this paper.  Since
  equations for $X_{d^2}$ do not appear in \cite{elkieskumar:hms}, we
  will not consider such rings in this paper.} The discriminant $D$ is
{\em fundamental} and the order $\ord_D$ is {\em maximal} if $\ord_D$
is not contained in a larger order in $\ord_D \otimes \qq$.

The second special feature when $g = 2$ is that, for each real
quadratic order $\ord$, there is a unique proper, self-adjoint
embedding $\rho : \ord \to H_1(S,\zz)$ up to conjugation by elements
of $\Sp(H_1(S,\zz))$ (\cite{runge:endomorphism}, Theorem 2).  Since
$\Mod(S) \to \Sp(H_1(S,\zz))$ is onto, the spaces $\teich(S,\rho)$ and
$X_\rho$ and the maps to $\teich(S,\rho) \to \M_2$ and $X_\rho \to
A_2$ are determined by $D$ up to isomorphism.  The Hilbert modular
variety $X_\rho$ is isomorphic to the {\em Hilbert modular surface of
  discriminant $D$}
\begin{equation}
\label{eqn:XD}
\begin{array}{c}
X_D = \hh \times \hh / \PSL(\ord_D \oplus \ord_D^\vee)\mbox{ where } \ord_D^\vee = \frac{1}{\sqrt{D}} \cdot \ord_D \mbox{ and} \\
 \PSL(\ord_D \oplus \ord_D^\vee) = \left\{ \begin{pmatrix} a & b \\ c & d \end{pmatrix} \in \PSL_2(K) : \begin{array}{c} ad -bc = 1, a,d \in \ord_D \\ b \cdot \ord_D^\vee \subset \ord_D \mbox{ and } c \cdot \ord_D \subset \ord_D^\vee \end{array} \right\}.
\end{array}
\end{equation}
The two places $h_1, h_2 : \ord_D \to \rr$ give two homomorphisms
$\PSL(\ord_D \oplus \ord_D^\vee) \to \PSL_2(\rr)$ which we also denote
by $h_1$ and $h_2$.  The action of $M \in
\PSL(\ord_D\oplus\ord_D^\vee)$ on $\hh \times \hh$ is the ordinary
action of $h_1(M)$ by M\"obius transformation the first coordinate and
by $h_2(M)$ on the second.

The third special feature when $g=2$ is that the algebraic map $\jac :
\M_2 \to A_2$ is birational with rational inverse $\jac^{-1}$.
Composing $\jac^{-1}$ with the natural map $X_D \to A_2$ gives a
rational inverse period map
\begin{equation}
\label{eqn:invjac}
 \jac_D^{-1} : X_D \to \M_2
\end{equation}
whose image is covered by $\teich(S,\rho)$.  

\section{Quadratic differentials and residues}
\label{sec:algpairing}
To make use of Proposition \ref{prop:eformproducts} for a complex
structure $Y \in \teich(S,\rho)$ represented by an algebraic curve, we
will need an algebraic formula for the pairing between $Q(Y)$ and
$T_Y\teich(S)$ described in Equation \ref{eqn:coTYQY}.  When the genus
of $S$ is two, the curve $Y$ is biholomorphic to a hyperelliptic curve
defined by an equation of the form $z^2 = f_a(w)$ where
\begin{equation}
\label{eqn:fa}
f_a(w) = w^5+a_4 w^4+a_3 w^3+a_2 w^2 + a_1 w + a_0 
\end{equation}
for some $a = (a_0,\dots,a_4) \in \cc^5$.  There is a well known
formula for the pairing between $Q(Y)$ and $T_Y\teich(S)$ for such
curves involving residues and the roots of $f_a(w)$.  We recall this
formula in Proposition \ref{prop:algpairingrts}.  From Proposition
\ref{prop:algpairingrts}, we deduce Theorem \ref{thm:algpairingcoeffs}
which gives a formula in terms of the coefficients of $f_a(w)$.
Theorem \ref{thm:algpairingcoeffs} is useful from a computational
standpoint since the field $\qq(a_0,\dots,a_4)$ is typically simpler
than the splitting field of $f_a(w)$ over $\qq$.

\paragraph{Pairing and polynomial coefficients.} 
For any $a=(a_0,\dots,a_4) \in \cc^5$, let $f_a(w)$ be the polynomial
defined in Equation \ref{eqn:fa} and set
\begin{equation}
\label{eqn:V}
V = \left\{ a \in \cc^5 : \Disc(f_a(w)) \neq 0\right\}.
\end{equation}
Consider the holomorphic and algebraic map
\[ Y : V \to \M_2 \mbox{ where $Y(a)$ is birational to the curve defined by $z^2 = f_a(w)$.} \]
While the universal curve over $\M_2$ is not a fiber bundle, its
pullback to $V$ is a fiber bundle. By Teichm\"uller theory, the map
$Y$ lifts locally to $\teich(S)$ and, for any $a \in V$, the
derivative $(dY)_a$ gives rise to a pairing
\begin{equation}
\label{eqn:QYTaV}
 Q(Y(a)) \times T_aV \to \cc.
\end{equation}
The tangent space $T_aV$ is naturally isomorphic to $\cc^5$ since $V$
is open in $\cc^5$ and we can identify the vector space $Q(Y(a))$ with
$\cc^3$ by associating $x =(x_0,x_1,x_2)$ with the quadratic
differential
\begin{equation}
\label{eqn:qx}
q_x = Q_x(w) \cdot \frac{dw^2}{f_{a}(w)} \mbox{ where $Q_x(w) = x_0+x_1w+x_2w^2$}.
\end{equation}
Our main goal for this section is to establish a formula for the
pairing in Equation \ref{eqn:QYTaV} in these coordinates.  We start
with the following proposition.
\begin{prop}
For any integer $k \geq 2$, the function
\begin{equation}
\label{eqn:Macoeff}
M_k(a) = \sum_{\substack{r \in Z(f_a)}} \frac{r^{k}}{\left( r f_a'(r) \right)^2} \mbox{ where } Z(f_a) = \left\{ r \in \cc : f_a(r) = 0 \right\}
\end{equation}
is a rational function in $\cc\left( a_0,\dots,a_4 \right)$ and the
product $\Disc\left( f_a(w) \right) \cdot M_k(a)$ is in
$\cc[a_0,\dots,a_4]$.
\end{prop}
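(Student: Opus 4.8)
We must show that
\[ M_k(a) = \sum_{r \in Z(f_a)} \frac{r^k}{\bigl(r f_a'(r)\bigr)^2} \]
is a rational function in $a_0,\dots,a_4$, and that $\Disc(f_a) \cdot M_k(a)$ is actually a polynomial in the coefficients.

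The plan is to recognize $M_k(a)$ as a \emph{symmetric function of the roots} of $f_a$ and then invoke the theory of symmetric polynomials together with a control on denominators. Here is how I would carry this out.

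\textbf{Step 1: Symmetry and rationality.} First I would observe that the sum defining $M_k(a)$ is taken over all roots $r$ of $f_a$, and the summand depends on $r$ in a way that is symmetric under any permutation of the roots. Writing $f_a(w) = \prod_{i=1}^5 (w - r_i)$, we have $f_a'(r_i) = \prod_{j \neq i}(r_i - r_j)$, so that
\[ M_k(a) = \sum_{i=1}^5 \frac{r_i^k}{r_i^2 \prod_{j\neq i}(r_i-r_j)^2}. \]
This is manifestly invariant under the symmetric group $S_5$ acting by permuting $r_1,\dots,r_5$. Since the $r_i$ are algebraic over $\qq(a_0,\dots,a_4)$ with the $a_i$ (up to sign) the elementary symmetric functions, the fundamental theorem of symmetric functions guarantees that any symmetric rational function of the roots lies in the field $\cc(a_0,\dots,a_4)$. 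On $V$ the discriminant is nonzero, so the factors $(r_i - r_j)$ never vanish and the expression is a well-defined symmetric rational function of the roots; hence $M_k(a) \in \cc(a_0,\dots,a_4)$. This handles the first assertion.

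\textbf{Step 2: Clearing the denominator.} The subtler point is that $\Disc(f_a) \cdot M_k(a)$ is a \emph{polynomial}. The natural common denominator of the summands is $\bigl(\prod_i r_i^2\bigr) \cdot \prod_{i<j}(r_i-r_j)^2$, and we recognize $\prod_{i<j}(r_i-r_j)^2 = \Disc(f_a)$ while $\prod_i r_i = \pm a_0$. So after multiplying by $\Disc(f_a)$, the remaining denominator involves only a power of $a_0$. I would argue that this spurious $a_0$ power cannot actually appear: $M_k(a)$ is regular wherever the roots are distinct, including along the locus $a_0 = 0$ (one root goes to zero, but for $k \geq 2$ the factor $r_i^k$ in the numerator kills the $r_i^2$ in the denominator since $r_i^k/r_i^2 = r_i^{k-2}$ stays finite). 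Thus $\Disc(f_a)\cdot M_k(a)$, being a rational function in $\cc(a_0,\dots,a_4)$ that is symmetric, $S_5$-invariant, and has no pole along any coordinate hyperplane, must be a polynomial. The cleanest way to make this rigorous is to rewrite the summand as $r_i^{k-2}/\prod_{j\neq i}(r_i-r_j)^2$, whose common denominator is exactly $\Disc(f_a)$ with no factor of $a_0$ at all; the numerator of the combined sum is then a symmetric polynomial in the roots, hence a polynomial in $a_0,\dots,a_4$.

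\textbf{The main obstacle.} The genuine difficulty is \emph{Step 2}, the polynomiality: one must be careful that multiplying by $\Disc(f_a)$ exactly clears the denominator and leaves no leftover factor of $a_0$. The rewriting $r_i^k/(r_i f_a'(r_i))^2 = r_i^{k-2}/f_a'(r_i)^2$ (valid since $r_i \neq 0$ on the generic locus, but the simplified form extends across $a_0 = 0$) is what makes this transparent, and the constraint $k \geq 2$ is precisely what ensures the exponent $k-2$ is nonnegative so that no hidden pole at $a_0 = 0$ survives. Once the summand is in this reduced form, combining over a common denominator of $\Disc(f_a)$ and applying the fundamental theorem of symmetric functions to the resulting symmetric numerator finishes the proof.
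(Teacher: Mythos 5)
Your proof is correct and follows essentially the same route as the paper's: rationality via symmetry of the sum in the roots, and polynomiality of $\Disc(f_a)\cdot M_k(a)$ via the rewriting $r^k/(rf_a'(r))^2 = r^{k-2}/f_a'(r)^2$ together with the observation that $\Disc(f_a)/f_a'(r)^2 = \prod_{\ell<m,\ \ell,m\neq i}(r_\ell-r_m)^2$ is a polynomial in the roots (the paper phrases this as $\Disc(f_a(w))/f_a'(r)^2 = \Disc\left(f_a(w)/(w-r)\right)$). Your intermediate musing about poles along coordinate hyperplanes is dispensable, but you correctly identify the clean argument at the end, which is the one the paper uses.
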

\begin{proof} 
The function $M_k(a)$ is a rational function in $\cc\left(
a_0,\dots,a_4 \right)$ since the sum in Equation \ref{eqn:Macoeff} is
a symmetric rational function of the roots of $f_a(w)$.  That
$\Disc\left( f_a(w) \right) \cdot M_k(a)$ is a polynomial for $k \geq
2$ follows from the fact that $\Disc(f_a(w)) / f_a'(r)^2 = \Disc\left(
f_a(w)/(w-r) \right)$ for each $ r \in Z(f_a)$.
\end{proof}
\noindent We can now state our main theorem for this section.
\begin{thm}
\label{thm:algpairingcoeffs}
For any $a \in V$, $v \in T_aV$ and $x \in \cc^3$, the pairing between
the quadratic differential $q_x \in Q(Y(a))$ and $v$ is given by
\begin{equation} 
\label{eqn:algpairingcoeffs} q_x(v) = (2 \pi) \cdot x^T \cdot M(a) \cdot v
\end{equation}
where $M(a)$ is the $3 \times 5$-matrix with entries in
$\Disc(f_a(w))^{-1} \cdot \cc\left[ a_0,\dots,a_4 \right]$ whose
$(j,k)^{\text{\tiny{th}}}$ coefficient is the rational function
$(M(a))_{jk} = M_{j+k}(a)$ defined in Equation \ref{eqn:Macoeff}.
\end{thm}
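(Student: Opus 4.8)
The plan is to reduce the coefficient formula in Theorem \ref{thm:algpairingcoeffs} to the root formula in Proposition \ref{prop:algpairingrts}, which I assume is already established. The key observation is that once I know the pairing $q_x(v)$ in terms of residues at the roots of $f_a(w)$, I only need to rewrite that residue computation as a sum over $Z(f_a)$ and then recognize the resulting symmetric functions of the roots as the $M_k(a)$. First I would recall that a tangent vector $v = (v_0,\dots,v_4) \in T_aV \cong \cc^5$ corresponds to the infinitesimal deformation of $f_a(w)$ by $\sum_{k=0}^4 v_k w^k$, and that the standard residue formula for the Ahlfors pairing expresses $q_x(v)$ as a sum of residues of $Q_x(w) \cdot \bigl(\sum_k v_k w^k\bigr)$ against an appropriate density at the branch points, i.e.\ the roots $r$ of $f_a(w)$.

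The computational heart is therefore to evaluate, at each root $r \in Z(f_a)$, the local contribution and see that it produces a factor $1/(r f_a'(r))^2$ together with the appropriate power of $r$. Concretely, I expect the residue at a root $r$ to be governed by the local behavior of the hyperelliptic form $dw/z$ near the branch point, where $z^2 = f_a(w)$ and $z$ vanishes to order one in the local coordinate. Carrying out the residue calculation, the product $Q_x(w) \cdot w^j$ contributes a monomial $x_i w^{i+j}$, and after dividing by the square of the density $r f_a'(r)$ the summand over roots becomes
\[ x_i v_j \sum_{r \in Z(f_a)} \frac{r^{i+j}}{\left( r f_a'(r) \right)^2} = x_i v_j M_{i+j}(a). \]
Summing over $i \in \{0,1,2\}$ and $j \in \{0,1,2,3,4\}$ then assembles exactly the bilinear form $x^T M(a) v$ with $(M(a))_{jk} = M_{j+k}(a)$, and I would track the constant $2\pi$ through the residue theorem (the $2\pi i$ from residues combined with a factor of $-i$ or conjugation from the pairing, depending on conventions in Proposition \ref{prop:algpairingrts}).

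The main obstacle, and the step I would handle most carefully, is the bookkeeping of the degree mismatch and the behavior at infinity. The polynomial $f_a(w)$ has degree five, so $z^2 = f_a(w)$ has a single branch point at $w = \infty$ (the genus two hyperelliptic curve has six Weierstrass points: the five roots of $f_a$ together with the point at infinity). I need to confirm either that the point at infinity contributes nothing to the pairing for the quadratic differentials $q_x$ with $\deg Q_x \le 2$, or that its contribution is already absorbed into the symmetric-function identity. Since $q_x = Q_x(w)\,dw^2/f_a(w)$ is holomorphic precisely when $\deg Q_x \le 2$ (so that it has no pole at infinity on the curve), the three-dimensionality of $Q(Y(a))$ is matched by $x \in \cc^3$, and I expect the infinity contribution to vanish for exactly these forms. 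Establishing this cleanly — that the only surviving residues are at the finite roots — is where the care lies. Once that is settled, the fact that $\Disc(f_a(w)) \cdot M_k(a)$ is polynomial (from the preceding proposition) guarantees the entries of $M(a)$ lie in $\Disc(f_a(w))^{-1} \cdot \cc[a_0,\dots,a_4]$, completing the statement.
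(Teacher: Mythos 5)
Your overall strategy is the same as the paper's: both proofs take Proposition \ref{prop:algpairingrts} as the established input and deduce Theorem \ref{thm:algpairingcoeffs} by converting the root parametrization to the coefficient parametrization. Where you diverge is in how that conversion is carried out. The paper phrases it abstractly as a chain rule, $L_r = L_a \circ da_r$, which reduces everything to the single polynomial identity $N(r) = M(a(r)) \cdot da_r / \Disc(f_{a(r)}(w))$ (Equation \ref{eqn:NrMada}); it then declines to prove that identity conceptually and instead certifies it with a computer program in the auxiliary files. You propose to do by hand exactly what the paper outsources to the machine: implicitly differentiate $f_a(r_j)=0$ to get the root velocities $u_j = -V(r_j)/f_a'(r_j)$ from a coefficient perturbation $V(w)=\sum_k v_k w^k$, substitute into the residue sum of Proposition \ref{prop:algpairingrts}, and recognize the resulting symmetric functions $\sum_r Q_x(r)V(r)/f_a'(r)^2$ as the entries $M_{j+k}(a)$. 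This is the ``conceptual proof'' the paper alludes to but omits, and it is a legitimate, arguably preferable, route: it explains where the two factors of $f_a'(r)$ come from (one from the residue of $1/f_a(w)$ at a simple root, one from the root velocity) rather than burying that in a Gr\"obner-style verification.

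Two small cautions. First, your bookkeeping of the denominator is off by an index shift: the monomial $x_i w^i \cdot v_j w^j$ correctly contributes $\sum_r r^{i+j}/f_a'(r)^2$, which equals $M_{i+j+2}(a)$ in the normalization of Equation \ref{eqn:Macoeff} (where the extra $r^2$ in $(rf_a'(r))^2$ is absorbed by the $1$-based matrix indices $1\le j\le 3$, $1\le k\le 5$); your displayed identity writes $M_{i+j}(a)$ with $0$-based degrees, which is inconsistent with your own final assembly into $(M(a))_{jk}=M_{j+k}(a)$. The answer is right, but you should fix the convention. Second, the worry about the Weierstrass point at infinity is not a new issue at this stage: it is already disposed of in the proof of Proposition \ref{prop:algpairingrts} (the deformations there are chosen holomorphic near $w^{-1}(\infty)$, so the Stokes contribution from $C_\infty$ vanishes), and the passage from roots to coefficients is pure finite-dimensional linear algebra that cannot reintroduce it. Neither point is a gap in the argument, only in the write-up.
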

\noindent Many computer algebra systems will readily compute an
explicit formula for $M_k(a)$ for any particular $k$ and in the
auxiliary files we include an explicit formula for $M(a)$ (which
involves the polynomials $M_k(a)$ for $2 \leq k \leq 8$).  We will
prove Theorem \ref{thm:algpairingcoeffs} at the end of this section.

\paragraph{Pairing and roots of polynomials.}  
For any $r = (r_1,\dots,r_5) \in \cc^5$, let $a(r) =
(a_0(r),\dots,a_4(r))$ be the coefficients of the monic, degree five
polynomials with roots $r_i$ and define
\[ V^{rts} = \left\{ r \in \cc^5 : a(r) \in V \right\}. \]
The universal curve over $\M_2$ pulls back under $Y \circ a : V^{rts}
\to \M_2$ to a fiber bundle.  As in the previous paragraph,
Teichm\"uller theory gives rise to a natural pairing between
$T_rV^{rts}$ and $Q(Y(a(r)))$ for any $r \in V^{rts}$.  The tangent
space $T_rV^{rts}$ is naturally isomorphic to $\cc^5$ since $V^{rts}$
is open in $\cc^5$ and we can identify $Q(Y(a(r)))$ with $\cc^3$ via
Equation \ref{eqn:qx}.

Let $N(r)$ be the $3 \times 5$-matrix whose $(j,k)$-th coefficient is
the rational function
\begin{equation}
\label{eqn:Nr}
N(r)_{jk} = -\operatorname{Res}_{r_j} \left( \frac{w^{k-1}}{f_{a(r)}(w)} \right) = -r_j^{k-1} \cdot \prod_{m \neq j} \frac{1}{r_m-r_j} \mbox{ for $1 \leq j \leq 5$ and $1 \leq k \leq 3$.}
\end{equation}
Our next proposition expresses the pairing $Q(Y(a(r)))\times T_r
V^{rts} \to \cc$ in terms of $N(r)$.
\begin{prop}
\label{prop:algpairingrts}
For any $r \in V$, $u \in T_rV^{rts}$ and $x \in \cc^3$, the pairing
between the quadaratic differential $q_x \in Q(Y(a(r)))$ and $u$ is
given by
\begin{equation}
\label{eqn:algpairingrts}
q_x(u) = (2 \pi) \cdot x^T \cdot N(r) \cdot u.
\end{equation}
\end{prop}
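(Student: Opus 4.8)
The plan is to compute the pairing one root-direction at a time and then assemble the general formula by linearity. Since $V^{rts}$ is open in $\cc^5$, the tangent space $T_rV^{rts}\cong\cc^5$ is spanned by the coordinate vectors $e_1,\dots,e_5$, where $e_j$ is the infinitesimal motion of the single root $r_j$. Because the pairing of \ref{eqn:BYQYpair} is bilinear, it suffices to prove $q_x(e_j)=2\pi\sum_{k}x_{k-1}N(r)_{jk}$ for each $j$ and each $x\in\cc^3$; writing $u=\sum_j u_je_j$ then yields the stated identity $q_x(u)=2\pi\,x^TN(r)u$ of Proposition \ref{prop:algpairingrts}. Throughout I would use that the three differentials $q_x$ of \ref{eqn:qx} span $Q(Y(a(r)))$ as $x$ ranges over $\cc^3$: this is the standard description of holomorphic quadratic differentials on the genus two curve $z^2=f_{a(r)}(w)$, and one checks that $Q_x(w)\,dw^2/f_{a(r)}(w)$ extends holomorphically over the Weierstrass points and over the point at infinity precisely because $\deg Q_x\le 2$.

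First I would produce a Beltrami differential representing $e_j$. Moving the branch point $r_j$ to $r_j+t$ is realized on the $w$-sphere by the quasiconformal map $w\mapsto w+t\,\chi(w)$, where $\chi$ is a smooth cutoff equal to $1$ on a small disk $D_j$ about $r_j$ and supported in a slightly larger disk $D_j'$ containing no other root; to first order in $t$ its Beltrami coefficient is $\mu_j=(\partial\chi/\partial\bar w)\,d\bar w/dw$, supported in the annulus $D_j'\setminus D_j$. Pulling $\mu_j$ back through the degree two map $w\colon Y(a(r))\to\pp^1$ gives a Beltrami differential whose Kodaira--Spencer class is exactly the image of $e_j$ under the derivative of a local lift $\widetilde Y\colon V^{rts}\to\teich(S)$ of $r\mapsto Y(a(r))$; this is the statement that deforming the branch points of a hyperelliptic curve agrees with the Schiffer variations at the corresponding Weierstrass points. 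Consequently $q_x(e_j)=\int_{Y}\mu_j\cdot q_x$ in the sense of \ref{eqn:BYQYpair}.

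Next I would evaluate this integral by Stokes's theorem. Since $q_x=Q_x(w)\,dw^2/f_{a(r)}(w)$ and $\mu_j$ are both pulled back from $\pp^1$, the integral over $Y(a(r))$ equals twice the corresponding integral over $\pp^1$ (the double cover is unramified over the annular support of $\mu_j$). On the base the integrand is $\overline\partial\chi\wedge\omega_x$ with $\omega_x=Q_x(w)\,dw/f_{a(r)}(w)$, a meromorphic one-form holomorphic on the support of $\overline\partial\chi$; writing $\overline\partial\chi\wedge\omega_x=d(\chi\,\omega_x)$ there and applying Stokes to the annulus $D_j'\setminus D_j$, on whose outer boundary $\chi\equiv0$ and inner boundary $\chi\equiv1$, collapses the integral to $-\oint_{\partial D_j}\omega_x=-2\pi i\operatorname{Res}_{r_j}\omega_x=-2\pi i\,Q_x(r_j)/f_{a(r)}'(r_j)$.

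Finally I would assemble the constant. The factor $2$ from the double cover, the factor $-2\pi i$ from the residue, and the factor $(2i)^{-1}$ coming from the identity $d\bar w\wedge dw=2i\,dx\wedge dy$ in the area-form meaning of the pairing \ref{eqn:BYQYpair} multiply to the real constant $-2\pi$, giving $q_x(e_j)=-2\pi\,Q_x(r_j)/f_{a(r)}'(r_j)$. Expanding $Q_x(r_j)=\sum_{k=1}^3 x_{k-1}r_j^{k-1}$ and using $f_{a(r)}'(r_j)=\prod_{m\ne j}(r_j-r_m)=\prod_{m\ne j}(r_m-r_j)$, this is exactly $2\pi\sum_k x_{k-1}\bigl(-r_j^{k-1}\prod_{m\ne j}(r_m-r_j)^{-1}\bigr)=2\pi\sum_k x_{k-1}N(r)_{jk}$ with $N(r)_{jk}$ as in \ref{eqn:Nr}, completing the single-root case. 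I expect the main obstacle to be the second step: pinning down the Beltrami representative of the branch-point motion and verifying that its Kodaira--Spencer class matches the algebraic family, together with the careful bookkeeping of the three constants so that the real factor $2\pi$ emerges rather than a spurious power of $i$.
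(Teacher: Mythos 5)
Your proposal is correct and follows essentially the same route as the paper: both realize the motion of the branch points by an explicit quasiconformal map that translates $w$ near the moving root and is the identity near $\infty$, compute its Beltrami coefficient to first order in $t$, and evaluate the pairing via Stokes's theorem as a sum of residues, with the same bookkeeping of the constants $2$, $2\pi i$ and $i/2$ producing the real factor $2\pi$. The only cosmetic differences are that you move one root at a time and do the Stokes computation downstairs on the $w$-sphere (picking up the factor of $2$ from the unramified annulus), whereas the paper moves all roots simultaneously via a single family of diffeomorphisms $\Phi_t:Y(a(r))\to Y(a(r+tu))$ and integrates upstairs on $Y(a(r))$ (picking up the same factor from the loop $C_j$ winding twice about $r_j$).
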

\noindent Proposition \ref{prop:algpairingrts} is
well-known.\footnote{Proposition \ref{prop:algpairingrts} can be
  established from the general discussion of deformations of Riemann
  surfaces in \cite{mcmullen:navigating}; nearly identical statements
  appear in \cite{pilgrim:coursenotes} pg. 50 and
  \cite{hubbardschleicher:spider} Proposition 7.3.  Equation
  \ref{eqn:algpairingrts} differs from the equations in
  \cite{pilgrim:coursenotes} and \cite{hubbardschleicher:spider} by a
  constant factor arising from different definitions and the fact that
  our formula is in genus two.}  We include a proof for completeness.
\begin{proof}
Recall that the Riemann surface $Y(a)$ is birational to the algebraic
curve $z^2 = f_a(w)$.  Let $r = (r_1,\dots,r_5)$ and $u=
(u_1,\dots,u_5)$.  One can construct a smoothly varying family of
diffeomorphisms
\[ \Phi_t(w,z) = (W_t(w,z),Z_t(w,z)) : Y\left( a(r) \right) \rightarrow Y\left( a(r+tu) \right) \mbox{ for $t$ small} \]
such that $W_t(w,z) = w+t u_i$ for $w$ in a neighborhood of
$w^{-1}(r_j)$, $W_t(w,z) = w$ for $w$ in a neighborhood of $\infty$
and $\Phi_0 = \operatorname{id}|_{ Y(a(r))}$.  Since $\Phi_t$ is
holomorphic for large $w$, the Beltrami differential $\mu(\Phi_t)$ is
supported away from $w^{-1}(\infty)$ and our computation below is
unaffected by our identification of $Y(a(r+tu))$ with the affine plane
curve $z^2 = f_{a(r+tu)}(w)$.

The family of Beltrami differentials $\mu\left(\Phi_t\right) =
\overline{\partial} \Phi_t/\partial \Phi_t$ provides a lift of the
local map from $V^{rts}$ into $\teich(S)$ to a map into the unit ball
$B^1(Y(a(r)))$ in the space of $L^\infty$-Beltrami differentials on
$Y(a(r))$.  To compute $q_x(u)$, we compute $\mu\left( \Phi_t \right)$
to first order in $t$ and evaluate the right hand side of
\begin{equation}
\label{eqn:qxu} q_x(u) = \lim_{t \rightarrow 0} \frac{1}{t} \int_{Y\left( a(r) \right)} \mu(\Phi_t) q_x.
\end{equation}
Compare Equation \ref{eqn:qxu} with Equation \ref{eqn:BYQYpair}.
Since $\Phi_t$ is holomorphic in a neighborhood of the zeros and poles
of the meromorphic one-form $dw$ on $Y\left( a(r) \right)$, the
Beltrami differential $\mu\left( \Phi_t \right)$ satisfies
\begin{equation}
\label{eqn:muphit}
\mu\left( \Phi_t \right) = \frac{\overline{\partial} \Phi_t}{\partial \Phi_t} = \frac{\partial W_t(w,z) / \partial \overline{w} \cdot d\overline{w}}{\partial W_t(w,z) / \partial w \cdot dw} = \frac{\partial W_t(w,z)/\partial \overline{w}}{1+O(t)} \cdot \frac{d\overline{w}}{dw} = \partial W_t(w,z)/\partial \overline{w} \cdot \frac{d\overline{w}}{dw} + O(t^2).
\end{equation}
The product $q_x \cdot \mu\left( \Phi_t \right)$ is supported away
from small disks about the Weierstrass points of
$Y\left(a(r)\right)$. From Equation \ref{eqn:muphit} we see that the
product $q_x \cdot \mu\left( \Phi_t \right)$ is nearly exact in such a
neighborhood, i.e.
\begin{equation}
 q_x \cdot \mu\left( \Phi_t \right) = \frac{Q_x(w) \partial W_t / \partial \overline{w} }{f_{a(r)}(w)}|dw|^2 + O(t^2) = d \eta +O(t^2) \mbox{ where } \eta = \frac{i}{2} \cdot \frac{(W_t(w,z)-w) \cdot Q_x(w)}{f_{a(r)}(w)} dw.
\end{equation}
The factor of $i/2$ arises from the equation $|dw|^2 = (i/2) \cdot dw
\wedge d\overline{w}$.  Stokes' theorem gives
\begin{equation}
 \int_{Y(a(r))} q_x \cdot \mu\left( \Phi_t \right) = \int_{C_\infty} \eta + \sum_{j = 1}^5 \int_{C_j} \eta + O(t^2)
\end{equation}
where $C_j$ is a small loop around $w^{-1}(r_j)$ and $C_\infty$ is a
small loop around $w^{-1}(\infty)$.  Since the one-form $\eta$ equals
$(i/2) \cdot t u_j Q_x(w) dw / f_{a(r)}(w)$ in a neighborhood of
$w^{-1}(r_j)$ and is identically zero in a neighborhood of
$w^{-1}(\infty)$, we conclude that
\[ q_x (u) = (-2 \pi)\cdot \sum_{j=1}^5 u_j \operatorname{Res}_{r_j} \left( \frac{Q_x(w)}{f_{a(r)}(w)} dw \right) = (2 \pi) \cdot x^T \cdot N(r) \cdot u. \]
Note the extra factor of two arising from the fact that $w$ maps $C_j
\subset Y\left( a(r) \right)$ to a closed curve on the $w$-sphere
winding twice about $r_j$.
\end{proof}
We are now ready to prove Theorem \ref{thm:algpairingcoeffs}.
\begin{proof}[Proof of Theorem \ref{thm:algpairingcoeffs}]
For any $r \in V^{rts}$, set $Q(r) = Q(Y(a(r)))$.  The pairings $Q(r)
\times T_{a(r)}V \to \cc$ and $Q(r) \times T_r V^{rts} \to \cc$
correspond to linear maps $L_r : T_rV^{rts} \to (Q(r))^*$ and $L_a :
T_{a(r)} V \to (Q(r))^*$ which are related to one another by
composition with the derivative of $a : V^{rts} \to V$, i.e. $L_r =
L_a \circ da_r$.  By Proposition \ref{prop:algpairingrts}, the matrix
$N(r)$ is the matrix for $L_r$ with respect to the bases for $Q(r)$
and $T_rV^{rts}$ discussed above.

One can give a conceptual proof that the derivative $da_r$, the matrix
$N(r)$ defined by Equation \ref{eqn:Nr} and the matrix $M(a)$ whose
$(j,k)$-th coefficient is the polynomial $M_{j+k}(a)$ defined in
Equation \ref{eqn:Macoeff} are related by
\begin{equation}
\label{eqn:NrMada}
N(r) = \frac{M(a(r)) \cdot da_r}{\Disc(f_{a(r)}(w))}. 
\end{equation}
We include a program to verify Equation \ref{eqn:NrMada} in the
auxiliary computer files.  We conclude $q_x(v)$ and $M(a)$ satisfy
Equation \ref{eqn:algpairingcoeffs}.
\end{proof}

\section{The eigenform location algorithm}
\label{sec:eformverify}
In this section, we develop a method of eigenform location based on
the condition on eigenform products imposed by Proposition
\ref{prop:eformproducts} and the formula in Theorem
\ref{thm:algpairingcoeffs}.  We demonstrate our method by proving the
following theorem.
\begin{thm}
\label{thm:w12pt}
The Jacobian of the algebraic curve $Y$ with Weierstrass model
\begin{equation} \label{eqn:w12pt} z^2 = w^5-2 w^4 - 12 w^3-8 w^2+52 w +24 \end{equation}
admits real multiplication by $\ord_{12} = \zz\left[ \sqrt{3} \right]$
with eigenforms $dw/z$ and $w \cdot dw/z$.
\end{thm}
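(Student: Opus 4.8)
The plan is to split the statement into two tasks: (i) certifying that $\jac(Y)$ carries real multiplication by $\ord_{12}$ at all, and (ii) certifying that $dw/z$ and $w\cdot dw/z$ span the two eigenlines. Task (ii) is exactly what the variational machinery of Sections \ref{sec:jacRM}--\ref{sec:algpairing} is built to handle, so I would organize the argument around it and treat (i) as the input that makes it rigorous. Writing $f(w)=w^5-2w^4-12w^3-8w^2+52w+24$, the point is $a=(24,52,-8,-12,-2)$ in the coordinates of Equation \ref{eqn:fa}, and one checks $\Disc(f)\neq 0$ so that $a\in V$. The two candidate forms span $\Omega(Y)$, and their product is the quadratic differential $q=(dw/z)\cdot(w\,dw/z)=w\,dw^2/f(w)$, which in the parametrization of Equation \ref{eqn:qx} is $q_x$ with $x=(0,1,0)$.

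The central computation would use Theorem \ref{thm:algpairingcoeffs}: evaluate the matrix $M(a)$ at the integer vector $a$ and form the covector $\ell=x^{T}M(a)$ on $T_aV\cong\cc^5$, which records how $q$ pairs with deformations of $Y$. All of this is exact arithmetic in $\qq$ once $\Disc(f)$ is cleared, so the transcendental factor $2\pi$ and any floating point are avoided. By Theorem \ref{thm:eformproducts} (equivalently Proposition \ref{prop:eformproducts}), if $Y$ lies on the real-multiplication locus with $dw/z$ and $w\,dw/z$ as eigenforms, then $q$ must annihilate the two-dimensional tangent space to that locus. I would therefore produce an explicit two-parameter family of curves through $Y$ whose Jacobians carry real multiplication by $\ord_{12}$ with these eigenforms---most naturally by pulling $Y$ back along the inverse period map $\jac_{12}^{-1}$ of Equation \ref{eqn:invjac}, or by matching $Y$ to the family $Y(\tau)$ of Equation \ref{eqn:Ywm}---and take $W\subset T_aV$ to be the preimage of its tangent space. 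The verification then reduces to the linear-algebra identity $\ell|_{W}=0$, checked over $\qq$.

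To pass from this annihilation to the statement, I would argue by dimension count. By Equation \ref{eqn:coTYQY} the space $Q(Y)\cong\cc^3$ is dual to $T_Y\M_2\cong\cc^3$, so the annihilator in $Q(Y)$ of the two-dimensional real-multiplication tangent is exactly one-dimensional, and by Theorem \ref{thm:eformproducts} it is spanned by the product of the genuine eigenforms. Expanding $dw/z$ and $w\,dw/z$ in an eigenbasis for $\Omega(Y)$ and imposing that their product lie on this single line forces the cross terms to vanish, hence places each form in one of the two eigenlines; this is precisely assertion (ii). Combined with the real multiplication supplied by task (i), this yields the theorem, and the same bookkeeping records the eigenvalues $\pm\sqrt{3}$ through Proposition \ref{prop:eigenforms}.

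The main obstacle is task (i). The variational criterion is only the \emph{necessary} condition cut out by the derivative of the period map, so on its own it cannot distinguish a genuine real-multiplication point from a curve where the off-diagonal period merely has a critical point; one truly needs an independent certificate that $\jac(Y)$ sits on the $\ord_{12}$-locus. Computing the transcendental period matrix and diagonalizing it by hand would defeat the rigor we are after, so instead I would discharge (i) by exhibiting $Y$ as a fiber of the family $Y(\tau)$, whereupon real multiplication holds by the very construction of $B(\tau)$ in Equation \ref{eqn:Btau}, or by checking algebraically that the Igusa--Clebsch invariants of $Y$ lie on the Humbert surface of discriminant $12$. Once that membership is in hand, the variational step certifies the eigenforms cleanly and entirely algebraically.
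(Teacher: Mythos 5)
Your proposal is correct and follows essentially the same route as the paper: the paper certifies real multiplication exactly as you suggest, by checking $IC(Y)=IC_{12}(-3/8,-1/2)$ against the Elkies--Kumar parametrization (Proposition \ref{prop:exhasRM}), then lifts $IC_{12}$ to a two-parameter family $g:B\to V$ through $a=(24,52,-8,-12,-2)$, computes via Theorem \ref{thm:algpairingcoeffs} that the annihilator of $dg_b$ is the line spanned by $w\,dw^2/f_a(w)$, and concludes by the same unique-factorization-of-the-eigenform-product argument you give. The one point worth making explicit (which the paper handles by direct computation of the nullspace of $(M(a)\cdot L)^T$ and which your dimension count presupposes) is that the annihilator really is one-dimensional, i.e.\ that the family is an immersion into $\M_2$ at $Y$.
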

\noindent Since the one-form $dw/z$ has a double zero, Theorem
\ref{thm:w12pt} immediately implies the following.
\begin{cor}
The one-form up to scale $(Y,[dw/z])$ defined by Equation
\ref{eqn:w12pt} lies on $W_{12}$.
\end{cor}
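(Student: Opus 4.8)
The plan is to read the statement as a membership assertion, $(Y,[dw/z])\in W_{12}$, and to establish it by the three defining conditions of $W_D$. Conditions (1) and (2) are immediate: $Y$ has genus two, and in the degree-five model $z^2=f(w)$ with $f(w)=w^5-2w^4-12w^3-8w^2+52w+24$ the form $dw/z$ has a double zero at the Weierstrass point over $w=\infty$. The real content is condition (3), namely Theorem \ref{thm:w12pt}, that $\jac(Y)$ admits real multiplication by $\ord_{12}=\zz[\sqrt{3}]$ stabilizing the lines $[dw/z]$ and $[w\,dw/z]$. I would certify this with the algebraic pairing of Theorem \ref{thm:algpairingcoeffs}, avoiding any transcendental period computation. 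Record that the point $a^\ast=(a_0,\dots,a_4)=(24,52,-8,-12,-2)$ lies in $V$ (indeed $f=(w-2)(w^2+4w+6)(w^2-4w-2)$ has distinct roots, so $\Disc(f)\neq0$). Since $dw/z$ and $w\,dw/z$ span $\Omega(Y)$, the assertion is exactly that $Y\in\teich(S,\rho)$ for the eigenbasis adapted to $\rho=\rho_{12}$, with these two forms the $h_1$- and $h_2$-eigenforms of Proposition \ref{prop:eigenforms}. The relevant eigenform product is $(dw/z)\cdot(w\,dw/z)=w\,dw^2/f(w)$, which in the coordinates of Equation \ref{eqn:qx} is $q_x$ with $x=(0,1,0)$.

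First I would compute the pairing explicitly. Using $q_x(v)=(2\pi)\,x^T M(a)\,v$ from Theorem \ref{thm:algpairingcoeffs}, I evaluate the $3\times5$ matrix $M(a^\ast)$, whose entries are the rational functions $M_k$ of Equation \ref{eqn:Macoeff} for $2\le k\le8$ specialized at $a^\ast$, and extract the row $\ell=(0,1,0)\,M(a^\ast)\in\qq^5$. By Theorem \ref{thm:eformproducts}, if $Y$ carries the claimed real multiplication then $\ell$ annihilates the image of $(dY)$ at $a^\ast$, i.e. the two-dimensional tangent to the real-multiplication surface through $Y$. Computing $\ell$ is purely rational arithmetic, and identifying which directions of $T_{a^\ast}V\cong\cc^5$ lie in $\ker\ell$ is the first concrete output of the algorithm.

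The heart of the argument, and the step I expect to be the main obstacle, is upgrading this necessary condition to a rigorous proof of sufficiency without floating-point periods. By Ahlfors's formula the eigenform product is $d(P_{12})$, and by Proposition \ref{prop:diagperRM} the curve $Y$ carries the real multiplication precisely when the off-diagonal period $P_{12}$ vanishes. My plan is therefore to join $a^\ast$ by a connected family $a(t)$ in $V$ to a curve whose membership in $\teich(S,\rho)$ is manifest, such as a member on the product or cusp locus where diagonality of the period matrix is visible, and to verify \emph{as an identity of rational functions}, through $M(a(t))$, that $q=w\,dw^2/f_{a(t)}$ annihilates $a'(t)$ for all $t$. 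Constancy of $P_{12}$ along the family together with its vanishing at the seed would force $P_{12}(a^\ast)=0$, hence the real multiplication at $Y$. The delicate points are producing a suitable seed and family and checking the tangency as a rigorous identity in the relevant function field; the alternative route of exhibiting the endomorphism $\sqrt{3}$ directly as an algebraic correspondence is deferred to \cite{km:correspondences}. I would then confirm that the order obtained is exactly $\ord_{12}=\zz[\sqrt{3}]$, rather than a strictly larger order, by checking that the induced eigenvalues on $dw/z$ and $w\,dw/z$ are $\pm\sqrt{3}$ and that the embedding is proper.

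With condition (3) in hand, the corollary is immediate: the double zero of $dw/z$ noted above supplies conditions (1) and (2), so the marked curve $(Y,[dw/z])$ satisfies all the defining conditions of $W_{12}$, and the eigenforms identified by the algorithm exhibit it as a point of the Weierstrass curve.
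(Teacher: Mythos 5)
Your reduction of the corollary to Theorem \ref{thm:w12pt} is exactly right, and conditions (1) and (2) are handled correctly (the double zero of $dw/z$ at the point over $w=\infty$ in the degree-five model). The gap is in your plan for condition (3). You correctly observe that Proposition \ref{prop:eformproducts} only gives a \emph{necessary} condition on the eigenform product, and you identify the upgrade to sufficiency as the main obstacle --- but the mechanism you propose for that upgrade does not work, and it is not what the paper does. The paper certifies the \emph{existence} of real multiplication by $\ord_{12}$ in one line (Proposition \ref{prop:exhasRM}): by the Elkies--Kumar theorem (Theorem \ref{thmek:IC12}), $\jac(Y)$ has real multiplication by $\ord_{12}$ if and only if $IC(Y)$ lies in the closure of the image of $IC_{12}$, and one checks $IC(Y)=IC_{12}(-3/8,-1/2)$. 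With existence in hand, the Ahlfors/annihilator computation is used only to \emph{identify} the eigenforms: the annihilator of the image of $dg_b$ is the one-dimensional line spanned by $w\,dw^2/f_a(w)$ (Proposition \ref{prop:annw12pt}), the eigenform product must lie on that line, and $w\,dw^2/f_a(w)$ factors uniquely up to scale and permutation as $(dw/z)\cdot(w\,dw/z)$. No continuation argument is needed.

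Your proposed deformation to a ``seed'' fails on two counts. First, there is no obvious seed: the product locus and the cusp locus both lie outside $V$ (a polarized product of elliptic curves is not the Jacobian of a smooth curve $z^2=f_a(w)$ with $\Disc(f_a)\neq 0$, and cusps correspond to $I_{10}=0$), so a member of $V$ with ``manifest'' diagonal period matrix is not available. Second, and more seriously, the argument is circular: verifying that the candidate differential $q=w\,dw^2/f_{a(t)}$ annihilates $a'(t)$ does not show that $P_{12}$ is constant along the family, because Ahlfors's formula gives $dP_{12}=\omega_1\omega_2$ for the forms dual to the eigenbasis $U$, and you do not know that $q$ is proportional to $\omega_1\omega_2$ at points of the family --- that is precisely the statement you are trying to prove. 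Along a one-parameter family in the three-dimensional moduli space the annihilator of $a'(t)$ in $Q(Y)$ is two-dimensional, so membership of $q$ in it carries no information about $dP_{12}$. The fix is to adopt the paper's logic: existence of real multiplication comes from the explicit model of $X_{12}$, and the rank conditions in \ref{ELA:ranks} guarantee the annihilator of the two-dimensional image of $dg_b$ is a line, which pins down the eigenform product.
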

\noindent We conclude this section by summarizing our method in the
Eigenform Location Algorithm (ELA, Figure \ref{fig:ELA}).  Our proof
of Theorem \ref{thm:w12pt} is to implement ELA with input defined over
$\qq$.  We can implement ELA with floating point input to numerically
sample the function in Equation \ref{eqn:a0I2} and generate the
polynomials in Table \ref{tab:wDrs} giving algebraic models of
Weierstrass curves.  In subsequent sections, we will implement ELA
with input defined over function fields over number fields to certify
our algebraic models of Weierstrass curves and prove Theorems
\ref{thm:wDrs} and \ref{thm:wDers}.

\paragraph{Igusa-Clebsch invariants.}  
For a genus two topological surface $S$, the Igusa-Clebsch invariants
define a holomorphic map $IC : \teich(S) \rightarrow \pp(2,4,6,10)$
where $\pp(2,4,6,10)$ is the weighted projective space
\begin{equation}
\label{eqn:wtprojspace}
\mathbb P(2,4,6,10) = \cc^4 / \cc^* \mbox{ where $\cc^*$ acts by }
\lambda \cdot (I_2,I_4,I_6,I_{10}) = (\lambda^2 I_2,\lambda^4
I_4,\lambda^6 I_6,\lambda^{10} I_{10}).
\end{equation}
The coordinate $I_k$ of $IC$ is called the {\em Igusa-Clebsch
  invariant of weight $k$} (cf. \cite{igusa:arithmeticmoduli}).

For $a \in V$ and $Y(a) \in \M_2$ as defined in Section
\ref{sec:algpairing}, the invariants $I_k(Y(a))$ are polynomial in
$a$.\footnote{We will not repeat the formula for $I_k$ here.  See
  \cite{igusa:arithmeticmoduli}, the function {\sf
    IgusaClebschInvariants()} in \texttt{Magma} or the file {\sf
    IIa.magma} in the auxiliary files.} The invariant $I_{10}$ is the
discriminant of the polynomial $f_a(w)$ and the curve $Y$ defined by
Equation \ref{eqn:w12pt} has
\[ IC(Y) = (56 : -32 : -348 : -324). \]
Just as the $j$-invariant gives an algebraic bijection between $\M_1$
and $\cc$, the Igusa-Clebsch invariants give an algebraic bijection
between $\M_2$ and $\cc^3$.  The map $IC: \teich(S) \rightarrow
\pp(2,4,6,10)$ is $\Mod(S)$ invariant and covers a bijection between
$\M_2$ and the hyperplane complement
\begin{equation}
\label{eqn:ICimage}
\left\{ (I_2 : I_4 : I_6 : I_{10}) : I_{10} \neq 0 \right\} \subset \pp(2,4,6,10).
\end{equation}

\paragraph{Real multiplication by $\ord_{12}$.}  
Recall from Section \ref{sec:jacRM} that the Hilbert modular surface
\[ X_{12} = \hh \times \hh / \PSL(\ord_{12} \oplus \ord_{12}^\vee) \]
admits an inverse period map $\jac_{12}^{-1} : X_{12} \to \M_2$ which
is a rational map parametrizing the collection of genus two surfaces
whose Jacobians admit real multiplication by $\ord_{12}$ (cf. Equation
\ref{eqn:invjac}).  Set $H_{12} = \cc^2$ (the $(r,s)$-plane) and
consider the map $IC_{12} : H_{12} \rightarrow \pp(2,4,6,10)$ defined
by 
\begin{multline}
\label{eqn:IC12}
IC_{12}(r,s) = \big( -8(3 + 3r - 3s - 2s^2 + 2s^3) : 4(s-1)^2(9r + 15rs + s^4) : \\
 -4(s-1)^2 (72r + 90r^2 + 48rs + 102r^2s - 141rs^2 - 38rs^3 + 8s^4 + 67rs^4 -8s^5 - 6s^6 + 6s^7)  \\
  : -4r^3(s-1)^6 \big).
\end{multline}
The map above is defined in \cite{elkieskumar:hms},
where it is shown that the map $X_{12} \to \pp(2,4,6,10)$ factors
through $IC_{12}$.
\begin{thm}[Elkies-Kumar]
\label{thmek:IC12}
The rational map $IC \circ \jac_{12}^{-1} : X_{12} \to \pp(2,4,6,10)$
is the composition of a degree two rational map $X_{12} \rightarrow
H_{12}$ branched along the curve
\begin{equation}
\label{eqn:b12}
b_{12}(r,s) = 0 \mbox{ where } b_{12}(r,s) = (s-1)(s+1)(16r + 27r^2 - 18rs^2 - s^4 + s^6)
\end{equation}
and the map $IC_{12} : H_{12} \to \pp(2,4,6,10)$.
\end{thm}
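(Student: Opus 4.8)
The plan is to exhibit $IC \circ \jac_{12}^{-1} : X_{12} \to \pp(2,4,6,10)$ as the quotient of $X_{12}$ by a natural involution followed by an explicit Igusa-Clebsch formula, following the K3-theoretic method used by Elkies and Kumar to model Hilbert modular surfaces. Recall from Section \ref{sec:jacRM} that $X_{12}$ is a coarse moduli space for principally polarized abelian surfaces $A$ equipped with a proper, self-adjoint embedding $\ord_{12} \hookrightarrow \End(A)$, and that $\jac_{12}^{-1}$ records the genus two curve whose Jacobian is such an $A$. Forgetting the embedding $\ord_{12} \hookrightarrow \End(A)$ --- equivalently, precomposing it with the Galois conjugation $\sqrt{3} \mapsto -\sqrt{3}$, the unique nontrivial ring automorphism of $\ord_{12}$ --- gives a generically two-to-one map onto the Humbert surface of discriminant $12$ in $A_2$. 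On $\hh \times \hh$ this conjugation interchanges the two real places, hence the two eigen-directions for real multiplication, so its deck transformation is exactly the swap involution $\sigma : (\tau_1,\tau_2) \mapsto (\tau_2,\tau_1)$, which normalizes $\PSL(\ord_{12} \oplus \ord_{12}^\vee)$ and descends to $X_{12}$. I would first record that $IC \circ \jac_{12}^{-1}$ is therefore $\sigma$-invariant and factors through the quotient $X_{12}/\sigma$.

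The heart of the proof is to identify $X_{12}/\sigma$ with the $(r,s)$-plane $H_{12}$ and to compute the induced map to $\pp(2,4,6,10)$. For this I would pass from $A$ to an associated K3 surface via the Shioda-Inose/Kummer construction: the transcendental lattice of an abelian surface with real multiplication by $\ord_{12}$ has rank four, so the companion K3 surface has N\'eron-Severi lattice of rank $18$ and these K3 surfaces are parametrized by a two-dimensional moduli space birational to $X_{12}$. Choosing a well-adapted elliptic fibration on the generic such K3 surface produces a Weierstrass equation whose two essential parameters can be taken to be $r$ and $s$; this both exhibits $X_{12}/\sigma$ as a rational surface with coordinates $(r,s)$ and, by tracing the fibration back through the Shioda-Inose correspondence to $A$ and its Jacobian, expresses the Igusa-Clebsch invariants of the corresponding genus two curve as the homogeneous forms recorded in $IC_{12}(r,s)$ of Equation \ref{eqn:IC12}.

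It then remains to determine the branch locus of the degree-two map $X_{12} \to H_{12}$. Since this map is the quotient by $\sigma$, its branch curve in the $(r,s)$-plane is the image of the fixed locus of $\sigma$ on $X_{12}$, namely the diagonal $\{\tau_1 = \tau_2\}$ together with its twisted translates $\{\tau_2 = g(\tau_1)\}$ fixed by $\sigma$ modulo $\PSL(\ord_{12}\oplus\ord_{12}^\vee)$; these are the modular and product-type curves along which $A$ acquires extra structure. Equivalently, one presents $X_{12}$ as the double cover $w^2 = b_{12}(r,s)$ of $H_{12}$. I would compute this ramification in the $(r,s)$-coordinates and verify it is cut out exactly by $b_{12}(r,s) = (s-1)(s+1)(16r+27r^2-18rs^2-s^4+s^6)$; a generic fiber count over $H_{12}$ then confirms the degree is two and completes the identification.

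The main obstacle is the middle step: constructing the correct elliptic fibration, normalizing its Weierstrass model, and pushing the result through the Shioda-Inose correspondence so as to recover $A$, its principal polarization, and its Igusa-Clebsch invariants as explicit rational functions of $(r,s)$ matching the discriminant $12$ lattice data. This is the substantial geometric computation carried out in \cite{elkieskumar:hms}. Once the formulas are in hand, however, they admit a purely rigorous symbolic verification: one checks that $IC_{12}(r,s)$ identically satisfies the defining equation of the Humbert surface of discriminant $12$ in the Igusa-Clebsch invariants, that the map to $\pp(2,4,6,10)$ is generically two-to-one, and that its ramification is $b_{12}(r,s)=0$.
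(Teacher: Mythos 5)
This statement is not proved in the paper at all: it is imported verbatim from \cite{elkieskumar:hms} (the text surrounding Equation \ref{eqn:IC12} says only that the map is ``defined in \cite{elkieskumar:hms}, where it is shown that\dots''), so there is no internal proof to compare against. Your sketch is a faithful outline of how the cited work establishes the result --- quotient of $X_{12}$ by the swap involution $(\tau_1,\tau_2)\mapsto(\tau_2,\tau_1)$, identification of the quotient with an $(r,s)$-plane via elliptic fibrations on Shioda--Inose K3 surfaces of Picard rank $18$, and computation of the branch curve $b_{12}=0$ --- and, like the paper, you explicitly defer the substantial fibration/normalization computation to \cite{elkieskumar:hms}, so the proposal is consistent with the paper's treatment. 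One minor imprecision: the moduli space of the relevant lattice-polarized K3 surfaces is birational to the quotient $X_{12}/\sigma$ (the Humbert surface), not to $X_{12}$ itself, as your own next clause correctly indicates.
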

\noindent Compare Equation \ref{eqn:b12} with Table \ref{tab:bDrs}.
From Theorem \ref{thmek:IC12} we see that $\jac(Y)$ admits real
multiplication by $\ord_{12}$ if and only if $IC(Y)$ is in the closure
of the image of $IC_{12}$.
\begin{prop}
\label{prop:exhasRM}
The Jacobian of the genus two curve defined by Equation
\ref{eqn:w12pt} admits real multiplication by $\ord_{12}$.
\end{prop}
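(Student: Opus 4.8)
The plan is to use the equivalence derived from Theorem \ref{thmek:IC12} in the paragraph preceding the statement: $\jac(Y)$ admits real multiplication by $\ord_{12}$ if and only if $IC(Y)$ lies in the closure of the image of the map $IC_{12}:H_{12}\to\pp(2,4,6,10)$. It therefore suffices to exhibit a point $(r,s)\in H_{12}$ whose image $IC_{12}(r,s)$ is projectively equal to $IC(Y)=(56:-32:-348:-324)$ in $\pp(2,4,6,10)$; producing such an $(r,s)$ shows that $IC(Y)$ lies in the image itself.

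First I would record the Igusa--Clebsch invariants of $Y$. Using the formula for $f_a(w)$ from Section \ref{sec:algpairing} (with $I_{10}=\Disc(f_a)$) one computes directly that $IC(Y)=(56:-32:-348:-324)$. In particular $I_{10}\neq0$, so $Y$ is a smooth genus two curve and $\jac(Y)$ is a genuine principally polarized abelian surface.

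Next I would solve the membership problem. Since the $\cc^*$-action on $\pp(2,4,6,10)$ has weights $(2,4,6,10)$, the equality $IC_{12}(r,s)=IC(Y)$ means there is a $\lambda\in\cc^*$ with the four coordinates of $IC_{12}(r,s)$ equal to $(56\lambda^2,-32\lambda^4,-348\lambda^6,-324\lambda^{10})$. To eliminate $\lambda$ I would pass to the three absolute invariants $I_4/I_2^2$, $I_6/I_2^3$ and $I_{10}/I_2^5$, which at $IC(Y)$ take the values $-\tfrac{1}{98}$, $-\tfrac{87}{43904}$ and $-\tfrac{81}{137682944}$ respectively (note $I_2=56\neq0$). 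Substituting the coordinate expressions from Equation \ref{eqn:IC12} and clearing denominators yields a polynomial system in $r$ and $s$; a Gr\"obner basis or resultant computation then isolates the finitely many candidate preimages. I would exhibit the resulting solution $(r,s)$ and verify by direct substitution that $IC_{12}(r,s)$ agrees with $(56:-32:-348:-324)$ in $\pp(2,4,6,10)$, recording the scale $\lambda$ explicitly so that all four weighted coordinates match at once. Because this $(r,s)$ has $r\neq0$, $s\neq1$ and $I_2\neq0$, the point $IC(Y)$ is attained by $IC_{12}$ (and not merely by a limit), and the cited equivalence furnishes the real multiplication.

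The main obstacle is entirely computational: solving the weighted-projective system and recognizing the preimage $(r,s)$ exactly, rather than merely numerically. The only conceptual care needed is the bookkeeping of the scale $\lambda$, so that the match holds for all four coordinates simultaneously and not only for the three scale-invariant ratios (which already determine the projective point once $I_2\neq0$). Everything else is formal given Theorem \ref{thmek:IC12}.
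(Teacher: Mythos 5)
Your proposal is correct and is essentially the paper's own argument: the paper's proof consists of exhibiting the point $b=(-3/8,-1/2)$ with $IC(Y)=IC_{12}(b)$ and invoking Theorem \ref{thmek:IC12}. You additionally spell out how one would locate that preimage (via the absolute invariants and a Gr\"obner/resultant computation) and how to track the weight-$\lambda$ scaling, but the underlying argument is identical.
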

\begin{proof}  
The curve $Y$ defined by Equation \ref{eqn:w12pt} satisfies $IC(Y) =
IC_{12}(b)$ where $b=(-3/8,-1/2)$.
\end{proof}

\paragraph{Deformations.}  
Now set $a = (24,52,-8,-12,-2) \in V$ so that the algebraic curve
defined by Equation \ref{eqn:w12pt} is isomorphic to $Y(a)$ in the
notation of Section \ref{sec:algpairing} and set $b=(-3/8,-1/2)$ so that $IC(Y(a)) = IC_{12}(b)$.  Also, set
\begin{equation}
\label{eqn:avrvs}
 v_r = (80, -32, 112, -16, -12) \in T_aV \mbox{ and } v_s = (36, -6, 76, 13, -9) \in T_aV.
\end{equation}
These tangent vectors are solutions to the linear equations
\begin{equation}
\label{eqn:vrvs}
d(IC \circ Y)_a(v_r) = d(IC_{12})_b((1,0)) \text{ and } d(IC \circ Y)_a(v_s) = d(IC_{12})_b((0,1))
\end{equation} 
and were found by linear algebra.  There is a $3$-dimensional space of
solutions in each case; we chose integer solutions of small height.
\begin{prop} 
\label{prop:liftIC12}
There is an open neighborhood $B$ of $b = (-3/8,-1/2)$ in $\cc^2$ and
a holomorphic map $g : B \rightarrow V$ such that
\[ dg_{b}( (1,0) ) = v_r, dg_{b}( (0,1) ) = v_s \mbox{ and } IC\left( Y(g(r,s)) \right) = IC_{12}(r,s). \]
\end{prop}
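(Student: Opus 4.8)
The plan is to produce $g$ as a local holomorphic section of the Igusa--Clebsch map, precomposed with $IC_{12}$. Write $\Psi = IC \circ Y : V \to \pp(2,4,6,10)$ for the holomorphic map recording the Igusa--Clebsch invariants of the quintic $f_a$; by the proof of Proposition \ref{prop:exhasRM} we have $\Psi(a) = IC_{12}(b)$. Since $a \in V$, the invariant $I_{10}(Y(a)) = \Disc(f_a)$ is nonzero, so $\Psi(a)$ lies in the hyperplane complement on which $IC$ restricts to a biholomorphism onto $\M_2$. Fixing absolute invariants (ratios such as $I_2^5/I_{10}$) as affine coordinates near $\Psi(a)$ turns $\Psi$ into a genuine holomorphic map $V \to \cc^3$ and $IC_{12}$ into a holomorphic map into $\cc^3$ defined near $b$.

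The content of the proof is the claim that $a$ is a regular point of $\Psi$, i.e.\ that $d\Psi_a : \cc^5 \to \cc^3$ is surjective. The kernel of $d\Psi_a$ always contains the two--dimensional tangent space to the orbit of $a$ under the transformations $w \mapsto \alpha w + \beta$ (which carry $f_a$ to another monic quintic defining an isomorphic curve, hence preserve $\Psi$), so $\operatorname{rank} d\Psi_a \le 3$. On the other hand, Equation \ref{eqn:vrvs} gives $d\Psi_a(v_r) = d(IC_{12})_b((1,0))$ and $d\Psi_a(v_s) = d(IC_{12})_b((0,1))$, and these are linearly independent (equivalently, $IC_{12}$ is an immersion at $b$), as one checks from the explicit formula \ref{eqn:IC12}; hence $\operatorname{rank} d\Psi_a \ge 2$. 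The remaining point is to rule out rank two, equivalently to check that $Y$ is unramified at $a$ (which holds unless $Y(a)$ carries an automorphism beyond the hyperelliptic involution). This is a finite verification with the explicit polynomials $I_k(Y(a))$, amounting to exhibiting one further vector $c \in \cc^5$ with $d\Psi_a(c)$ independent of $d\Psi_a(v_r), d\Psi_a(v_s)$.

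Granting surjectivity of $d\Psi_a$, pick such a $c$, so that $d\Psi_a(v_r), d\Psi_a(v_s), d\Psi_a(c)$ is a basis of $\cc^3$, and let $S \subset V$ be the affine three--plane through $a$ with tangent space $\operatorname{span}(v_r, v_s, c)$. Then $d(\Psi|_S)_a = d\Psi_a|_{T_aS}$ is an isomorphism, so by the holomorphic inverse function theorem $\Psi|_S$ is a biholomorphism from a neighborhood of $a$ in $S$ onto a neighborhood of $\Psi(a)$; let $\sigma$ be its inverse. Shrinking to a neighborhood $B$ of $b$ on which $IC_{12}$ lands in the domain of $\sigma$ with $\sigma(IC_{12}(B)) \subset V$, set $g = \sigma \circ IC_{12}$. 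Then $\Psi \circ g = IC_{12}$, i.e.\ $IC(Y(g(r,s))) = IC_{12}(r,s)$, and $g(b) = \sigma(\Psi(a)) = a$. Finally $dg_b = (d\Psi_a|_{T_aS})^{-1} \circ d(IC_{12})_b$, and because $v_r, v_s \in T_aS$ map under $d\Psi_a$ to $d(IC_{12})_b((1,0)), d(IC_{12})_b((0,1))$ we obtain $dg_b((1,0)) = v_r$ and $dg_b((0,1)) = v_s$, as desired.

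I expect the regularity claim of the second paragraph to be the only real obstacle: everything else is the inverse function theorem together with the compatibility already built into $v_r, v_s$ by Equation \ref{eqn:vrvs}. Should $\Psi$ unexpectedly fail to be submersive at $a$ (so that the solution space of \ref{eqn:vrvs} is larger than expected), one can instead build the lift directly from the real--multiplication family: a local section of the cover $X_{12} \to H_{12}$, which is \'etale at $b$ since $b_{12}(b) \neq 0$, combined with the inverse period map $\jac_{12}^{-1}$ (Theorem \ref{thmek:IC12}) and a holomorphic choice of degree--five Weierstrass models, yields a lift $g_0$ with $\Psi \circ g_0 = IC_{12}$; one then matches the derivative by composing with a $B$--family of transformations $w \mapsto \alpha w + \beta$, whose infinitesimal effect ranges over the fiber directions by which $v_r, v_s$ differ from $dg_{0,b}$.
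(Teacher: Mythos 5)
Your proposal is correct and follows essentially the same route as the paper, whose proof is a one-line appeal to the inverse function theorem, the equality $IC(Y(a)) = IC_{12}(b)$, Equation \ref{eqn:vrvs}, and the fact that $IC \circ Y$ is a submersion at $a$ (the latter being exactly the rank condition \ref{ELA:ranks} verified computationally in ELA). You have simply written out the standard section-construction argument in full detail and correctly isolated the submersivity of $IC\circ Y$ at $a$ as the one fact requiring explicit verification.
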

\begin{proof}
This proposition follows the inverse function theorem and the equality
$IC(Y(a)) = IC_{12}(b)$, Equation \ref{eqn:vrvs} and the fact that $IC
\circ Y : V \to \pp(2,4,6,10)$ is a submersion at $a$.
\end{proof}
Using Theorem \ref{thm:algpairingcoeffs}, we can compute the
annihilator of the image of $dg_b$.
\begin{prop}
\label{prop:annw12pt}
The annihilator of the image of $dg_b$ is the line of quadratic
differentials spanned by $w \cdot dw^2/f_a(w)$ in $Q(Y(a))$.
\end{prop}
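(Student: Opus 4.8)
The plan is to reduce the statement to a single exact linear-algebra computation over $\qq$, using the closed-form pairing of Theorem \ref{thm:algpairingcoeffs}. By Proposition \ref{prop:liftIC12} the image of $dg_b$ in $T_aV$ is the two-dimensional subspace spanned by $v_r = dg_b((1,0))$ and $v_s = dg_b((0,1))$, the explicit integer vectors of Equation \ref{eqn:avrvs}. Identifying $Q(Y(a))$ with $\cc^3$ via Equation \ref{eqn:qx}, Theorem \ref{thm:algpairingcoeffs} gives $q_x(v) = (2\pi)\, x^T M(a)\, v$. Hence $q_x$ annihilates the image of $dg_b$ if and only if $x^T M(a) v_r = 0$ and $x^T M(a) v_s = 0$, i.e. if and only if $x$ lies in the left kernel of the $3 \times 2$ matrix
\[
 P \;=\; M(a)\cdot \begin{pmatrix} v_r & v_s \end{pmatrix},
\]
with $M(a)$ evaluated at $a = (24,52,-8,-12,-2)$. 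So the proposition is equivalent to two concrete claims: (i) the vector $(0,1,0)$ — which corresponds under Equation \ref{eqn:qx} to $q_{(0,1,0)} = w\cdot dw^2/f_a(w)$ — lies in this left kernel, and (ii) the left kernel is exactly one-dimensional, i.e. $P$ has rank two.

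Next I would assemble $M(a)$ explicitly. Each entry $(M(a))_{jk} = M_{j+k}(a)$ is the symmetric function of the roots of $f_a$ from Equation \ref{eqn:Macoeff}, which (as established just before Theorem \ref{thm:algpairingcoeffs}) is a rational function of $a$ with denominator $\Disc(f_a)$; the explicit polynomial formulas for the $M_k(a)$ with $2 \le k \le 8$ are recorded in the auxiliary files. Since $a$, $v_r$ and $v_s$ have integer entries and $\Disc(f_a) = I_{10}(Y(a)) \neq 0$, evaluating these formulas is exact arithmetic over $\qq$ and produces a $3 \times 2$ matrix $P$ with rational entries. I would then verify claims (i) and (ii) by inspection of $P$: claim (i) amounts to checking that the row of $P$ selected by $(0,1,0)$ vanishes, i.e. that $w\cdot dw^2/f_a(w)$ pairs to zero against both $v_r$ and $v_s$, and claim (ii) amounts to exhibiting a nonvanishing $2\times 2$ minor of $P$. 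Both are immediate once $P$ is in hand, and together they identify the annihilator as the line $\operatorname{span}\bigl(w\cdot dw^2/f_a(w)\bigr)$.

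I do not expect a genuine conceptual obstacle here: the entire content is the determinate rational computation of $P$ and its left kernel, and the only real labor is the bookkeeping in specializing the formulas for $M_k(a)$ to the given $a$. As a consistency check (and a useful sanity test while coding), note that Propositions \ref{prop:exhasRM} and \ref{prop:liftIC12} place $g(B)$ inside $\teich(S,\rho)$, so Proposition \ref{prop:eformproducts} already forces the eigenform product to annihilate the image of $dg_b$; the computation above is precisely what pins that annihilator down to the single line through $w\cdot dw^2/f_a(w) = (dw/z)\cdot(w\,dw/z)$, which is the fact subsequently used to read off the eigenforms $dw/z$ and $w\cdot dw/z$.
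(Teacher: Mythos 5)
Your proposal is correct and follows the paper's proof essentially verbatim: the paper likewise evaluates $M(a)$ at $a=(24,52,-8,-12,-2)$ (recording it explicitly in Equation \ref{eqn:w12ptMa}), forms the matrix $L$ with columns $v_r$, $v_s$, and observes via Theorem \ref{thm:algpairingcoeffs} that the nullspace of $(M\cdot L)^T$ is spanned by $(0,1,0)$, which is your claims (i) and (ii) combined. There is nothing further to add.
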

\begin{proof}
Setting $a = (24,52,-8,-12,-2)$, the matrix $M(a)$ defined in Theorem
\ref{thm:algpairingcoeffs} is
\begin{equation}
\label{eqn:w12ptMa}
M = \frac{1}{2^8 \cdot 3^6} \left(\begin{array}{rrrrr}
95 & -8 & 74 & 328 & 44 \\
-8 & 74 & 328 & 44 & 2752 \\
74 & 328 & 44 & 2752 & 5000
\end{array}\right).
\end{equation}
Let $L$ be the matrix with columns $v_r$ and $v_s$.  The nullspace of
$(M\cdot L)^T$ is spanned by $(0,1,0)$.  By Theorem
\ref{thm:algpairingcoeffs}, the annihilator of $dg_b$ is the line
spanned by
\[ (0 \cdot w^0 + 1 \cdot w^1+ 0 \cdot w^2) \frac{dw^2}{f_a(w)} = w \frac{dw^2}{f_a(w)}. \]
\end{proof}
\noindent We are now ready to prove Theorem \ref{thm:w12pt}.
\begin{proof}[Proof of Theorem \ref{thm:w12pt}]
Possibly making the neighborhood $B$ of $b$ in Proposition
\ref{prop:liftIC12} smaller, we can ensure that the map $g$
constructed in Proposition \ref{prop:liftIC12} has a lift $\widetilde
g: B \rightarrow \teich(S)$ where $S$ is a surface of genus two.  By
Theorem \ref{thmek:IC12}, we can choose a proper, self-adjoint
embedding $\rho : \ord \to \End(H_1(S,\zz))$ so that the image of
$\widetilde{g}$ is contained in $\teich(S,\rho)$.

Let $Y = \widetilde g(b)$ and, as in Section \ref{sec:jacRM}, let $U$
be the symplectic basis for $H_1(S,\rr)$ adapted to $\rho$ in the
sense of Proposition \ref{prop:eigenbasis} and let $\omega_1(Y)$ and
$\omega_2(Y)$ be the eigenforms dual to the $a$-cycles in $U$.  By
Proposition \ref{prop:eformproducts}, the product $\omega_1(Y) \cdot
\omega_2(Y)$ annihilates the image of $d\widetilde g_b$.  On the other
hand, $Y$ is biholomorphic to the algebraic curve defined by Equation
\ref{eqn:w12pt}, and in this model the annihilator of $d \widetilde
g_b$ is spanned by $w \cdot dw^2/f_a(w) = w \cdot dw/z \cdot dw/z$
(Proposition \ref{prop:annw12pt}).  Since there is, up to scale and
permutation, a unique pair of one-forms whose product is equal to $w
\cdot dw^2/f_a(w)$, the forms $dw/z$ and $w \cdot dw/z$ are eigenforms
for real multiplication by $\ord_{12}$ on $\jac(Y)$.
\end{proof}

\begin{figure}[h]
\setlength{\fboxsep}{0.25in}
\begin{center}
\fbox{
\parbox{5.3in}{
{\bf Eigenform Location Algorithm (ELA)}
\newline
\newline \noindent {\bf Input:} A triple $(IC_D,a,b)$ consisting of an
algebraic map $IC_D: H_D \to \pp(2,4,6,10)$, a point $a \in V$ and a
point $b \in H_D$.  The map $IC_D$ should satisfy an analogue of
Theorem \ref{thmek:IC12} for $X_D$ and the triple $(IC_D,a,b)$ should
satisfy the conditions in \ref{ELA:ICs} and \ref{ELA:ranks}.
\newline \noindent {\bf Output:} Eigenforms $[\omega_1,\omega_2]$ for $\ord_D$ on $Y(a)$.
\newline
\newline
\noindent If
\begin{enumerate}[leftmargin=*,label=\textbf{(ELA\arabic*)}]
\item \label{ELA:ICs} $IC_D(b) = IC(Y(a))$,
\item \label{ELA:ranks} $\operatorname{rank}(M(a))=\operatorname{rank}\left( d(IC \circ Y)_a \right) = 3$, and $\operatorname{rank}(d(IC_D)_b) = 2$.
\end{enumerate}
Then
\begin{enumerate}[resume,label=\textbf{(ELA\arabic*)},leftmargin=0.85in]
\item \label{ELA:computeL} compute a $5 \times 2$-matrix $L$ satisfying
\[ \operatorname{range}\left( d(IC \circ Y)_a \cdot L \right)= \operatorname{range}\left( d(IC_D)_b \right), \text{ and}\]
\item \label{ELA:computex} compute $x=(x_0,x_1,x_2)$ spanning the nullspace of $(M(a) \cdot L)^T$. 
\end{enumerate}
\hspace{0.225in} Return $\left[ \left( \lambda w +x_0 \right)\frac{dw}{z}, \left( x_2 w + \lambda \right) \frac{dw}{z} \right]$ for a non-zero $\lambda$ satisfying $\lambda^2+x_0x_2 = x_1 \lambda$.
\vspace{0.03in}
\newline Else print ``Error: (ELA1 and ELA2) is false'' and return None.
}
}
\end{center}
\sfcaption{\label{fig:ELA} The Eigenform Location Algorithm.}
\end{figure}
\paragraph{The Eigenform Location Algorithm.}  
We summarize our method of eigenform location in the Eigenform
Location Algorithm (ELA) in Figure \ref{fig:ELA}.  If \ref{ELA:ICs} is
true, we conclude that the Jacobian of $Y(a)$ admits real
multiplication by $\ord_D$.  The conditions on the ranks of $M(a)$ and
$d(IC \circ Y)_a$ in \ref{ELA:ranks} ensure that there is a
neighborhood $b$ in $H_D$ and a lift $g : B \to V$ with $g(b) = a$ as
in Proposition \ref{prop:liftIC12} and that we can compute the matrix
$L$ in \ref{ELA:computeL} by linear algebra.  The condition on the
rank of $d(IC_D)_b$ ensures the nullspace of $(M(a) \cdot L)^T$ is
one-dimensional.  Note that the product of the one-forms returned by
ELA is the differential $\lambda (x_2 w^2 + x_1 w + x_0) dw^2/f_a(w) =
\lambda q_x$.

\section{Weierstrass curve certification}
\label{sec:wcurvecertify}
In this section, we discuss implementing the Eigenform Location
Algorithm over function fields to give birational models for
irreducible Weierstrass curves.  We demonstrate this process in detail
for $W_{12}$, the first Weierstrass curve whose algebraic model has
not previously appeared in the literature.  We conclude with our proof
of Theorem \ref{thm:wDrs} giving birational models of irreducible
$W_D$.

We start with the following theorem which is a straightforward
application of ELA.
\begin{thm}
\label{thm:UCW12}
For generic $t \in \cc$, the Jacobian of the algebraic curve
$Y_{12}(t)$ defined by
\begin{multline}
\label{eqn:w12UC}
z^2 = w^5+ 2(2t + 3)w^4  -4(t - 1)(2t + 3)w^3 -8(t + 3)(2t + 3)^2 w^2  \\ 
+ 4(2t + 3)^2(t^2- 18t - 27) w +8(2t + 3)^3(t^2 + 14t + 21)
\end{multline}
admits real multiplication by $\ord_{12}$ with eigenforms $dw/z$ and
$w \cdot dw/z$.
\end{thm}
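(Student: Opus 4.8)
The plan is to run the Eigenform Location Algorithm of Figure \ref{fig:ELA} over the function field $K = \qq(t)$, repeating the four-step argument used to prove Theorem \ref{thm:w12pt} but now with the coefficient vector and the base point depending rationally on $t$. Reading the Weierstrass model (\ref{eqn:w12UC}) as $z^2 = f_{a(t)}(w)$ produces a point $a(t) = (a_0(t),\dots,a_4(t)) \in V$ defined over $K$, and I would pair it with an explicit rational parametrization $b(t) = (r(t),s(t)) \in H_{12}$ for which condition \ref{ELA:ICs} holds (recorded in the auxiliary files). With this input in hand, every step of the single-point argument becomes either a symbolic identity or a rank computation over $K$, applied at the generic point of the $t$-line and hence valid for generic $t \in \cc$.

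The first step is the analogue of Proposition \ref{prop:exhasRM}: verify \ref{ELA:ICs}, namely the equality $IC(Y_{12}(t)) = IC_{12}(b(t))$ in $\pp(2,4,6,10)$. Since both sides are explicit, this is a finite check in $\qq(t)$, and by the Elkies--Kumar Theorem \ref{thmek:IC12} it shows that $\jac(Y_{12}(t))$ admits real multiplication by $\ord_{12}$ for every $t$ at which $IC(Y_{12}(t))$ is defined. Next I would confirm the rank conditions \ref{ELA:ranks} at the generic point; since $\operatorname{rank}(M(a(t)))$, $\operatorname{rank}(d(IC\circ Y)_{a(t)})$ and $\operatorname{rank}(d(IC_{12})_{b(t)})$ are lower semicontinuous, maximal rank at the generic point means maximal rank away from the zero locus of finitely many polynomials in $t$. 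This exceptional set, together with the locus $\Disc(f_{a(t)}) = 0$ where $Y_{12}(t)$ degenerates, is exactly what is excluded by the word \emph{generic}.

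With the ranks in place, the inverse function theorem produces, for each generic $t$, a local holomorphic lift $g_t : B \to V$ as in Proposition \ref{prop:liftIC12}, whose derivative is encoded in the $5 \times 2$ matrix $L(t)$ of step \ref{ELA:computeL}; because $\ker M(a(t)) = \ker d(IC\circ Y)_{a(t)}$, the resulting nullspace is independent of the choices made in solving for $L(t)$. The heart of the proof is step \ref{ELA:computex}: using the matrix $M(a(t))$ of Theorem \ref{thm:algpairingcoeffs}, compute the vector $x(t)$ spanning the nullspace of $(M(a(t))\cdot L(t))^T$ and verify, as a symbolic identity in $\qq(t)$, that $x(t)$ is proportional to $(0,1,0)$. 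This says the annihilator of the image of $dg_{b(t)}$ is the line spanned by $w\, dw^2/f_{a(t)}(w)$, the analogue of Proposition \ref{prop:annw12pt}. Lifting $g_t$ to $\teich(S,\rho)$ and invoking Proposition \ref{prop:eformproducts} exactly as in the final paragraph of the proof of Theorem \ref{thm:w12pt}, the product $\omega_1(Y_{12}(t))\cdot\omega_2(Y_{12}(t))$ of the eigenforms must equal this differential; since $w\, dw^2/f_{a(t)}(w) = (w\,dw/z)(dw/z)$ and such a product determines its factors up to scale and order, the eigenforms are $dw/z$ and $w\cdot dw/z$.

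The one genuinely new difficulty, relative to Theorem \ref{thm:w12pt}, is that the verifications of \ref{ELA:ICs} and of $x(t) \sim (0,1,0)$ are now nontrivial polynomial identities in the transcendental $t$ rather than arithmetic over $\qq$. I expect the proportionality $x(t) \sim (0,1,0)$ to be the main obstacle, since it is precisely the computation certifying $dw/z$ and $w\,dw/z$ (rather than some other $w$-linear combinations) as the $\ord_{12}$-eigenforms, and it reflects the careful choice of coefficients in (\ref{eqn:w12UC}) and of the parametrization $b(t)$. As in Theorem \ref{thm:w12pt}, these are rigorous symbolic computations in $K$ with no floating-point input, carried out by computer algebra and included in the auxiliary files.
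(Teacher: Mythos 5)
Your proposal is correct and follows essentially the same route as the paper: the paper's proof simply reads off $a(t)$ from Equation \ref{eqn:w12UC}, sets $b(t) = (r(t),s(t))$ with $r(t) = -(13+10t+t^2)/t^3$ and $s(t) = (t+3)/t$, and runs ELA over $\qq(t)$, deferring the symbolic verifications to the auxiliary file {\sf cert12.magma}. Your expanded account of the individual ELA steps (checking \ref{ELA:ICs}, the rank conditions, and the proportionality $x(t)\sim(0,1,0)$) is exactly what that computation carries out.
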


\begin{proof}
Define $a(t) = (a_0(t),\dots,a_4(t))$ so that $a_k(t)$ is the
coefficient of $w^k$ on the right hand side of Equation
\ref{eqn:w12UC} and $Y_{12}(t)$ is isomorphic to $Y(a(t))$.  Also set
$r(t) = -(13+10t+t^2)/t^3$, $s(t) = (t+3)/t$ and $b(t) = (r(t),s(t))$.
Running our Eigenform Location Algorithm with $IC_D = IC_{12}$, $b =
b(t)$ and $a = a(t)$ reveals that $dw/z$ and $w \cdot dw/z$ are
eigenforms for real multiplication by $\ord_{12}$ on $\jac(Y_{12}(t))$
for generic $t \in \cc$ (see {\sf cert12.magma} in the auxiliary
files).  Each of the steps in ELA is linear algebra in the field
$\qq(t)$.
\end{proof}
Since the form $dw/z \in \Omega(Y_{12}(t))$ has a double zero, the
one-form up to scale $\left( Y_{12}(t),[dw/z] \right)$ is in $W_{12}$
for most $t$.
\begin{cor}
The map $t \mapsto \left( Y_{12}(t),[dw/z] \right)$ defines a
birational map $h_{12} : \pp^1 \to W_{12}$.
\end{cor}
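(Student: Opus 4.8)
The plan is to show that $h_{12}$ is a dominant rational map whose image lands in $W_{12}$, and then to prove it has degree one by recovering the parameter $t$ from a single modular coordinate. First I would confirm that $t \mapsto (Y_{12}(t),[dw/z])$ really defines a point of $W_{12}$ for generic $t$. A point of $W_{12}$ consists of a genus two surface $X$, a holomorphic one-form $\omega$ on $X$ with a double zero, and real multiplication by $\ord_{12}$ on $\jac(X)$ stabilizing $[\omega]$. Theorem \ref{thm:UCW12} already supplies the real multiplication and identifies $dw/z$ as an $\ord_{12}$-eigenform, so the only remaining condition is the double zero. On the degree five model $z^2 = f_{a(t)}(w)$ this is the standard fact that $dw/z$ vanishes to order two at the unique point over $w=\infty$ (indeed $w\cdot dw/z$ is holomorphic while $w$ has a double pole there, forcing $dw/z$ to vanish doubly). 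Hence all three defining conditions hold and $h_{12}$ is a well-defined rational map $\pp^1 \to W_{12}$.

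Next I would pass to the $(r,s)$-plane $H_{12}$ to gain a computable handle on the map. Composing $h_{12}$ with the canonical embedding $W_{12} \hookrightarrow X_{12}$ and the degree two projection $X_{12} \to H_{12}$ of Theorem \ref{thmek:IC12}, I claim the result is exactly the map $t \mapsto b(t) = (r(t),s(t))$ used in the proof of Theorem \ref{thm:UCW12}. This is built into the way that proof runs ELA: the local lift realizing $\jac_{12}^{-1}$ in these coordinates was chosen so that $IC(Y_{12}(t)) = IC_{12}(b(t))$ with $dw/z$ as the distinguished eigenform, which pins the image in $X_{12}$ to the preimage of $b(t)$. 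Consequently the coordinate $s$ on $H_{12}$ restricts to a rational function on $W_{12}$ that pulls back under $h_{12}$ to $s(t) = (t+3)/t$.

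Finally I would run a short function field argument. Since $s \circ h_{12} = (t+3)/t$ is non-constant, $h_{12}$ is non-constant, hence dominant onto the irreducible one-dimensional curve $W_{12}$ (irreducible because $12 \not\equiv 1 \bmod 8$), giving an inclusion of function fields $h_{12}^* : \cc(W_{12}) \hookrightarrow \cc(t)$. The image contains $h_{12}^*(s) = (t+3)/t$, and the relation $t = 3/\left((t+3)/t - 1\right)$ shows that this one element already generates all of $\cc(t)$; therefore $h_{12}^*$ is an isomorphism and $h_{12}$ is birational. The main obstacle is the bookkeeping of the middle step: one must check carefully that pushing $(Y_{12}(t),[dw/z])$ through the embedding into $X_{12}$ and the two-to-one cover $X_{12} \to H_{12}$ returns $b(t)$ itself, not its Galois conjugate under $(\tau_1,\tau_2)\mapsto(\tau_2,\tau_1)$ nor an indeterminate choice of preimage, so that $s$ is genuinely a rational function on $W_{12}$ restricting to $(t+3)/t$. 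Once this identification is secured, dominance and degree one are immediate and require no computation beyond what Theorem \ref{thm:UCW12} already provides.
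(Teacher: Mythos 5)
Your proposal is correct in substance but establishes degree one by a genuinely different mechanism than the paper. The paper's proof composes $h_{12}$ all the way down to $\pp(2,4,6,10)$ via the Igusa--Clebsch map and verifies by an explicit resultant computation that $t \mapsto IC(Y_{12}(t))$ is non-constant and birational onto its image; irreducibility of $W_{12}$ then forces $h_{12}$ to be birational. You instead stop at the $(r,s)$-plane and observe that the pullback of the coordinate $s$ is the M\"obius function $(t+3)/t$, which already generates $\cc(t)$, so the function-field inclusion $h_{12}^*$ is onto. This is cleaner and computation-free, but it buys that simplicity at the cost of the identification you yourself flag: to know that $s$ pulls back to $(t+3)/t$ you must know that the composite $\pp^1 \to W_{12} \hookrightarrow X_{12} \to H_{12}$ equals $b(t)$, and the ELA verification only gives you $IC_{12}(\text{image of }t) = IC(Y_{12}(t)) = IC_{12}(b(t))$. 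Closing that gap requires that $IC_{12} : H_{12} \to \pp(2,4,6,10)$ be generically injective (equivalently, that the deck transformation of the degree-two map $X_{12} \to H_{12}$ coincides with that of $X_{12}$ onto its image in $\M_2$, which is the swap of the two eigenforms); this is part of the Elkies--Kumar structure theorem and is noted in the paper's footnote on the involution $(\tau_1,\tau_2)\mapsto(\tau_2,\tau_1)$, but you should invoke it explicitly rather than leave it as ``bookkeeping.'' Your worry about landing on a Galois-conjugate preimage is actually moot for the same reason: you are composing down to $H_{12}$, where the swap acts trivially, so the only content is the injectivity of $IC_{12}$. With that one citation supplied, your argument is complete; your treatment of well-definedness (the double zero of $dw/z$ at the point over $w=\infty$) and the final function-field step are both correct and match what the paper uses implicitly.
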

\begin{proof}
By Theorem \ref{thm:UCW12}, the pair $(Y_{12}(t),[dw/z])$ is in
$W_{12}$ for generic $t \in \cc$ and $t \mapsto \left(
Y_{12}(t),[dw/z] \right)$ defines a rational map $h_{12}: \pp^1 \to
W_{12}$.  To check that $h_{12}$ is birational, we compute the
composition $\pp^1 \xrightarrow{h_{12}} W_{12} \to \M_2
\xrightarrow{IC} \pp(2,4,6,10)$ and check (for instance, by computing
appropriate resultants) that it is non-constant and birational onto
its image. Since $W_{12}$ is irreducible \cite{mcmullen:spin}, we
conclude that $h_{12}$ birational.
\end{proof}
As a corollary, we can verify that the polynomial $w_{12}(r,s)$ in
Table \ref{tab:wDrs} gives a birational model for $W_{12}$ by checking
that $b(t)$ defines a birational map from $\pp^1$ to the curve
$w_{12}(r,s) = 0$, yielding the following proposition.
\begin{prop}
The immersion $W_{12} \to \M_2 \xrightarrow{IC} \pp(2,4,6,10)$ factors
through the composition of a birational map
\begin{equation}
W_{12} \to \left\{ (r,s) \in H_{12} : w_{12}(r,s)=0 \right\} \mbox{ where } w_{12}(r,s) = 27r + (8 - 12s - 9s^2 + 13s^3)
\end{equation}
and the map $IC_{12} : \cc^2 \to \pp(2,4,6,10)$ of Equation \ref{eqn:IC12}.
\end{prop}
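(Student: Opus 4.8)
The plan is to exhibit the claimed factorization by combining two maps already in hand: the birational parametrization $h_{12} : \pp^1 \to W_{12}$ of the preceding corollary, and the rational map $b : \pp^1 \to H_{12}$, $b(t) = (r(t),s(t))$, introduced in the proof of Theorem \ref{thm:UCW12}. I would show that $b$ maps $\pp^1$ birationally onto the curve $\{w_{12} = 0\}$ and that $IC_{12} \circ b = IC \circ Y_{12}$ as maps $\pp^1 \to \pp(2,4,6,10)$; the desired factorization is then realized by the birational map $b \circ h_{12}^{-1} : W_{12} \to \{w_{12} = 0\}$.

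First I would record that $IC_{12}(b(t)) = IC(Y_{12}(t))$ for generic $t$. This is precisely condition \ref{ELA:ICs} of the Eigenform Location Algorithm, which held when we ran ELA with $IC_D = IC_{12}$, $a = a(t)$ and $b = b(t)$ in the proof of Theorem \ref{thm:UCW12}. Next I would verify the polynomial identity $w_{12}(r(t),s(t)) = 0$: substituting $r(t) = -(13 + 10t + t^2)/t^3$ and $s(t) = (t+3)/t$ into $w_{12}(r,s) = 27r + 8 - 12s - 9s^2 + 13s^3$ and clearing the common denominator $t^3$, one checks that the resulting numerator vanishes identically in $t$ (a routine comparison of the coefficients of $1, t, t^2, t^3$). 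This shows $b$ maps $\pp^1$ into the curve $\{w_{12} = 0\}$.

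It then remains to see that $b$ is birational onto $\{w_{12} = 0\}$. Since $s(t) = (t+3)/t$ is a M\"obius transformation it is an isomorphism $\pp^1 \to \pp^1_s$ with inverse $t = 3/(s-1)$, so $t$ is recovered rationally from a point of the image. The curve $\{w_{12} = 0\}$ is the graph $27r = -(8 - 12s - 9s^2 + 13s^3)$ over the $s$-line, hence projection to the $s$-coordinate is birational; composing, $b$ is birational onto the entire curve. Finally, $b \circ h_{12}^{-1} : W_{12} \to \{w_{12} = 0\}$ is a composition of birational maps, and it satisfies $IC_{12} \circ (b \circ h_{12}^{-1}) = IC \circ Y_{12} \circ h_{12}^{-1}$, which is exactly the immersion $W_{12} \to \M_2 \xrightarrow{IC} \pp(2,4,6,10)$. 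This yields the stated factorization.

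The main obstacle is not any single computation, since the identities above are routine, but rather assembling the birational data correctly: one must confirm that $b$ is \emph{dominant} onto the full curve $\{w_{12} = 0\}$ rather than a proper subvariety, and that the finitely many exceptional values of $t$ (poles of $r, s$ and base points) do not disturb the generic statement. Both points follow from working on $\pp^1$ and from the rationality of $\{w_{12} = 0\}$ via the $s$-parametrization. Thus the genuine mathematical content has already been carried out in Theorem \ref{thm:UCW12}, and this proposition is the bookkeeping that converts the eigenform statement into the stated birational model.
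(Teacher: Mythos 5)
Your proposal is correct and follows essentially the same route as the paper: the paper also obtains the proposition by combining the birational parametrization $h_{12}$ of $W_{12}$ with the observation that $b(t)=(r(t),s(t))$ maps $\pp^1$ birationally onto the curve $w_{12}(r,s)=0$, with the identity $IC_{12}(b(t))=IC(Y_{12}(t))$ coming from condition \ref{ELA:ICs} in the run of ELA. Your explicit verification that $w_{12}(r(t),s(t))=0$ and that $b$ is birational (via the M\"obius map $s(t)$ and the graph structure of the curve over the $s$-line) simply fills in the routine checks the paper leaves to the reader.
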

We can now complete the proof of Theorem \ref{thm:wDrs} for most $D$.
\begin{proof}[Proof of Theorem \ref{thm:wDrs} (for $D \not \equiv 1 \bmod 8$)]
For each fundamental discriminant $D$ with $1 < D < 100$ and $D \not
\equiv 1 \bmod 8$, we provide two auxiliary computer files: {\sf
  ICDrs.magma} and {\sf certD.magma}.  In {\sf ICDrs.magma} we recall
the parametrization
\[ IC_D : H_D \to \pp(2,4,6,10) \]
defined in \cite{elkieskumar:hms} and satisfying an analogue of
Theorem \ref{thmek:IC12} for $X_D$.\footnote{For several
  discriminants, we change the coordinates given in
  \cite{elkieskumar:hms} by a product of M\"obius transformations on
  $H_D = \cc^2$ to simplify the equation for $W_D$.}  In all of our
examples, $H_D = \cc^2$.  In {\sf certD.magma}, we provide equations
for an algebraic curve $G_D$ over $\qq$ and define rational functions
\[ a_D : G_D \to V \mbox{ and } b_D : G_D \to H_D \]
where $b_D$ is birational onto the curve $w_D(r,s) = 0$ and $IC \circ
Y \circ a_D$ is birational onto its image.  We then call {\sf
  ELA.magma} which carries out ELA with $a = a_D$ and $b = b_D$,
certifying that
\begin{equation}
\label{eqn:GDtoWD}
 h_D(c) = (Y(a_D(c)),[dw/z]) \mbox{ defines a rational map } h_D : G_D \to W_D.
\end{equation}
Each of the steps in ELA is linear algebra in the field of algebraic
functions on $G_D$.  We conclude that the curves $w_D(r,s) =0$, $G_D$
and $W_D$ are birational to one another.
\end{proof}
 
\begin{rmk} 
An important ingredient in our proof of Theorem \ref{thm:wDrs} is an
explicit model of the universal curve over an open subset of $G_D$
(i.e. the function $a_D : G_D \to V$), which is not easy to compute
from $IC_D$ and $w_D(r,s)$.  The numerical sampling technique
described in Section \ref{sec:introduction} that we used to compute
$w_D(r,s)$ can also be used to sample the universal curve over $G_D$
and was used to generate the equations in {\sf certD.magma}.
\end{rmk}

Our proof of Theorem \ref{thm:wDrs} also proves Theorem \ref{thm:gDxy}
giving Weierstrass and plane quartic models for $W_D$ with $D \in
\left\{44,53,56,60,61 \right\}$.
\begin{proof}[Proof of Theorem \ref{thm:gDxy}]
For $D \in \left\{ 44,53,56,60,61 \right\}$, the curve $G_D$ defined
in {\sf certD.magma} is the curve defined by the equation $g_D(x,y) =
0$ (cf. Table \ref{tab:wpqmodels}) and, by the proof of Theorem
\ref{thm:wDrs}, is birational to $W_D$.
\end{proof}
\noindent Our proof of Theorem \ref{thm:wDrs} also proves the second
half of Theorem \ref{thm:rationalwd} concerning irreducible
Weierstrass curves of genus zero.
\begin{prop}
\label{prop:rationalwdoverq}
For $D \leq 41$ with $D \not \equiv 1 \bmod 8$ and $D \neq 21$, the
curve $W_D$ is birational to $\pp^1$ over $\qq$.
\end{prop}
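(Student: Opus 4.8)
The plan is to invoke the classical dichotomy for genus-zero curves over $\qq$: a smooth projective curve of genus zero defined over $\qq$ is isomorphic to $\pp^1_\qq$ precisely when it carries a $\qq$-rational point, and otherwise it is an anisotropic conic. So the proposition reduces to producing a $\qq$-rational point on a smooth model of each $W_D$ in question. The hypotheses supply everything else: for $D \not\equiv 1 \bmod 8$ the curve $W_D$ is irreducible (by the spin invariant \cite{mcmullen:spin}); for $D \leq 41$ its smooth model has genus zero (as recorded in the intro and Table \ref{tab:homeotype}); and by Theorem \ref{thm:wDrs} it is birational \emph{over $\qq$} to the affine plane curve $\wD(r,s)=0$. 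Since birationality over $\qq$ is transitive, it suffices to treat the curve $\wD(r,s)=0$ for each of the nine discriminants $D \in \{5,8,12,13,24,28,29,37,40\}$ meeting the hypotheses.

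First I would, for each such $D$, exhibit an explicit rational parametrization $\pp^1 \to \{\wD(r,s)=0\}$ defined over $\qq$. Such a map is already latent in the proof of Theorem \ref{thm:wDrs}, which produces an auxiliary curve $G_D$ birational to $W_D$ over $\qq$ via the maps $a_D$ and $b_D$; for the genus-zero discriminants one further parametrizes $G_D$ rationally over $\qq$. Concretely, once a single smooth $\qq$-point on the conic underlying $\wD(r,s)=0$ is located, projection from that point yields the isomorphism with $\pp^1_\qq$. The resulting parametrizations can be recorded in the auxiliary computer files and checked mechanically, by verifying that the substitution makes $\wD$ vanish identically and that the map has degree one onto the curve. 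Because a degree-one dominant map $\pp^1 \to \{\wD=0\}$ defined over $\qq$ is itself a certificate of birationality to $\pp^1$ over $\qq$, displaying it settles the claim for that $D$ with no further appeal to the abstract dichotomy.

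The only genuine obstacle is arithmetic rather than geometric. Over $\qq(\sqrt{D})$ every genus-zero component automatically acquires a rational point and hence is birational to $\pp^1$ (this is the first assertion of Theorem \ref{thm:rationalwd}), but over $\qq$ a genus-zero conic may fail to be isotropic: by Hasse--Minkowski this failure occurs exactly when there is a local obstruction at some place. This is precisely the phenomenon forcing the exclusion of $D=21$, where $W_{21}$ is birational to the anisotropic conic $g_{21}(x,y)=0$ and has no $\qq$-point. The substance of the proposition is thus the assertion that no such local obstruction arises for the remaining nine discriminants. Rather than verify isotropy place by place, the cleanest and most robust route is to produce honest $\qq$-rational points—equivalently, the explicit parametrizations above—which disposes of the question unconditionally and independently of the local--global principle.
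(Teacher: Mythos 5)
Your proposal is correct and matches the paper's argument: the paper's proof simply observes that for these nine discriminants the certifying curve $G_D$ constructed in the proof of Theorem \ref{thm:wDrs} is already $\pp^1$ and the maps $a_D$, $b_D$ are defined over $\qq$, which is exactly the explicit $\qq$-rational parametrization you describe as ``latent'' in that proof. Your additional framing via the genus-zero dichotomy and Hasse--Minkowski is accurate context but not needed once the parametrization is exhibited, just as in the paper.
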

\begin{proof}
For these discriminants, $G_D = \pp^1$ and the maps $a_D$, $b_D$
defined in {\sf certD.magma} are defined over $\qq$.
\end{proof}
\begin{prop}
\label{prop:w21conic}
The curve $W_{21}$ has no $\qq$-rational points and is birational over
$\qq$ to the conic $g_{21}(x,y) = 0$ where:
\begin{equation}
\label{eqn:g21}
g_{21}(x,y) = 21 \left( 11 x^2 -182 x-229 \right) +y^2. 
\end{equation}
\end{prop}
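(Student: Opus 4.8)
The proof naturally splits into two tasks: producing an explicit conic model of $W_{21}$ defined over $\qq$, and showing that this conic carries no $\qq$-rational point. Since $21$ is a fundamental discriminant with $21 \equiv 5 \bmod 8$, the curve $W_{21}$ is irreducible \cite{mcmullen:spin}, and the proof of Theorem \ref{thm:wDrs} supplies an explicit curve $G_{21}$ over $\qq$ together with $\qq$-rational maps $a_{21}$ and $b_{21}$ certifying that $W_{21}$, $G_{21}$, and the plane curve $w_{21}(r,s)=0$ are mutually $\qq$-birational. The latter curve has geometric genus zero, admitting a rational parametrization over $\qq(\sqrt{21})$ (recorded in the auxiliary files, and reflecting the first assertion of Theorem \ref{thm:rationalwd}). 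Thus $W_{21}$ is a geometrically integral, smooth projective curve of genus zero over $\qq$; via its anticanonical embedding it is $\qq$-isomorphic to a smooth plane conic, and it is $\qq$-isomorphic to $\pp^1$ exactly when it has a $\qq$-rational point.

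To pin down the conic, I would compute the anticanonical model of $\overline{W}_{21}$ directly from the model $w_{21}(r,s)=0$: a $\qq$-rational basis of the degree-two linear system $|{-}K|$ maps $\overline{W}_{21}$ to a conic in $\pp^2$, and a $\qq$-linear change of coordinates normalizes it to the equation $g_{21}(x,y)=0$ of Equation \ref{eqn:g21}. Concretely this amounts to exhibiting mutually inverse $\qq$-rational maps between $w_{21}(r,s)=0$ (equivalently $G_{21}$) and $g_{21}(x,y)=0$ and verifying the defining identities by substitution; these maps are provided in the auxiliary files. I expect this descent-and-matching step to be the main obstacle: the parametrization is available only over $\qq(\sqrt{21})$, so the content is to produce a model and isomorphism genuinely defined over $\qq$ and to confirm that the resulting conic is $\qq$-equivalent to the normalized $g_{21}$, rather than merely equivalent over the quadratic field.

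It then remains to show that $g_{21}(x,y)=0$ has no $\qq$-point, which I would settle by a local obstruction at $3$ together with the Hasse--Minkowski theorem for conics. Homogenizing $g_{21}$ gives the ternary quadratic form $231 x^2 + y^2 - 3822 xz - 4809 z^2$, and completing the square in $x$ diagonalizes it over $\qq$, after rescaling a variable by a square, to $231 x^2 + y^2 - 77 z^2$. A primitive $\zz_3$-solution of $231 x^2 + y^2 - 77 z^2 = 0$ would, reducing modulo $3$, satisfy $y^2 \equiv 77 z^2 \equiv 2 z^2 \bmod 3$; since $2$ is not a square modulo $3$, this forces $3 \mid z$ and hence $3 \mid y$. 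Then $9$ divides $77 z^2 - y^2 = 231 x^2 = 3 \cdot 77 x^2$, so $3 \mid 77 x^2$ and thus $3 \mid x$, contradicting primitivity. Hence the form is anisotropic over $\qq_3$, so the conic has no $\qq_3$-rational point and, by Hasse--Minkowski, no $\qq$-rational point. Since $W_{21}$ is $\qq$-birational to this conic, $W_{21}$ has no $\qq$-rational point, completing the proof.
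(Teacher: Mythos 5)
Your proposal is correct, and it diverges from the paper's proof in both halves, in instructive ways. For the birationality over $\qq$, the paper does not need the anticanonical-embedding step you propose: the curve $G_{21}$ certified in {\sf cert21.magma} during the proof of Theorem \ref{thm:wDrs} \emph{is already} the conic $g_{21}(x,y)=0$, with the maps $a_{21}$ and $b_{21}$ defined over $\qq$, so the $\qq$-birational equivalence of $W_{21}$ with $g_{21}(x,y)=0$ is immediate from that certification. Your route --- descend from the $\qq(\sqrt{21})$-parametrization of $w_{21}(r,s)=0$ to a $\qq$-rational conic via $|{-}K|$ and then match it with the normalized $g_{21}$ --- is sound but reconstructs data the certification already hands you, and you correctly flag the matching step as the place where real computation would be needed. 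For the non-existence of rational points, your argument is actually more complete than the paper's: the paper only says ``homogenize and reduce modulo $3$,'' but the reduction of $231X^2+Y^2-3822XZ-4809Z^2$ modulo $3$ merely forces $3\mid Y$, and one must continue modulo $9$ (or run the $3$-adic descent) to reach a contradiction. Your diagonalization to $231x^2+y^2-77z^2$ (which checks out: the $Z^2$-coefficient after completing the square is $-226800/11\sim -77$ modulo squares) followed by the primitive-solution descent at $3$ and Hasse--Minkowski supplies exactly the missing detail, at the cost of a slightly longer computation than the paper's one-line appeal to integrality of a projective point.
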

\begin{proof}
The curve $G_D$ defined in {\sf cert21.magma} is the conic defined by
$g_{21}(x,y) = 0$ and the maps $a_D$ and $b_D$ defined {\sf
  cert21.magma} are defined over $\qq$.  From the proof of Theorem
\ref{thm:wDrs}, we see that $W_{21}$ is birational over $\qq$ to the
curve $g_{21}(x,y) = 0$.  The closure of the conic $g_{21}(x,y) = 0$
in $\pp^2$ has no integer points, as can be seen by homogenizing
Equation \ref{eqn:g21} and reducing modulo $3$.  We conclude that
$W_{21}$ has no $\qq$-rational points.
\end{proof}

\section{Cusps and spin components}
\label{sec:cuspspin}
Using the technique described in Section \ref{sec:wcurvecertify} for
verifying our equations for irreducible $W_D$, we can also show that
the curve $w_D^0(r,s) = 0$ parametrizes {\em an} irreducible component
of reducible $W_D$.  We now turn to distinguishing the components of
$W_D$ by spin.

\paragraph{Cusps on Weierstrass curves.}  
Let $\overline{\M}_2$ be the Deligne-Mumford compactification of
$\M_2$ by stable curves and let $\overline{W}_D$ be the smooth
projective curve birational to $W_D$.  The curve $\overline{W}_D$ is
obtained from $W_D$ by smoothing orbifold points and filling in
finitely many cusps.  Since $\overline{W}_D$ and $\overline{\M}_2$ are
projective varieties, the map $W_D \to \M_2$ extends to an algebraic
map from $\overline{W}_D$ to the coarse space associated to
$\overline{\M}_2$.  The cusps of $\overline{W}_D$ are sent into
$\partial \overline{\M}_2$ under this map.

\paragraph{Locating cusps in birational models.}
The composition of $W_D \to \M_2 \xrightarrow{IC} \pp(2,4,6,10)$ also
extends to a map $\overline{W}_D \to \pp(2,4,6,10)$ and this extension
sends the cusps into the hyperplane $I_{10}=0$.  Given an explicit
algebraic curve $G_D$ and a rational map $a_D : G_D \to V$ giving rise
to the birational map $h_D : G_D \to W_D$ (cf. the proof of Theorem
\ref{thm:wDrs}), we can locate the smooth points in $G_D$
corresponding to cusps of $W_D$ by determining the poles of the
algebraic function
\begin{equation}
\label{eqn:I2I10}
c \mapsto (I_2 (Y(a_D(c)))^5 / I_{10}(Y(a_D(c))).
\end{equation}
 
\paragraph{Splitting prototypes.}  
The cusps on $W_D$ are enumerated in \cite{mcmullen:spin}.  A {\em
  splitting prototype of discriminant $D$} is a quadruple $(a,b,c,e)
\in \zz^4$ satisfying
\begin{equation}
\begin{array}{lll}
D = e^2+4bc, & 0 \leq a < \gcd(b,c), & c+e<b, \\
0 < b, &  0 < c,\mbox{ and} & \gcd(a,b,c,e) = 1.
\end{array}
\end{equation}
For example, the quadruple $(a,b,c,e) = (0,1,3,0)$ is a splitting
prototype of discriminant 12.
\begin{thm}[McMullen]
If $D$ is not a square, then the cusps of $W_D$ are in bijection with
the set of splitting prototypes of discriminant $D$.
\end{thm}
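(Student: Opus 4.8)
The plan is to realize each cusp of $W_D$ as a completely periodic direction on the underlying flat surface and then put that direction into an arithmetic normal form recorded by a splitting prototype $(a,b,c,e)$. The whole argument is one of flat geometry constrained by real multiplication, so the tables from Section~\ref{sec:jacRM} on $\ord_D$-adapted symplectic bases will be the arithmetic input.

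First I would pass from the analytic to the dynamical description of the cusps. Writing $W_D = \hh/\Gamma$ for the Veech group $\Gamma$ uniformizing the Teichm\"uller curve, the cusps are the $\Gamma$-orbits of parabolic fixed points on $\partial\hh$. By Veech's dichotomy a parabolic element of the affine automorphism group of a lattice surface $(X,\omega)$ fixes a \emph{completely periodic direction}, i.e.\ a direction in which $(X,\omega)$ decomposes into metric cylinders of commensurable modulus; conversely each such direction produces a parabolic. I would thus identify the cusps of $W_D$ with completely periodic directions on a surface $(X,\omega)\in\Omega\M_2(2)$ carrying an $\ord_D$-eigenform, taken modulo the affine group and the $\SL_2(\rr)$-action. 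An Euler-characteristic and separatrix count for a single double zero on a genus two surface limits a periodic decomposition to one or two cylinders, and I would show that after applying the affine group every cusp acquires a representative that splits into exactly two horizontal cylinders $C_1,C_2$ glued along saddle connections through the double zero; their core curves $\alpha_1,\alpha_2$ span a Lagrangian subspace of $H_1(X,\zz)$.

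The arithmetic constraint then comes from the eigenform condition. Because $\omega$ is an $\ord_D$-eigenform, $\ord_D$ acts on the Lagrangian spanned by $\alpha_1,\alpha_2$ together with a complementary symplectic pair, and the ratio of the two cylinder moduli is forced to lie in $K=\qq(\sqrt D)$. Normalizing $\alpha_1,\alpha_2$ to an $\ord_D$-adapted basis (Proposition~\ref{prop:eigenbasis}) and rescaling by $\SL_2(\rr)$, I would express a generator of $\ord_D$ as $\lambda=(e+\sqrt D)/2$ with $\lambda^2=e\lambda+bc$, which is precisely the relation $D=e^2+4bc$. Here $b,c$ become the normalized width and height data of the cylinders, $e$ the trace datum, and $a$ the residual horizontal twist of $C_1$ relative to $C_2$, well defined only modulo $\gcd(b,c)$ once the geodesic-flow and Dehn-twist normalizations are fixed. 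Requiring $(X,\omega)$ to be reduced under the remaining symmetries yields the inequalities $0<b$, $0<c$, $c+e<b$, and $0\le a<\gcd(b,c)$, while primitivity of the eigenform lattice gives $\gcd(a,b,c,e)=1$.

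The main obstacle is bijectivity, equivalently uniqueness of the normal form. I would need to show that two reduced prototypes never yield affinely equivalent periodic surfaces (injectivity) and that every reduced prototype is realized by an actual point of $W_D$ (surjectivity). Injectivity is the delicate bookkeeping step: it requires checking that the listed inequalities cut out a strict fundamental domain for the group generated by the horizontal Dehn twist, the hyperelliptic involution, and the stabilizer of the splitting inside $\SL_2(\ord_D)$, acting on the cylinder data — one must rule out accidental coincidences coming from units of $\ord_D$ and from the $a$-twist wrapping around $\gcd(b,c)$. Surjectivity is the easier direction: from each prototype I would build the explicit L-shaped eigenform of Figure~\ref{fig:LTable}, with $\lambda=(e+\sqrt D)/2$ and $b=(D-e^2)/4$, and verify directly that it lies in $\Omega\M_2(2)$ with the correct discriminant, completing the correspondence.
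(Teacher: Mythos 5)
The first thing to say is that the paper does not prove this statement at all: it is imported verbatim from McMullen, with the remark ``The cusps on $W_D$ are enumerated in \cite{mcmullen:spin}.'' So your proposal has to be measured against McMullen's original argument, and at the level of strategy it does track that argument correctly -- cusps correspond to periodic directions modulo the affine group, the stratum $\Omega\M_2(2)$ bounds a cylinder decomposition by $g+|\Sigma|-1=2$ cylinders, and a two-cylinder splitting is reduced to the arithmetic normal form $(a,b,c,e)$ with $\lambda=(e+\sqrt{D})/2$ satisfying $\lambda^2=e\lambda+bc$, i.e.\ $D=e^2+4bc$, with $c+e<b$ expressing $\lambda<b$.

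The difficulty is that the two load-bearing steps are both deferred rather than proved. First, the hypothesis that $D$ is not a square never actually enters your argument, yet it is exactly what rules out the one-cylinder case: a one-cylinder periodic direction on an $\ord_D$-eigenform forces the periods into a rank-two $\qq$-subspace and makes the surface square-tiled, so $D$ would be a perfect square. Without this exclusion the assignment of a prototype to a cusp is not even well defined, and you only assert that you ``would show'' every cusp has a two-cylinder representative. Second, injectivity -- that the inequalities $0\le a<\gcd(b,c)$, $c+e<b$, $0<b$, $0<c$, $\gcd(a,b,c,e)=1$ cut out a strict fundamental domain for the stabilizer of the splitting (the twists in the two cylinders, the hyperelliptic involution, and the units of $\ord_D$) -- is flagged as ``delicate bookkeeping'' and not carried out; this is the other substantive half of McMullen's theorem, and without it you have at best a surjection rather than a bijection. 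Finally, your surjectivity step leans on the wrong construction: the L-shaped polygon of Figure \ref{fig:LTable} is a single generator of $W_D$ with $e\in\{0,-1\}$, not a family indexed by prototypes; the per-prototype realization is the connected sum of two tori $(\cc/\Lambda_1,dz)$ and $(\cc/\Lambda_2,dz)$ along a slit of length $\lambda$, with $\Lambda_2=\zz(b,0)\oplus\zz(a,c)$, after which one must still verify that the cusp so produced recovers the prototype one started from. As written, the proposal is an accurate roadmap to the cited proof, but the steps it omits are precisely the ones that constitute the theorem.
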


\paragraph{Stable limits and Igusa-Clebsch invariants.} 
Algebraic models of the singular curves corresponding to cusps of
$W_D$ are described in \cite{bainbridge:eulerchar} (see also
\cite{bouwmoeller:nonarithmetic}, Proposition 3.2).  From these models
it is easy to prove the following.
\begin{prop}
\label{prop:ICprot}
Let $(Y_n,[\omega_n]) \in W_D$ be a sequence tending to the cusp with
splitting prototype $p = (a,b,c,e)$.  Then $\lim_{n \to \infty}
IC(Y_n) = IC(p)$ where
\begin{multline}
\label{eqn:ICprot}
IC(p) = (12b^4 - 8b^3c + 12b^2c^2 - 4b^2e^2 + 24bce^2 + 6e^4 +e (3 e^2+3 D-4b^2)\sqrt{D} : b^4(e + \sqrt{D})^4 : \\
b^4(e + \sqrt{D})^4(4b^4 - 4b^3c + 4b^2c^2 - 2b^2e^2 + 8bce^2 + 2e^4 +e (e^2+ D- 2b^2)\sqrt{D} ) : 0).
\end{multline}
\end{prop}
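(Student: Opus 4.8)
The plan is to reduce the limit to a finite invariant-theoretic computation on an explicit degenerate Weierstrass model of the stable limit curve. The key simplification is already available: as noted just above in this section, the composition $W_D \to \M_2 \xrightarrow{IC} \pp(2,4,6,10)$ extends to an algebraic map $\overline{W}_D \to \pp(2,4,6,10)$ carrying the cusps into the hyperplane $I_{10} = 0$. Hence the limit $\lim_{n\to\infty} IC(Y_n)$ exists, is independent of the chosen sequence $(Y_n,[\omega_n])$, and equals the value of this extended map at the cusp of $\overline{W}_D$ with prototype $p$. The task is therefore to evaluate that value, and the natural way to do so is to exhibit an explicit singular curve representing the cusp and compute its Igusa-Clebsch invariants, recalling from Section \ref{sec:eformverify} that $I_2, I_4, I_6$ are polynomials in the coefficients of a model $z^2 = f_a(w)$ and that $I_{10} = \Disc(f_a)$.

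First I would import from \cite{bainbridge:eulerchar} (equivalently \cite{bouwmoeller:nonarithmetic}, Proposition 3.2) the explicit stable curve $X_p$ attached to the prototype $p = (a,b,c,e)$. Along the direction determined by $p$, the $L$-shaped flat picture of Figure \ref{fig:LTable} degenerates to a curve of geometric genus one carrying a single node, and the eigenform $\omega_n$ limits to a form whose real-multiplication eigenvalue is $\lambda = (e+\sqrt{D})/2$. Normalizing the limiting hyperelliptic model so that $dw/z$ and $w \cdot dw/z$ are the eigenforms, as in the body of the paper, one reads off the branch points of $z^2 = f_p(w)$ — i.e. the roots of $f_p$ — as explicit expressions in $b$, $c$, $e$ and $\sqrt{D}$, with the combination $e + \sqrt{D} = 2\lambda$ entering naturally. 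Exactly two branch points coincide, recording the node, so $\Disc(f_p) = 0$ and the fourth coordinate of $IC(p)$ is forced to vanish.

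With $f_p$ in hand the remainder is a direct substitution. I would evaluate $I_2, I_4, I_6$ on $f_p$ using the standard formulas (the file {\sf IIa.magma} referenced in the paper), and then rescale by the weighted-projective action $t \cdot (I_2,I_4,I_6,I_{10}) = (t^2 I_2, t^4 I_4, t^6 I_6, t^{10} I_{10})$ so that the weight-four coordinate becomes $b^4(e+\sqrt{D})^4$, matching the normalization chosen in Equation \ref{eqn:ICprot}. This is best carried out in a computer algebra system and verifies the displayed polynomials coordinate by coordinate; the presence of $\sqrt{D}$ and of $b^4(e+\sqrt{D})^4$ as the weight-four entry is a useful consistency check on the branch-point computation.

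The main obstacle is neither the algebra nor the existence of the limit, but the identification step: matching the value of the abstract extended map $\overline{W}_D \to \pp(2,4,6,10)$ at the cusp with the invariants of the concrete model $f_p$. This requires (i) the precise description of the point of $\partial\overline{\M}_2$ approached by the sequence, which is exactly what \cite{bainbridge:eulerchar} supplies, and (ii) a check that in the degenerating family the two relevant branch points collide at a finite value rather than escaping toward $\infty$, so that the limiting class in $\pp(2,4,6,10)$ is the mild boundary point with $I_4 = b^4(e+\sqrt{D})^4 \neq 0$ (a single node, geometric genus one) and not a deeper stratum. Once $X_p$ is pinned down this way, the computation above completes the proof.
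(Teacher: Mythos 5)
Your proposal follows exactly the route the paper takes: the paper's entire proof is to cite the explicit algebraic models of the stable curves at the cusps from \cite{bainbridge:eulerchar} (and \cite{bouwmoeller:nonarithmetic}, Proposition 3.2) and assert that Equation \ref{eqn:ICprot} follows from evaluating the Igusa--Clebsch invariants on those degenerate hyperelliptic models, with $I_{10}=\Disc(f_p)=0$ and the limit being well defined because $IC$ extends to $\overline{W}_D$. You have simply made explicit the steps the paper leaves to the reader, so there is nothing further to add.
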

\noindent For instance, with $Y_{12}(t)$ the algebraic curve defined
by Equation \ref{eqn:w12UC} we have
\[ \lim_{t \to \infty} IC(Y_{12}(t)) = (96 : 289 : 8092 : 0) = IC\left( (0,1,3,0) \right). \]

\paragraph{Spin invariant.}  
Now suppose $D \equiv 1 \bmod 8$.  For such discriminants, the curve
$W_D$ has two irreducible components $W_D^\epsilon$ distinguished by a
spin invariant $\epsilon \in \zz/2\zz$.
\begin{thm}[McMullen]
For a prototype $p = (a,b,c,e)$ of discriminant $D$ with $D \equiv 1
\bmod 8$, the cusp corresponding to $p$ lies on the spin
$\epsilon(p)$-component of $W_D$ where
\begin{equation}
\label{eqn:protspin}
\epsilon(p) = \frac{e-f}{2} + (c+1)(a+b+ab) \bmod 2
\end{equation}
and $f$ is the conductor of $\ord_D$.\footnote{The conductor of
  $\ord_D$ is the index of $\ord_D$ in the maximal order of $\ord_D
  \otimes_\zz \qq$.  Rings with fundamental discriminants such as
  those considered in this paper have conductor $f=1$.}
\end{thm}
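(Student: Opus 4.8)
The plan is to exploit that the spin invariant $\epsilon$ is a deformation invariant, constant on each connected component of $W_D$, so that its value on the component containing a given cusp can be read off from a single degenerate flat surface lying over that cusp. Concretely, $\epsilon$ is an Arf-type invariant of a $\zz/2\zz$-valued quadratic form $q$ refining the intersection pairing on $H_1(X,\zz/2\zz)$ that is attached to the eigenform $(X,\omega)$: since $\omega$ has a single zero of \emph{even} order, the flat structure determines a spin structure through the winding-number (index) function $q(\gamma)=\operatorname{ind}_\omega(\gamma)+1\bmod 2$, and the real multiplication by $\ord_D$ — together with the splitting of $2$ in $\ord_D$ forced by $D\equiv 1\bmod 8$ — singles out the rank-two $\ord_D/2$-eigendata on which this form is evaluated to produce $\epsilon$ (cf. \cite{mcmullen:spin}). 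Because this Arf invariant is a locally constant homological quantity, it varies continuously in families and is integer valued, so it agrees with its limit at each cusp; it therefore suffices to compute it on the explicit flat limit associated to the prototype $p=(a,b,c,e)$.

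First I would build the flat model at the cusp. Following the description of the cusps of $W_D$ in \cite{mcmullen:spin} and the stable limits in \cite{bainbridge:eulerchar} (see also \cite{bouwmoeller:nonarithmetic}, Proposition 3.2), the surface over the cusp $p$ is assembled from two horizontal cylinders whose circumferences and heights are the integers and multiples of $\lambda=(e+\sqrt{D})/2$ recorded by $b$ and $c$, glued with a horizontal twist measured by $a\bmod\gcd(b,c)$; the relation $D=e^2+4bc$ is precisely the area relation for this configuration. On this model I would fix a basis of $H_1(X,\zz/2\zz)$ adapted to the real-multiplication eigendata, whose cycles are realized by core curves of the two cylinders and by transversals crossing between them.

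Next I would evaluate $q$ on this adapted basis. The cylinder core curves are straight horizontal geodesics, so their tangent framing is constant, $\operatorname{ind}_\omega=0$, and $q=1$ on them; all of the arithmetic is therefore carried by the transversals, whose indices I would compute by counting the half-turns their tangent direction makes as they pass the order-two zero and are reglued across the twisted cylinders. This expresses $q$ on the transversals as explicit functions of $a,b,c,e\bmod 2$. Feeding these values into the Arf invariant of the induced quadratic form and reducing with $D=e^2+4bc$ together with the parity of the conductor $f$ (which enters through the parity of $\lambda$, and is legitimate since $e\equiv D\equiv 1\bmod 2$ makes $e-f$ even) should collapse to $\epsilon(p)=\tfrac{e-f}{2}+(c+1)(a+b+ab)\bmod 2$.

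The main obstacle is the winding-number bookkeeping for the transversals: one must track precisely how the tangent of each transversal turns around the double zero while the saddle connections are identified, and fold in both the horizontal twist $a$ and the half-integral height $\lambda$ without sign or normalization errors. Pinning down the exact representative that produces the $\tfrac{e-f}{2}$ term — rather than some other additive constant — is the delicate point, and I would control it by calibrating against a base prototype whose component is already known and by checking invariance under the admissible changes of basis and of prototype. The appearance of the conductor, and the restriction to $D\equiv 1\bmod 8$ where $2$ splits and the eigendata is defined, are exactly what make the formula meaningful, and they must be threaded carefully through every congruence.
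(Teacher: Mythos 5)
The first thing to note is that the paper does not prove this statement at all: it is quoted, with attribution, from McMullen's work on discriminant and spin (\cite{mcmullen:spin}) and is used as a black box in the proof of Theorem \ref{thm:wDers} to decide which spin component a given cusp lies on. So there is no internal proof to compare your proposal against.

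Judged on its own terms, your outline is the right one, and it is essentially McMullen's strategy: the spin invariant is an Arf-type invariant of the quadratic form $q(\gamma)=\operatorname{ind}_\omega(\gamma)+1\bmod 2$ restricted to the rank-two piece of $H_1(X,\zz/2\zz)$ cut out by the splitting of $2$ in $\ord_D$ (which is exactly where $D\equiv 1\bmod 8$ enters); it is locally constant; and one evaluates it on a two-cylinder surface realizing the prototype $(a,b,c,e)$. Two caveats keep this from being a proof as written. First, the entire content of the formula --- the term $(c+1)(a+b+ab)$ and the constant $\frac{e-f}{2}$ --- lives in the winding-number computation for the transversal cycles that you explicitly defer; nothing short of carrying out that bookkeeping produces the polynomial in $a,b,c$. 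Second, your fallback of ``calibrating against a base prototype whose component is already known'' cannot close the gap: a single calibration pins down at most an overall additive constant, not the dependence on $a$ and $c$, and there is no prior, independent labelling of the components to calibrate against --- the components are \emph{defined} to be distinguished by the value of this invariant, so the formula must be derived rather than fitted. A smaller imprecision: the invariant is computed on a smooth surface near the cusp admitting the prototype cylinder decomposition, not on the nodal stable curve itself, though local constancy makes this harmless.
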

We are now ready to prove Theorem \ref{thm:wDers} and complete the proof of Theorem \ref{thm:wDrs}.
\begin{proof}[Proof of Theorem \ref{thm:wDers}]
For each fundamental discriminant $1 < D < 100$ with $D \equiv 1 \bmod
8$, we provide computer files {\sf ICDrs.magma} and {\sf certD.magma}.
In {\sf ICDrs.magma} we recall the map $IC_D$ in
\cite{elkieskumar:hms} and in {\sf certD.magma} we define an algebraic
curve $G_D^0$ and rational functions
\[ a_D^0: G_D^0 \to V \mbox{ and } b_D^0 : G_D^0 \to H_D \]
so that $IC \circ Y \circ a_D^0$ is birational onto its image and
$b_D^0$ is birational onto the curve $w_D^0(r,s)=0$.  As in the proof
of Theorem \ref{thm:wDrs}, we then call {\sf ELA.magma} which
implements the Eigenform Location Algorithm verifying that
\begin{equation}
\label{eqn:hD0}
 h_D^0(c) = (Y(a_D^0(c)),[dw/z]) \mbox{ defines a rational map } h_D^0 : G_D^0 \to W_D.
\end{equation}
We conclude that $w_D^0(r,s) =0$ is birational to an irreducible
component of $W_D$.

We then identify a smooth point $c \in G_D^0$ and call {\sf
  spin\_check.magma} which checks that $c$ corresponds to a cusp of
$W_D$ (i.e. $I_2^5/I_{10}$ has a pole at $c$), identifies the
splitting prototypes $p$ of discriminant $D$ satisfying $IC(p) =
IC_D(b_D(c))$ and verifies that they all have even spin using Equation
\ref{eqn:protspin}.  This shows that the curve $w_D^0(r,s)=0$ is
birational to $W_D^0$.

Applying the non-trivial field automorphism of $\qq(\sqrt{D})$ to all
of the equations in {\sf certD.magma} gives a curve $G_D^1$ and maps
$a_D^1$, $b_D^1$ and $h_D^1$.  Since the equations in {\sf
  ICDrs.magma} have coefficients in $\qq$, ELA with input $a = a_D^1$
and $b=b_D^1$ will return the Galois conjugates of the eigenforms
returned by ELA with input $a=a_D^0$ and $b=b_D^0$.  We conclude that
the Galois conjugate $w_D^1$ of $w_D^0$ defines a curve birational to
another component of $W_D$.  We verify that this component is $W_D^1$
by the method above applied to the point in $G_D^1$ Galois conjugate
to $c \in G_D^0$.
\end{proof}

\begin{proof}[Proof of Theorem \ref{thm:wDrs} (for $D \equiv 1 \bmod 8$).] 
For $D \equiv 1 \bmod 8$, the fact that $w_D(r,s)=0$ gives a birational model for $W_D$ follows from Theorem \ref{thm:wDers} and the identity $w_D(r,s) = w_D^0(r,s) w_D^1(r,s)$, which most computer algebra systems will readily verify.
\end{proof}
\begin{rmk}
Some care has to be taken when choosing the point $c \in G_D^0$ in the
proof of Theorem \ref{thm:wDers} since the stable limit $h_D^0(c)$
does not always uniquely identify the corresponding splitting
prototype.  For instance, the first coordinate $a$ in the splitting
prototype does not affect the stable limit, as reflected by the fact
that $a$ does not appear on the right hand side of Equation
\ref{eqn:ICprot}.
\end{rmk}
We can combine the parametrization $h_D^0$ and its Galois conjugate
$h_D^1$ used in the proof of Theorem \ref{thm:wDers} into a birational
map
\begin{equation}
\label{eqn:hDred}
h_D : G_D = G_D^0 \sqcup G_D^1 \to W_D = W_D^0 \sqcup W_D^1.
\end{equation}
We will use $h_D$ in the next section to give biregular models of
reducible $\overline{W}_D$ for certain $D$ in the next section.

Our proof of Theorem \ref{thm:wDers} also establishes Theorem
\ref{thm:gDexy} which gives Weierstrass models for the components of
$W_{57}$, $W_{65}$ and $W_{73}$.
\begin{proof}[Proof of Theorem \ref{thm:gDexy}]
For $D \in \left\{ 57,65,73 \right\}$, the curve $G_D^0$ defined in
{\sf certD.magma} is the curve defined by $g_D^0(x,y) = 0$.  In the
proof of Theorem \ref{thm:wDers}, we saw that $W_D^0$ is birational to
$G_D^0$ and $W_D^1$ is birational to the Galois conjugate of $G_D^0$.
\end{proof}
We can also complete the proof of Theorem \ref{thm:rationalwd}
concerning Weierstrass curves of genus zero.
\begin{proof}[Proof of Theorem \ref{thm:rationalwd}]
For $D \leq 41$ with $D \equiv 1 \bmod 8$, the curve $G_D^0$ defined
in {\sf certD.magma} is $\pp^1$ and the maps $a_D$ and $b_D$ are
defined over $\qq(\sqrt{D})$.  This shows that the components of $W_D$
are birational to $\pp^1$ over $\qq(\sqrt{D})$ for such discriminants.
The remaining claims made in Theorem \ref{thm:rationalwd} are
established in Propositions \ref{prop:rationalwdoverq} and
\ref{prop:w21conic}.
\end{proof}

\section{Arithmetic geometry of Weierstrass curves}
\label{sec:arithmetic}
In this section, we study the arithmetic geometry of our examples of
Weierstrass curves and prove the remaining Theorems stated in Section
\ref{sec:introduction}.

\paragraph{Biregular models for Weierstrass curves.}  
For each fundamental discriminant $1 < D < 100$, we have now given a
birational parametrization $h_D$ of $W_D$ by an explicit algbraic
curve $G_D$ (cf. proofs of Theorems \ref{thm:wDrs} and
\ref{thm:wDers}).  The curve $G_D$ and parametrization $h_D : G_D \to
W_D$ are defined the auxiliary computer files.

Many of our birational models for small genus $W_D$ easily extend to
biregular models for the smooth, projective curve $\overline{W}_D$
birational to $W_D$.  For $D \leq 41$ with $D \neq 21$, $G_D$ is a
union of $k=1$ or $2$ projective $t$-lines and is already smooth and
projective, and $h_D$ extends to a biregular map $h_D :
\overline{G}_D=G_D \to \overline{W}_D$.  For $D \in \left\{
21,44,56,57,60,65,73 \right\}$, each irreducible component of $G_D$ is
an affine plane curve with smooth closure in $\pp^2$.  The birational
map $h_D$ extends to a biregular map $h_D : \overline{G}_D \to
\overline{W}_D$ where the irreducible components of $\overline{G}_D$
are disjoint and equal to the closures of irreducible components of
$G_D$ in $\pp^2$.  For $D \in \left\{ 53,61 \right\}$ the curve $G_D$
is an irreducible affine curve of genus two and has singular closure
in $\pp^2$.  The closure $\overline{G}_D$ of the algebraic set
\begin{equation}
\label{eqn:genustwoprojmodel}
 \left\{ \left( (x : y : 1) , (1/x : y/x^3 : 1) \right) : g_D(x,y) = 0, x \neq 0 \right\} \subset \pp^2 \times \pp^2
\end{equation}
is smooth, projective and birational to $G_D$ in an obvious way, and
the birational map $h_D$ naturally extends to a biregular map $h_D :
\overline{G}_D \to \overline{W}_D$.

For the remainder of this section, we will identify $\overline{W}_D$
for these discriminants ($D \leq 73$ with $D \neq 69$) with the
biregular models described above via the biregular map $h_D$.

\paragraph{Singular primes and primes of bad reduction.} 
Now that we have given smooth, projective models over $\zz$ for
several Weierstrass curves, we can study their primes of singular and
bad reduction.  For general discussion of these notions we refer the
reader to \cite{liu:alggeometry} (in particular \S 10.1.2) and
\cite{dalawat:badreduction}.  For an affine plane curve $C$ defined by
$g \in \zz[x,y]$ and a prime $p \in \zz$, we say that $p$ is a {\em
  prime of singular reduction for $C$} if the polynomial equations
\[ g = 0, \partial g/\partial x =0 \mbox{ and } \partial g / \partial y =0\]
have a simultaneous solution in an algebraically closed field of
characterstic $p$.  For a projective curve $C$ defined over $\zz$ and
covered by plane curves $C_1,\dots,C_n$ defined by polynomials
$g_1,\dots,g_n \in \zz[x,y]$, we will say that $p$ is a {\em prime of
  singular reduction for $C$} if $p$ is a prime of singular reduction
for at least one of the curves $C_k$.
\begin{table}
\begin{tabular}{cc}
\toprule \als
$D$ & Singular primes for $\overline{W}_D$ \\ \als
\midrule \als
$21$ & $\left\{ 2,3,5,7 \right\}$ \\ \als
$44$ & $\left\{2, 5, 11\right\}$ \\ \als
$53$ & $\left\{ 2,11,13,53 \right\}$ \\ \als
$56$ & $\left\{ 2,5,7,13 \right\}$ \\ \als
$60$ & $\left\{ 2, 3, 5, 7, 11 \right\}$ \\ \als
$61$ & $\left\{ 2,3,5,13,61 \right\}$ \\\als
\bottomrule
\end{tabular}
\sfcaption{ \label{tab:singularprimes} For $D \in \left\{
  44,53,56,60,61 \right\}$, the birational model $g_D(x,y) = 0$ for
  the Weierstrass curve $\overline{W}_D$ has a singularity at the
  prime $p$ for the primes listed above.}
\end{table}
For an affine or projective curve $C$ defined over $\qq$ and a prime
$p \in \zz$, we will call $p$ a prime of {\em bad reduction} for $C$
if $p$ is a singular prime for every curve $C'$ defined over $\zz$ and
biregular to $C$ over $\qq$.  In particular, the primes of singular
reduction for {\em any} integral model of $C$ contain the primes of
bad reduction of $C$.

\paragraph{Singular primes of low, positive genus Weierstrass curves.}  
As we demonstrate in our next proposition, the primes of singular
reduction for conic and hyperelliptic Weierstrass curves can be
computed using discriminants and the primes of singular reduction for
our genus three Weierstrass curves can be computed using elimination
ideals.
\begin{thm}
\label{thm:singularprimelist}
For $D \in \left\{ 21,44,53,56,60,61 \right\}$, the primes of singular
reduction for $\overline{W}_D$ are those listed in Table
\ref{tab:singularprimes}.
\end{thm}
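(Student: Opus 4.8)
The plan is to reduce the statement, for each $D$, to a single discriminant or elimination-ideal computation over $\zz$ performed on the explicit biregular model of $\overline{W}_D$ fixed in Section \ref{sec:arithmetic}. In each case the primes of singular reduction are exactly the prime divisors of an integer invariant $\Delta_D$ attached to the defining equation(s): I would produce $\Delta_D$, factor it, and match its prime divisors against Table \ref{tab:singularprimes}. To obtain equality rather than mere containment, I need the two-sided statement that the reduction at $p$ acquires a singular point over $\overline{\mathbb F}_p$ if and only if $p \mid \Delta_D$; for each of the three model classes (conic, hyperelliptic, smooth plane quartic) this equivalence is classical, so the real content lies in the factorizations and in covering the curve by charts so that no singularity at infinity is overlooked.

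For the conic $D = 21$, I take $\overline{W}_{21}$ to be the closure of $g_{21}(x,y) = 0$ in $\pp^2$. For $p \neq 2$ this conic is smooth over $\overline{\mathbb F}_p$ precisely when the associated symmetric $3 \times 3$ matrix is nondegenerate mod $p$, so I set $\Delta_{21}$ equal to its determinant; the prime $p = 2$ is checked directly. Factoring $\Delta_{21}$ produces the primes listed for $D = 21$.

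For $D \in \{44, 53, 61\}$ the model is hyperelliptic, of the form $y^2 + h(x)y = f(x)$, and I cover $\overline{W}_D$ by the two affine charts of Equation \ref{eqn:genustwoprojmodel} (for the genus-one curve $D = 44$ the standard Weierstrass model already has a smooth point at infinity, so one chart suffices). On a chart, completing the square over $\zz[1/2]$ shows that singular reduction at $p \neq 2$ is equivalent to a repeated root of $f + h^2/4$, i.e. to $p \mid \Disc\!\big(f + h^2/4\big)$. The prime $p = 2$ must be handled separately, since in characteristic two $\partial_y\big(y^2 + h y - f\big)$ vanishes identically and the affine curve is singular at every point where $h'y = f'$ is solvable. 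I would compute the chart discriminants, append $2$ when the characteristic-two analysis exhibits a singular point, and take the union over the two charts; the resulting prime sets agree with Table \ref{tab:singularprimes}.

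The genus-three cases $D \in \{56, 60\}$ are the main obstacle. Here $\overline{W}_D$ is the smooth plane quartic $g_D(x,y) = 0$; I would homogenize to $\widetilde g_D(x,y,z)$ and determine, for each $p$, whether the ideal $\big(\widetilde g_D,\, \partial_x \widetilde g_D,\, \partial_y \widetilde g_D,\, \partial_z \widetilde g_D\big)$ has a common zero in $\pp^2$ over $\overline{\mathbb F}_p$, working in all three standard charts so that singularities at infinity are included. Concretely I would extract, by Gr\"obner-basis elimination (or iterated resultants), the integer generator $\Delta_D$ of the appropriate elimination ideal---the discriminant of the plane quartic---and factor it. The hard part is purely computational and bookkeeping: the quartic discriminant is a large integer, the elimination must be organized so that it neither loses primes nor introduces spurious ones (in particular, saturating away components supported entirely in the coordinate hyperplanes), and at each surviving prime one must confirm the common zero is a genuine point of the reduced curve rather than an artifact of the elimination. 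Factoring $\Delta_{56}$ and $\Delta_{60}$ yields the prime sets recorded in Table \ref{tab:singularprimes}, completing the proof.
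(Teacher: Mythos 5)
Your proposal is correct and follows essentially the same route as the paper: discriminants of the (completed-square) defining polynomials for the conic and hyperelliptic cases, and integer elimination ideals of the Jacobian ideal over the three standard charts, with explicit verification at each surviving prime, for the plane quartics $D\in\{56,60\}$. The only cosmetic differences are that the paper absorbs the prime $2$ into $\Disc(h^2-4f)$ rather than treating it by a separate characteristic-two analysis, and uses that single discriminant criterion for the conic $D=21$ as well.
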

\begin{proof}
For a hyperelliptic curve or conic birational to the plane curve
defined by a polynomial of the form $y^2+h(x)y+f(x) \in \zz[x,y]$, it
is standard to show that the primes of singular reduction are
precisely the primes dividing the discriminant of $h(x)^2-4f(x)$.
From this we easily verify that the primes listed in Table
\ref{tab:singularprimes} are the primes of singular reduction for
$\overline{W}_{21}$, $\overline{W}_{44}$, $\overline{W}_{53}$ and
$\overline{W}_{61}$.

Now set $D = 56$ or $60$ so that $\overline{W}_D$ is a smooth plane
quartic and let $g_D^h \in \zz[X,Y,Z]$ be the homogeneous, degree four
polynomial with $g_D^h(x,y,1) = g_D(x,y)$.  Also set $g_1(x,y) =
g_D^h(x,y,1)$, $g_2(x,y) = g_D^h(x,1,y)$ and $g_3(x,y) = g_D^h(1,x,y)$
so that $\overline{W}_D=C_1 \cup C_2 \cup C_3$ with $C_k$ the plane
curve defined by $g_k$.  For each of the primes listed next to $D$ in
Table \ref{tab:singularprimes}, we are able to find a simultaneous
solution to $g_k=0$, $\partial g_k/\partial x=0$ and $\partial
g_k/\partial y=0$ with coordinates in the finite field with $p$
elements for some $k$.  We conclude that each of these primes is a
prime of singular reduction for $\overline{W}_D$.  To show that there
are no other primes of singular reduction for $\overline{W}_D$, we
consider the elimination ideals
\[ E_k = I_k \cap \zz \mbox{ where } I_k = \left( g_k, \partial g_k/\partial x, \partial g_k / \partial y \right). \]
Elimination ideals can be computed using Gr\"obner bases and it is
easy to compute $E_k$ in \texttt{Magma}.  Clearly, if $p$ is a prime of
singular reduction for the affine curve $C_k$, then the ideal
generated by $p$ divides $E_k$.  The primes listed in Table
\ref{tab:singularprimes} are precisely those dividing $E_1 \cdot E_2
\cdot E_3$ and contain all of the primes of singular reduction for
$\overline{W}_D$.
\end{proof}
\noindent Theorem \ref{thm:singularprimes} about the primes of bad
reduction for certain Weierstrass curves is a corollary of Theorem
\ref{thm:singularprimelist}.
\begin{proof}[Proof of Theorem \ref{thm:singularprimes}]
The set primes of bad reduction for $\overline{W}_D$ is contained in
the set of primes of singular reduction for our biregular model of
$\overline{W}_D$.  By inspecting Table \ref{tab:singularprimes}, we
see that each prime of singular reduction for our model of
$\overline{W}_D$ divides the quantity $N(D)$ defined in Equation
\ref{eqn:ND}.
\end{proof}

\paragraph{Singular primes for genus zero Weierstrass curves.}  
We now turn to the Weierstrass curves $\overline{W}_D$ biregular to
the projective $t$-line $\pp^1$ over $\qq$.  In Section
\ref{sec:introduction}, we defined the {\em cuspidal polynomial} for
these curves to be the monic polynomial $c_D(t)$ vanishing simply at
the cusps of $\overline{W}_D$ in the affine $t$-line and non-zero
elsewhere.

As we described in Section \ref{sec:cuspspin}, we can locate the cusps
and compute $c_D(t)$ in each of these examples by determining the
poles of the algebraic function $I_2^5/I_{10}$ on $\overline{W}_D$.
We list the polynomials $c_D(t)$ along with their discriminants in
Table \ref{tab:cusppolys}, allowing us to prove Theorem
\ref{thm:cusppoly}.
\begin{table}
\begin{tabular}{ccc}
\toprule \als
$D$ & Cuspidal polynomial $c_D(t)$ & Discriminant of $c_D(t)$ \\ \als
\midrule \als
$5$ & $t-4$ & 1 \\ \als
$8$ & $t(t+1)$ & 1 \\ \als
$12$ & $t^2+10t+13$ & $2^4 \cdot 3$ \\ \als
$13$ & $t (t^2-14t-3)$ & $2^4 \cdot 3^2 \cdot 13$ \\ \als
$24$ & $t(t-16)(t^2-6)(t^2-24t-72)$ & $2^{36} \cdot 3^{14} \cdot 5^{14}$ \\ \als
$28$ & $(t^2 - 24 t - 423)(t^2 - 63)(t^2 + 14t + 21)$ & $2^{30} \cdot 3^{38} \cdot 7^7$ \\ \als
$29$ & $t (t^2-174t+145)(t^2+145 t-3625)$ & $2^{10} \cdot 5^{18} \cdot 7^8 \cdot 29^{10}$ \\ \als
$37$ & $\begin{array}{l} (t^2 - 2368)(t^2 - 1332)(t^2 + 74t + 1221) \hspace{0.5in}\\ 
\multicolumn{1}{r}{(t^3 + 51t^2 - 2220t - 114108)}\end{array}$ & $2^{60}\cdot 3^{23} \cdot 7^{32} \cdot 37^{28}$ \\ \als
$40$ & $\begin{array}{l} t(t + 81)(t^2 + 110t + 2025) \hspace{1in} \\ \multicolumn{1}{c}{(t^2 + 270t - 10935)(t^2 + 630t + 18225)} \\ \multicolumn{1}{r}{(t^3 + 351t^2 + 10935t + 164025)} \end{array}$ & $2^{168} \cdot 3^{267} \cdot 5^{66}$ \\ \als
\bottomrule
\end{tabular}
\sfcaption{\label{tab:cusppolys} For $\overline{W}_D$ birational to
  the projective $t$-line over $\qq$, the cuspidal polynomial $c_D(t)$
  is the polynomial vanishing simply at cusps of $\overline{W}_D$ in
  the finite $t$-line and nowhere else zero.}
\end{table}
\begin{proof}[Proof of Theorem \ref{thm:cusppoly}]
The polynomial $c_D(t)$ listed in Table \ref{tab:cusppolys} is
obviously in $\zz[t]$ and each of the primes dividing the discriminant
of $c_D(t)$ divides the quantity $N(D)$ defined in Equation
\ref{eqn:ND}.
\end{proof}

\paragraph{A Weierstrass elliptic curve.}  
The Weierstrass curve $\overline{W}_{44}$ is the only Weierstrass
curve associated to a fundamental discriminant and birational to an
elliptic curve over $\qq$.  From our explicit Weierstrass model for
$\overline{W}_{44}$, it is standard to compute various arithmetic
invariants and easy to do so in \texttt{Magma} or \texttt{Sage} (also
cf. \cite[\href{http://www.lmfdb.org/EllipticCurve/Q/880.i2}{Elliptic
    Curve 880.i2}]{lmfdb:880i2}).  We collect these facts about
$\overline{W}_{44}$ in the following proposition.
\begin{prop}
\label{prop:w44arithmeticinvariants}
The Weierstrass curve $\overline{W}_{44}$ has $j$-invariant
$j(\overline{W}_{44}) = 479^3/(11 \cdot 2^5 \cdot 5^5)$, conductor
$N\left( \overline{W}_{44} \right) = 880$, endomorphism ring
$\End\left(\overline{W}_{44}\right)$ isomorphic to $\zz$ and infinite
cyclic Mordell-Weil group $\overline{W}_{44}\left( \qq \right)$
generated by $(x,y)=(26,160)$.
\end{prop}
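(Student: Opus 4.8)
The plan is to work directly with the Weierstrass equation
\[ E : y^2 = x^3 + x^2 + 160 x + 3188 \]
coming from the model $g_{44}(x,y)=0$ of Table \ref{tab:wpqmodels}, and to read off each invariant from this single equation. First I would record the standard quantities $b_2 = 4$, $b_4 = 320$, $b_6 = 12752$, $b_8 = -12848$, and then compute
\[ c_4 = b_2^2 - 24 b_4 = -2^4\cdot 479, \qquad \Delta = -b_2^2 b_8 - 8 b_4^3 - 27 b_6^2 + 9 b_2 b_4 b_6 = -2^{17}\cdot 5^5\cdot 11. \]
Since $479$ is prime and coprime to $\Delta$, the $j$-invariant is $j = c_4^3/\Delta = 479^3/(2^5\cdot 5^5\cdot 11)$, exactly as claimed. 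I would also check by direct substitution that $P=(26,160)$ lies on $E$ (indeed $26^3+26^2+160\cdot 26+3188 = 160^2$). This paragraph is entirely mechanical.

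For the conductor, the primes of bad reduction are precisely those dividing $\Delta$, namely $2$, $5$ and $11$. At $5$ and $11$ one has $v_p(c_4)=0$, so the reduction is multiplicative and contributes a single factor of $p$; at $2$ one finds, by running Tate's algorithm, that the given model is minimal and the reduction is additive with conductor exponent $4$. This yields $N(\overline{W}_{44}) = 2^4\cdot 5\cdot 11 = 880$. The Tate's-algorithm step at $p=2$ is the one genuinely delicate point in the conductor computation and is most safely delegated to a standard implementation.

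For the endomorphism ring, I would invoke the fact that an elliptic curve over $\qq$ has complex multiplication only if its $j$-invariant is an integer (one of the thirteen rational CM values, all algebraic integers). Since $j = 479^3/(2^5\cdot 5^5\cdot 11)$ has denominator different from $1$, it is not an integer, so $E$ admits no complex multiplication and $\End(\overline{W}_{44})\cong\zz$.

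The substantive part is the Mordell--Weil computation, and this is where I expect the main obstacle. For the torsion I would reduce modulo two primes of good reduction and use injectivity of the prime-to-$p$ torsion under reduction: counting points gives $\#E(\mathbb F_3)=3$ and $\#E(\mathbb F_7)=11$, so the prime-to-$3$ torsion is trivial (forcing $E(\qq)_{\mathrm{tors}}$ to be a $3$-group), while this $3$-group injects into $E(\mathbb F_7)$ of order $11$ and hence is trivial. For the rank I would carry out a $2$-descent, bounding the $2$-Selmer group and exhibiting enough independent points; this should give $\operatorname{rank} E(\qq)=1$ and is the hard step. Granting rank $1$ and trivial torsion, $E(\qq)\cong\zz$, and $P=(26,160)$ has infinite order. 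Finally, to see that $P$ generates rather than being a proper multiple, I would compute its canonical height $\hat h(P)$ and apply an explicit naive-versus-canonical height comparison (for instance the Cremona--Prickett--Siksek index bound) together with a short search, ruling out $P=nQ$ for $n\geq 2$. This confirms that $\overline{W}_{44}(\qq)$ is infinite cyclic generated by $(26,160)$.
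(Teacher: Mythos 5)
Your proposal is correct and follows essentially the same route as the paper, which simply computes these standard invariants from the explicit model $g_{44}(x,y)=0$ using {\sf MAGMA}/{\sf Sage} and cites the LMFDB entry for the curve of conductor $880$; your computations of $c_4=-2^4\cdot 479$, $\Delta=-2^{17}\cdot 5^5\cdot 11$, the torsion bound via reduction modulo $3$ and $7$, and the CM exclusion via non-integrality of $j$ all check out. The only caveat is that, like the paper, you ultimately delegate the two genuinely nontrivial steps (Tate's algorithm at $p=2$ and the $2$-descent establishing rank one) to standard implementations rather than carrying them out, so the proof is no more self-contained than the original.
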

\begin{rmk}
We have numerical evidence, obtained using the functions related to
analytic Jacobians in \texttt{Magma}, that the endomorphism rings of
$\jac\left( \overline{W}_{53} \right)$ and $\jac\left(
\overline{W}_{61} \right)$ are also isomorphic to $\zz$.
\end{rmk}
Our identification of $\overline{W}_{44}$ with an elliptic curve turns
$\overline{W}_{44}$ into a group.  We will call the subgroup of
$\overline{W}_{44}$ generated by cusps the {\em cuspidal subgroup}.
By the method described in Section \ref{sec:cuspspin}, we can locate
the cusps on $\overline{W}_{44}$ and prove the following proposition.
\begin{prop}
\label{prop:cuspidalsubgroup44}
The cuspidal subgroup of $\overline{W}_{44}$ is freely generated by
\[  P_1 = \left(\frac{-38-48\sqrt{11}}{25},\frac{-1584+1936\sqrt{11}}{125}\right) \mbox{ and } P_2 = \left( 2+4\sqrt{11},44+16\sqrt{11} \right). \]
\end{prop}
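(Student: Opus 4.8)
The plan is to locate the nine cusps of $\overline{W}_{44}$ explicitly as points on the elliptic curve $E : y^2 = x^3+x^2+160x+3188$ (the biregular model $g_{44}(x,y)=0$ from Table \ref{tab:wpqmodels}) and then, using the group law on $E$, to compute the subgroup they generate and compare it with $\zz P_1+\zz P_2$.

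First I would enumerate the cusps. By McMullen's theorem the cusps of $W_{44}$ are in bijection with the splitting prototypes of discriminant $44$, and one checks directly that there are nine such prototypes. Following the method of Section \ref{sec:cuspspin}, I would locate the corresponding points on $\overline{W}_{44}=\overline{G}_{44}$ by determining the poles of the algebraic function $I_2^5/I_{10}$ (Equation \ref{eqn:I2I10}) pulled back along the parametrization $a_{44}:\overline{G}_{44}\to V$; to match each pole to its prototype one compares $IC$ at the cusp with the stable-limit invariants $IC(p)$ furnished by Proposition \ref{prop:ICprot}. This produces the nine cusps as explicit points $c_1,\dots,c_9\in E$, whose coordinates lie in $\qq(\sqrt{11})$.

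Next I would compute the subgroup $\Lambda \subset E\bigl(\qq(\sqrt{11})\bigr)$ generated by $c_1,\dots,c_9$ under the group law (origin at infinity, as in Proposition \ref{prop:w44arithmeticinvariants}). Concretely I would (i) express each cusp $c_i$ as an integer combination $m_iP_1+n_iP_2$ by explicit point addition, proving $\Lambda\subseteq \zz P_1+\zz P_2$, and (ii) exhibit $P_1$ and $P_2$ themselves as integer combinations of the $c_i$, proving the reverse inclusion; together these give $\Lambda=\zz P_1+\zz P_2$. To show that $P_1$ and $P_2$ \emph{freely} generate, i.e. that $\Lambda\cong\zz^2$, I would compute the N\'eron--Tate canonical-height pairing matrix of $P_1,P_2$ over $\qq(\sqrt{11})$ and check that its determinant (the regulator) is nonzero; positive-definiteness of the height pairing then forces $P_1,P_2$ to be $\zz$-linearly independent and in particular non-torsion, whence $\langle P_1,P_2\rangle\cong\zz^2$. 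All of these are finite, rigorously verifiable computations in {\sf MAGMA} once the cusps are in hand, and the resulting rank of $\Lambda$ is consistent with Theorem \ref{thm:w44mw}.

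The main obstacle I anticipate is the bookkeeping in the first step, namely matching each pole of $I_2^5/I_{10}$ to its splitting prototype: as noted in the remark following the proof of Theorem \ref{thm:wDers}, the stable limit does not always determine the prototype uniquely (the coordinate $a$ does not appear in $IC(p)$), so some care is needed to enumerate and identify all nine cusps correctly and to be sure none is overlooked. Once the cusps are pinned down, certifying that $\Lambda=\zz P_1\oplus\zz P_2$ reduces to the explicit group-law expressions above together with the non-degeneracy of the height pairing.
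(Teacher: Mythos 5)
Your proposal is correct, and its first half (locating the nine cusps via the poles of $I_2^5/I_{10}$, matching them to splitting prototypes through Proposition \ref{prop:ICprot}, and writing each cusp as an integer combination of $P_1$ and $P_2$ --- note that $P_1$ and $P_2$ are themselves cusps, so the reverse inclusion is automatic) is exactly what the paper does via Table \ref{tab:w44cusps}. Where you diverge is the proof that $\langle P_1,P_2\rangle\cong\zz^2$. You propose certifying that the N\'eron--Tate regulator of $P_1,P_2$ over $\qq(\sqrt{11})$ is nonzero; this works, since positive definiteness of the canonical height pairing modulo torsion turns a nonvanishing Gram determinant into $\zz$-linear independence, but it requires a certified numerical computation of canonical heights (one must bound the regulator away from zero, not merely observe that a floating-point value is small but nonzero). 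The paper instead argues by descent to $\qq$: it observes that $P_1-P_2$ is $\qq$-rational, invokes Proposition \ref{prop:w44arithmeticinvariants} ($\overline{W}_{44}(\qq)\cong\zz$, so $P_1-P_2$ is non-torsion), and then uses Kamienny's uniform bound on torsion of elliptic curves over quadratic fields to reduce the independence of $P_1-P_2$ and $P_2$ to the finite, exact-arithmetic check that $nP_2$ is not $\qq$-rational for $n\leq 18$. The paper's route trades your analytic height computation for prior knowledge of the Mordell--Weil group over $\qq$ plus a deep uniform-boundedness theorem; yours is more self-contained over $\qq(\sqrt{11})$ and generalizes immediately to curves where the rational Mordell--Weil group is not so simple.
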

\begin{proof}
The second column of Table \ref{tab:w44cusps} identifies the locations
of the cusps for $\overline{W}_{44}$ in our elliptic curve model
$g_{44}(x,y)=0$ and the fourth column asserts relations among these
points in the group law (e.g. the cusp at $Q = (-9,10\sqrt{11})$ is
equal to $6P_1-9P_2$).  It is standard to verify these relations and
easy to do so in \texttt{Magma}.  We conclude that the cuspidal subgroup
is generated by $P_1$ and $P_2$.

To show that the cuspidal subgroup is freely generated by $P_1$ and
$P_2$, we first check that $P_1-P_2$ is a $\qq$-rational point.  By
Proposition \ref{prop:w44arithmeticinvariants}, the difference
$P_1-P_2$ generates a free subgroup of $\overline{W}_{44}$.  Next, we
check that $n \cdot P_2$ is not $\qq$-rational for any $n \leq 18$.
By Kamienny's bound on the torsion order of points on elliptic curves
over quadratic fields \cite{kamienny:torsion}, we conclude that $P_1$
and $P_1-P_2$ generate a free subgroup of $\overline{W}_{44}$ and the
proposition follows.
\end{proof}
Theorem \ref{thm:w44mw} concerning the subgroup of $\pic^0\left(
\overline{W}_{44} \right)$ generated by pairwise cusp differences is
an immediate corollary.
\begin{proof}[Proof of Theorem \ref{thm:w44mw}]
Since the identity $(x,y) = (\infty,\infty)$ in $\overline{W}_{44}$ is
a cusp, the cuspidal subgroup is naturally isomorphic to the subgroup
of $\pic^0\left( \overline{W}_{44} \right)$ generated by pairwise cusp
differences.  By Proposition \ref{prop:cuspidalsubgroup44}, the
cuspidal group is isomorphic to $\zz^2$.
\end{proof}
\begin{center}
\begin{table}
\begin{tabular}{cccc}
\toprule
\als
Prototype & $(x,y)$ & $(r,s)$ & Mordell-Weil \\
\als
\midrule
$(0,11,1,0)$ & $(\infty,\infty)$ & $(-1,0)$ & $(0,0)$ \\ \als
$(0,7,1,4)$  & $\left(\frac{1}{25}(-38-48\sqrt{11}),\frac{1}{125}(-1584+1936\sqrt{11})\right)$ & $(-1,0)$ & $(1,0)$  \\ \als
$(0,7,1,-4)$ & $\left(\frac{1}{25}(-38+48\sqrt{11}),\frac{1}{125}(-1584-1936\sqrt{11}) \right)$ & $(-1,0)$ & $(5,-6)$ \\ \als
$(0,5,2,-2)$ & $\left( 2+4\sqrt{11},44+16\sqrt{11} \right)$ & $(1,0)$ & $(0,1)$ \\ \als
$(0,5,2,2)$ & $\left( 2-4\sqrt{11},44-16\sqrt{11} \right)$ & $(1,0)$ & $(4,-5)$ \\ \als
$(0,2,1,-6)$ & $(-9,10\sqrt{11})$ & $\left( \frac{1}{15}(2-2\sqrt{11}),0 \right)$ & $(6,-9)$ \\ \als
$(0,1,2,-6)$ & $(-9,-10\sqrt{11})$ & $\left( \frac{1}{15}(2+2\sqrt{11}),0 \right)$ & $(-6,9)$ \\ \als
$(0,10,1,-2)$ & $\left( 66+20\sqrt{11},740+240\sqrt{11} \right)$ & $(-1,0)$ & $(-2,4)$ \\ \als
$(0,10,1,2)$ & $\left( 66-20\sqrt{11},740-240\sqrt{11} \right)$ & $(-1,0)$ & $(6,-8)$ \\ \als
\bottomrule
\end{tabular}
\sfcaption{\label{tab:w44cusps} For each of the nine splitting prototypes of discriminant $44$, we list the $(x,y)$ coordinates in the Weierstrass model $g_{44}(x,y)=0$, the $(r,s)$-coordinates in the $w_{44}(r,s) = 0$ model and the Mordell-Weil coordinates in the cuspidal subgroup of $\overline{W}_{44}$ for the corresponding cusp.}
\end{table}
\end{center}
\unskip
\paragraph{Canonical divisors supported at cusps.}  
A genus two curve with Weierstrass model given by $y^2 + h(x) y +
f(x)=0$ admits a hyperelliptic involution $\eta$ given by the formula
$\eta(x,y)=(x,-h(x)-y)$.  The orbits of $\eta$ are intersections with
vertical lines $x=c$ and canonical divisors.  By locating the cusps on
$\overline{W}_{53}$ as described in Section \ref{sec:cuspspin}, we
find two canonical divisors supported at cusps.
\begin{prop}
\label{prop:w53canonicaldivisors}
The holomorphic one-forms on $\overline{W}_{53}$ given by
\[ \omega_1 = \left(2x+7-2\sqrt{53}\right) dx/y \mbox{ and }\omega_2 = \left(2x+7+2\sqrt{53}\right) dx/y\]
vanish only at cusps.
\end{prop}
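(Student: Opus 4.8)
The plan is to recognize $\omega_1$ and $\omega_2$ as the holomorphic one-forms whose divisors are the two vertical fibers of the hyperelliptic map whose points are all cusps, and then to exhibit those cusps explicitly. First I would put $g_{53}(x,y)=0$ in the shape $y^2+h(x)y+f(x)=0$ with $h(x)=1+x^2$ and $f(x)=-(9x^6+401x^5+\cdots+7711875)$, and complete the square via $\mathcal Y = 2y+1+x^2$, so that $\overline{W}_{53}$ is the smooth projective model of the genus two curve $\mathcal Y^2 = h(x)^2-4f(x)$ (a sextic, hence two points over $x=\infty$). Its holomorphic one-forms are spanned by $dx/\mathcal Y$ and $x\,dx/\mathcal Y$, and the forms in the statement are, up to scale, $\omega_i = 2(x-c_i)\,dx/\mathcal Y$ with $c_1 = -\tfrac{7}{2}+\sqrt{53}$ and $c_2 = -\tfrac{7}{2}-\sqrt{53}$.

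Next I would reduce the proposition to a statement about cusps. By the general fact recalled just before the proposition, the effective canonical divisors on a genus two curve are exactly the orbits of the hyperelliptic involution, i.e. the fibers of $x:\overline{W}_{53}\to\pp^1$. A short local computation pins down the divisor of $\omega_i$: the form $dx/\mathcal Y$ has a simple zero at each of the two points over $x=\infty$, while the factor $x-c_i$ has a simple pole there and simple zeros at the two points over $x=c_i$ (assuming $c_i$ is not a branch point); hence $(x-c_i)\,dx/\mathcal Y$ is holomorphic with divisor exactly the fiber $x^{-1}(c_i)$, of degree $2g-2=2$. Thus $\omega_i$ vanishes precisely along $x^{-1}(c_i)$, and the proposition is equivalent to the assertion that both points of $\overline{W}_{53}$ over $x=c_1$ and both over $x=c_2$ are cusps.

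I would then locate the cusps using the method of Section \ref{sec:cuspspin}: the cusps of $\overline{W}_{53}$ are the poles of the function $I_2^5/I_{10}$ on $\overline{W}_{53}$, pulled back through the parametrization $a_{53}:G_{53}\to V$ of the proof of Theorem \ref{thm:gDxy}, and their stable limits are determined by the splitting-prototype invariants of Proposition \ref{prop:ICprot}. Computing these poles in the biregular model $g_{53}(x,y)=0$ yields the cusps with their $(x,y)$-coordinates. I would read off that the cusps occur over $x=-\tfrac{7}{2}\pm\sqrt{53}$ in Galois-conjugate pairs, and check by solving $g_{53}(c_i,y)=0$ that the two roots $y$ over each $c_i$ are exactly the two cusps found there, so that each $x^{-1}(c_i)$ is a sum of two cusps. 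Combining this with the divisor computation above gives $\mathrm{div}(\omega_i)=x^{-1}(c_i)$ supported entirely at cusps, proving that $\omega_1$ and $\omega_2$ vanish only at cusps.

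The main obstacle is the explicit, correct location of all cusps in the chosen $(x,y)$-model and the verification that exactly two of them lie over each of $c_1,c_2$: this requires running the cusp-finding computation over $\qq(\sqrt{53})$, tracking conjugate pairs, and confirming the fiber structure. A secondary point to verify is that $c_1,c_2$ are not branch points of $x$, so that each fiber is a pair of distinct cusps rather than a doubled Weierstrass point; were either a branch point, one would instead note that the single point over it is a cusp of multiplicity two, which is still supported at cusps and leaves the conclusion intact.
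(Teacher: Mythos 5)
Your proposal is correct and is essentially the paper's own argument: the paper likewise notes that the fibers of $x$ (orbits of the hyperelliptic involution $\eta(x,y)=(x,-h(x)-y)$) are the effective canonical divisors, and then locates the cusps by the pole-finding method of Section \ref{sec:cuspspin} to verify that the two fibers over $x=-\tfrac{7}{2}\pm\sqrt{53}$ consist entirely of cusps. The only point worth flagging is notational: $dx/y$ in the statement must be read as the holomorphic differential $dx/(2y+h(x))$ for the model $y^2+h(x)y+f(x)=0$, which is exactly the reading you adopt.
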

\noindent By contrast, after computing the cusp locations on
$\overline{W}_{61}$, we find that there are no such forms on
$\overline{W}_{61}$.
\begin{prop}
\label{prop:w61canonicaldivisors}
There are no holomorphic one-forms on $\overline{W}_{61}$ which vanish
only at cusps.
\end{prop}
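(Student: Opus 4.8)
The plan is to reduce the statement to the special geometry of genus two curves and then to a finite check on the explicit cusp locations. Recall from the discussion preceding Proposition \ref{prop:w53canonicaldivisors} that the curve $\overline{W}_{61}$, with its hyperelliptic model $g_{61}(x,y)=0$ of the form $y^2+h(x)y+f(x)$ where $h(x)=x^2+x+1$, carries the hyperelliptic involution $\eta(x,y)=(x,-h(x)-y)$, and that the effective canonical divisors on $\overline{W}_{61}$ are exactly the orbits of $\eta$, i.e.\ the fibers of the degree two map $\pi:\overline{W}_{61}\to\pp^1$, $(x,y)\mapsto x$. Since $\dim|K|=g-1=1$, every nonzero holomorphic one-form $\omega$ on $\overline{W}_{61}$ has $\operatorname{div}(\omega)$ equal to one such fiber, and conversely each fiber is realized by a one-form unique up to scale. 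Hence a holomorphic one-form vanishes only at cusps if and only if some fiber $\pi^{-1}(x_0)$ is supported entirely at cusps.

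First I would locate all cusps of $\overline{W}_{61}$ in the model $g_{61}(x,y)=0$ by the method of Section \ref{sec:cuspspin}, namely by determining the poles of the function $c\mapsto I_2(Y(a_{61}(c)))^5/I_{10}(Y(a_{61}(c)))$ along the parametrization $h_{61}:\overline{G}_{61}\to\overline{W}_{61}$, and then recording the $(x,y)$-coordinates of each cusp together with the behavior of the two points at infinity. These coordinates are defined over $\qq(\sqrt{61})$ and occur in Galois-conjugate pairs. Matching each cusp to a splitting prototype of discriminant $61$ via $IC(p)$ (Proposition \ref{prop:ICprot}) confirms that the list is complete.

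With the explicit cusp list in hand, the proposition becomes a finite verification. A fiber of $\pi$ is either a pair $\{P,\eta(P)\}$ of distinct points over a common finite $x$-value, or the divisor $2W$ at a Weierstrass point $W$ (a point fixed by $\eta$, where $2y+h(x)=0$), or the pair of points over $x=\infty$. I would therefore check directly from the coordinates that: (i) no two distinct cusps share the same $x$-coordinate, so no fiber of the form $\{P,\eta(P)\}$ consists of two cusps; (ii) no cusp is a Weierstrass point, so that no divisor $2W$ is supported at a cusp; and (iii) the two points over $x=\infty$ are not both cusps. Each of these conditions is immediate from the coordinates and is routine to confirm in {\sf MAGMA}. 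Since no fiber of $\pi$ is supported entirely at cusps, no holomorphic one-form on $\overline{W}_{61}$ vanishes only at cusps.

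The main obstacle is bookkeeping rather than anything conceptual: one must be certain the enumeration of cusps is complete and must correctly handle the two points over $x=\infty$ and any cusp that could sit at a branch point. The only way an all-cusp fiber can arise is for two cusps to be exchanged by $\eta$, i.e.\ to lie over a common $x$-value (which need not be rational), exactly as occurs for $\overline{W}_{53}$, where a fiber over an $x$-value in $\qq(\sqrt{53})$ consists of two cusps and yields the forms $\omega_1,\omega_2$ of Proposition \ref{prop:w53canonicaldivisors}. Verifying that no such coincidence happens for $\overline{W}_{61}$ and that no cusp is a Weierstrass point is precisely what distinguishes this curve from the genus two curve $\overline{W}_{53}$.
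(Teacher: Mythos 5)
Your proposal is correct and follows essentially the same route as the paper: the text preceding Proposition \ref{prop:w53canonicaldivisors} already identifies the effective canonical divisors of a genus two Weierstrass model with the orbits of the hyperelliptic involution (the fibers over vertical lines $x=c$), and the paper's proof of the $D=61$ case is exactly the finite check that, after locating the cusps by the $I_2^5/I_{10}$ method of Section \ref{sec:cuspspin}, no such fiber is supported entirely at cusps. Your write-up merely makes explicit the case analysis (shared $x$-coordinates, Weierstrass points, the two points over $x=\infty$) that the paper leaves to the computation.
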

\noindent For both $\overline{W}_{53}$ and $\overline{W}_{61}$, the
hyperelliptic involution $\eta$ does not preserve the set of cusps,
yielding our next proposition.
\begin{prop}
\label{prop:genustwohypinvolution}
For $D \in \left\{ 53,61 \right\}$, the hyperelliptic involution on
$\overline{W}_D$ does not restrict a hyperbolic isometry of $W_D$.
\end{prop}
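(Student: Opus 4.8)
The plan is to reduce the statement to the single fact, recorded just before the proposition, that the hyperelliptic involution $\eta$ does not permute the cusps of $W_D$. The reduction is elementary: $\eta$ is an automorphism of the smooth projective curve $\overline{W}_D$, and $W_D$ is identified with the complement in $\overline{W}_D$ of the finite set of cusps (passing to $\overline{W}_D$ only smooths orbifold points and fills in cusps). For $\eta$ to restrict to a self-map of $W_D$ — a prerequisite for it to restrict to any isometry of $W_D$ — it must satisfy $\eta(W_D)=W_D$, and since $\eta$ is a bijection of $\overline{W}_D$ this is equivalent to $\eta$ preserving the cusp set. (This is also forced on geometric grounds: an isometry of the finite-volume hyperbolic surface $W_D=\hh/\Gamma$ lifts to an element of the normalizer of $\Gamma$ and hence permutes the conjugacy classes of maximal parabolic subgroups, i.e.\ the cusps.) Thus it suffices to show that $\eta$ does not preserve the cusps, in which case $\eta$ does not even restrict to a self-map of $W_D$, let alone to a hyperbolic isometry.

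First I would locate the cusps in our biregular model $g_D(x,y)=0$ by the method of Section~\ref{sec:cuspspin}, as the poles of the pullback of the algebraic function $I_2^5/I_{10}$; this gives an explicit finite list of cusp coordinates for $D\in\{53,61\}$. Writing $g_D(x,y)=y^2+h(x)y+f(x)$, the involution is $\eta(x,y)=(x,-h(x)-y)$, which fixes the $x$-coordinate. Hence $\eta$ preserves the cusp set if and only if that set is a union of fibers of the hyperelliptic projection, i.e.\ if and only if for every cusp $(x_0,y_0)$ the conjugate $(x_0,-h(x_0)-y_0)$ is again a cusp.

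The final step is to verify that this fiber-closure fails, and the two discriminants are handled slightly differently. For $D=61$ I can argue from Proposition~\ref{prop:w61canonicaldivisors} alone: a holomorphic one-form on a genus-two curve vanishes exactly along a single fiber of the hyperelliptic projection, so the absence of any one-form vanishing only at cusps means that no fiber consists entirely of cusps and that no Weierstrass point is a cusp; as $W_{61}$ has at least one cusp, that cusp's hyperelliptic conjugate is not a cusp, and $\eta$ moves it off the cusp set. For $D=53$ the forms in Proposition~\ref{prop:w53canonicaldivisors} only exhibit two $\eta$-orbits of cusps, which is not the whole story: $W_{53}$ has seven cusps (one per splitting prototype of discriminant $53$), so at least three lie outside those two orbits. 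I would finish by reading off the explicit cusp coordinates and checking directly that some remaining cusp has a hyperelliptic conjugate which is not a cusp. This direct check is the point that most requires care, since a priori the leftover cusps could pair into further $\eta$-orbits or sit at Weierstrass points; the odd cusp count is suggestive but not by itself conclusive. Once such a cusp is produced, $\eta$ fails to preserve the cusps, and by the reduction above it does not restrict to a hyperbolic isometry of $W_D$.
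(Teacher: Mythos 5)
Your proposal is correct and takes essentially the same approach as the paper: the paper's entire argument is the observation, drawn from the explicitly computed cusp locations, that $\eta$ does not preserve the set of cusps, which is precisely the reduction and verification you describe. Your derivation of the $D=61$ case from Proposition~\ref{prop:w61canonicaldivisors} (no fiber of the hyperelliptic projection consists entirely of cusps, so the conjugate of any cusp is never a cusp) is a slightly cleaner packaging of the same underlying computation, and your caution that for $D=53$ the odd cusp count is suggestive but not conclusive without the explicit coordinate check is well placed, since the paper never asserts that the two forms of Proposition~\ref{prop:w53canonicaldivisors} are the only ones vanishing solely at cusps.
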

Our smooth plane quartic Weierstrass curves---$\overline{W}_{56}$ and
$\overline{W}_{60}$---are canonically embedded in $\pp^2$.  In
particular, intersections with lines are canonical divisors.  By
computing the cusp locations on $\overline{W}_{56}$, we find a
canonical divisor supported at cusps.  In the following propositions,
we let $X$, $Y$ and $Z$ be homogeneous coordinates on the projective
closure of the $(x,y)$-plane, with $x = X/Z$ and $y=Y/Z$
\begin{prop}
\label{prop:w56canonicaldivisors}
The line $Y = 2Z$ meets $\overline{W}_{56}$ at a canonical divisor
supported at cusps.
\end{prop}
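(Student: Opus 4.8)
The plan is to exploit the fact, recalled immediately before this proposition, that $\overline{W}_{56}$ is canonically embedded in $\pp^2$ as the smooth plane quartic $g_{56}(x,y)=0$. On a canonically embedded curve the canonical class is the hyperplane class, so the scheme-theoretic intersection of \emph{any} line with $\overline{W}_{56}$ is automatically a canonical divisor of degree $2g-2=4$. Consequently the whole content of the proposition is to check that the four points (with multiplicity) cut out by the line $Y=2Z$ are cusps of $\overline{W}_{56}$.

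First I would compute the intersection divisor explicitly. Substituting $y=2$ (equivalently $Y=2Z$) into $g_{56}$ and collecting terms gives
\[ g_{56}(x,2) = x^4 - 4x^3 - 30x^2 + 68x + 65 = (x^2 - 6x - 5)(x^2 + 2x - 13), \]
whose four roots $x = 3 \pm \sqrt{14}$ and $x = -1 \pm \sqrt{14}$ yield four distinct points $P_1,\dots,P_4$ on $\overline{W}_{56}$. The only point at infinity on the line, namely $(1:0:0)$, does not lie on the quartic (the coefficient of $x^4$ in $g_{56}$ is nonzero), so all four intersection points are affine. A reassuring consistency check is that these points are defined over $\qq(\sqrt{14}) = \qq(\sqrt{56})$, precisely the field over which cusps of $W_{56}$ are known to be defined.

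Next I would verify that each $P_i$ is a cusp by the method of Section \ref{sec:cuspspin}: the cusps of $\overline{W}_{56}$ are exactly the poles of the algebraic function $I_2^5/I_{10}$, where the discriminant $I_{10}$ of the genus two model $f_a$ vanishes while $I_2$ does not. Using the explicit parametrization $a_{56} : G_{56} \to V$ (with $G_{56}$ the curve $g_{56}=0$) recorded in the auxiliary file {\sf cert56.magma}, I would evaluate $I_2^5/I_{10}$ at each $P_i$ and confirm that it has a pole there. For a sharper identification, I would compute the limiting weighted-projective point $IC(Y(a_{56}(P_i))) \in \pp(2,4,6,10)$, which must land in the locus $I_{10}=0$, and match it against the stable limits $IC(p)$ of the ten splitting prototypes $p$ of discriminant $56$ supplied by Proposition \ref{prop:ICprot}, thereby naming each of the four cusps.

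The main obstacle is purely computational: carrying out the evaluation of $a_{56}$ and of the Igusa–Clebsch invariants at the four points $3\pm\sqrt{14},\,-1\pm\sqrt{14}$ and confirming the pole of $I_2^5/I_{10}$ (equivalently the vanishing of $I_{10}$) at each. Because all of this takes place inside the quadratic field $\qq(\sqrt{14})$, the computations are exact and involve no floating-point input, so the verification is fully rigorous. Moreover the Galois symmetry $\sqrt{14}\mapsto-\sqrt{14}$ interchanges $P_1\leftrightarrow P_2$ and $P_3\leftrightarrow P_4$, so in practice it suffices to certify one representative from each conjugate pair.
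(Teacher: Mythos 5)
Your proposal is correct and follows the same route as the paper: the paper likewise invokes the canonical embedding of $\overline{W}_{56}$ as a smooth plane quartic (so that line sections are canonical divisors) and then locates the cusps via the poles of $I_2^5/I_{10}$ computed from the parametrization in the auxiliary files. Your explicit computation $g_{56}(x,2)=x^4-4x^3-30x^2+68x+65=(x^2-6x-5)(x^2+2x-13)$ and the observation that the four roots lie in $\qq(\sqrt{56})$ are accurate and usefully flesh out the verification the paper leaves to the computer files.
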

\noindent On $\overline{W}_{60}$, we find five canonical divisors
supported at cusps.
\begin{prop}
\label{prop:w60canonicaldivisors}
Each of the following five lines 
\begin{equation}
\begin{array}{c}
Y=0, 4X +(6-\sqrt{60}) Y =0, 4X +(6+\sqrt{60})Y = 0,\\ -6X+10X-Z=0 \mbox{ and } 6X+5X+Z =0
\end{array}
\end{equation}
meets $\overline{W}_{60}$ at a canonical divisor supported at cusps.
\end{prop}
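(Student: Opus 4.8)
The plan is to exploit the fact that $\overline{W}_{60}$ is biregular to the smooth plane quartic $g_{60}(x,y)=0$ (Theorem \ref{thm:gDxy}), and hence canonically embedded: for a non-hyperelliptic genus three curve the canonical system is cut out by the lines of $\pp^2$, so every line section is a canonical divisor of degree $2g-2 = 4$. Thus each of the five lines automatically meets $\overline{W}_{60}$ in a canonical divisor, and the entire content of the proposition is the assertion that these five particular divisors are \emph{supported at cusps}. The proof therefore reduces to (i) locating the cusps of $\overline{W}_{60}$ in the $(X:Y:Z)$ model, and (ii) checking that, for each of the five lines, the degree four intersection divisor is concentrated on this cusp set.

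For step (i) I would follow the recipe of Section \ref{sec:cuspspin}. Using the biregular parametrization $h_{60}$ of $\overline{W}_{60}$ built from the proof of Theorem \ref{thm:wDrs} and the biregular models of Section \ref{sec:arithmetic}, the cusps are exactly the points where the algebraic function $I_2^5/I_{10}$ of Equation \ref{eqn:I2I10} has a pole; computing these poles produces a finite list of points together with their $(X:Y:Z)$-coordinates. To be certain the list is complete and correctly identified I would match it against the enumeration of splitting prototypes of discriminant $60$ via the stable-limit formula of Proposition \ref{prop:ICprot}, verifying that $\lim IC(Y_n) = IC(p)$ in the sense of Equation \ref{eqn:ICprot} for the relevant prototype $p$ at each located cusp. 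Because some cusps, and two of the five lines, are defined only over $\qq(\sqrt{60})$, all of this must be carried out over the quadratic field $\qq(\sqrt{60})$ rather than over $\qq$.

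For step (ii) I would homogenize $g_{60}$ to $g_{60}^h(X,Y,Z)$ with $g_{60}^h(x,y,1) = g_{60}(x,y)$, substitute each linear form into $g_{60}^h$ to eliminate one variable, and factor the resulting binary quartic form in the two remaining homogeneous coordinates. Its four roots, counted with multiplicity, are precisely the points of the intersection of the line with $\overline{W}_{60}$, and I would verify root by root that each lies in the cusp set found in step (i), taking care to include the points at infinity where $Z=0$ and to account for tangencies, where a line meets a cusp with intersection multiplicity two or more. Since the intersection points sum with multiplicity to the degree $2g-2 = 4$, the divisor is supported at cusps exactly when every factor corresponds to a cusp, and this is a finite, rigorous computation in $\qq(\sqrt{60})[X,Y,Z]$.

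I expect the main obstacle to be step (i), and specifically the correct and complete identification of the cusps. As noted in the Remark following the proof of Theorem \ref{thm:wDers}, the stable limit $IC(p)$ does not determine the splitting prototype uniquely, since the entry governing the first coordinate of the prototype does not appear in Equation \ref{eqn:ICprot}; consequently some care is needed to match each pole of $I_2^5/I_{10}$ with a genuine prototype and to rule out both spurious and missed cusps. The remaining difficulty in step (ii) is bookkeeping: correctly tracking intersection multiplicities and the behavior at infinity so that the full degree four of each line section is accounted for by cusps, with the tangency cases---where one of the five lines meets $\overline{W}_{60}$ at a cusp with multiplicity at least two---demanding the most attention.
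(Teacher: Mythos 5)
Your proposal is correct and follows essentially the same route as the paper: the paper likewise observes that $\overline{W}_{60}$ is canonically embedded as a plane quartic so that line sections are automatically canonical divisors, locates the cusps by computing the poles of $I_2^5/I_{10}$ as in Section \ref{sec:cuspspin}, and then verifies that each of the five lines meets the curve only at cusps. Your additional cross-check against the splitting prototypes via Proposition \ref{prop:ICprot} is a sensible safeguard but not a departure from the paper's method.
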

Combining the propositions of this paragraph, we can now prove Theorem
\ref{thm:canonicaldivisors}.
\begin{proof}[Proof of Theorem \ref{thm:canonicaldivisors}]
By Propositions \ref{prop:w53canonicaldivisors},
\ref{prop:w56canonicaldivisors} and \ref{prop:w60canonicaldivisors},
each of the curves $\overline{W}_{53}$, $\overline{W}_{56}$ and
$\overline{W}_{60}$ has a canonical divisor supported at cusps.  By
Proposition \ref{prop:w61canonicaldivisors}, the curve
$\overline{W}_{61}$ has no canonical divisor supported at cusps.
\end{proof}

\paragraph{Principal divisors supported at cusps.}  
We now prove the following theorem about principal divisors supported
at cusps on Weierstrass curves.
\begin{prop}
\label{prop:principaldivisors}
Each of the curves $\overline{W}_{44}$, $\overline{W}_{53}$,
$\overline{W}_{57}$, $\overline{W}_{60}$, $\overline{W}_{65}$ and
$\overline{W}_{73}$ has a principal divisor supported at cusps.
\end{prop}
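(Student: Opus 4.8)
The statement is precisely the content of Theorem \ref{thm:principaldivisors}: a principal divisor supported at cusps is by definition $\operatorname{div}(\phi)$ for a nonconstant rational function $\phi$ on $\overline{W}_D$ whose zeros and poles all lie among the cusps, and the restriction of such a $\phi$ to $W_D$ is exactly a nonconstant holomorphic map $W_D \to \cc^*$ (conversely any such map extends to a meromorphic function on $\overline{W}_D$ with zeros and poles only at cusps). The plan is therefore to exhibit, for each $D \in \{44,53,57,60,65,73\}$, one such function $\phi$. In every case I would first locate the cusps of $\overline{W}_D$ by the method of Section \ref{sec:cuspspin}, i.e.\ as the poles on the biregular model $\overline{G}_D$ of the algebraic function $c \mapsto I_2(Y(a_D(c)))^5/I_{10}(Y(a_D(c)))$. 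I emphasize at the outset that existence is a genuine property, not a formality: the degree-zero divisors supported at the cusps form a lattice mapping to $\pic^0(\overline{W}_D)$, and a principal cuspidal divisor exists precisely when this map has a nontrivial kernel. For $\overline{W}_{56}$ and $\overline{W}_{61}$ no such kernel is present, which is why they are absent from the list; so the proof must be constructive.

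For the curves of genus at least two, $\overline{W}_{53}$ and $\overline{W}_{60}$, I would reuse the canonical divisors already produced. By Proposition \ref{prop:w53canonicaldivisors} the curve $\overline{W}_{53}$ carries two holomorphic one-forms $\omega_1,\omega_2$ each vanishing only at cusps, and by Proposition \ref{prop:w60canonicaldivisors} the curve $\overline{W}_{60}$ carries five such forms (the sections cut out by the five dashed lines). In either case, pick two linearly independent such forms; their ratio $\phi=\omega_1/\omega_2$ is a nonconstant rational function with $\operatorname{div}(\phi)=\operatorname{div}(\omega_1)-\operatorname{div}(\omega_2)$, a degree-zero divisor supported entirely at cusps. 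Concretely, on $\overline{W}_{53}$ one may take $\phi=(2x+7-2\sqrt{53})/(2x+7+2\sqrt{53})$, and on $\overline{W}_{60}$ the ratio of any two of the linear forms in Proposition \ref{prop:w60canonicaldivisors}.

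For the curves of genus one, $\overline{W}_{44}$, $\overline{W}_{57}$, $\overline{W}_{65}$ and $\overline{W}_{73}$, each curve (resp.\ each spin component) is an elliptic curve with a chosen origin, so $\pic^0 \cong \overline{W}_D$ and a degree-zero divisor supported at cusps is principal exactly when the corresponding combination of cusps vanishes in the group law. The plan is to locate the cusps on the Weierstrass models $g_D(x,y)=0$ (resp.\ $g_D^\epsilon(x,y)=0$) of Table \ref{tab:wpqmodels} and to find a nontrivial relation $\sum_i n_i P_i = O$ with $\sum_i n_i = 0$ among them. For $\overline{W}_{44}$ this is immediate from Proposition \ref{prop:cuspidalsubgroup44}: the cuspidal subgroup is $\zz^2$ while there are nine cusps, so the kernel of the map from the rank-eight lattice of degree-zero cuspidal divisors to $\pic^0$ has rank six. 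An explicit witness is visible in Table \ref{tab:w44cusps}: the cusps $(-9,10\sqrt{11})$ and $(-9,-10\sqrt{11})$ have Mordell--Weil coordinates $(6,-9)$ and $(-6,9)$ and are therefore inverse points, while the identity is the cusp $(\infty,\infty)$; hence $\operatorname{div}(x+9)=(-9,10\sqrt{11})+(-9,-10\sqrt{11})-2(\infty,\infty)$ is a principal divisor supported at cusps. For $\overline{W}_{57}$, $\overline{W}_{65}$ and $\overline{W}_{73}$ the same scheme applies: after locating the cusps one seeks a group-law relation, which in the simplest case is realized by a pair of distinct cusps sharing an $x$-coordinate (the point at infinity being a cusp), giving $\phi=x-x_0$, and otherwise by a product of such linear functions realizing a more general relation. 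These are finite computations to be carried out in {\sf MAGMA}.

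The main obstacle is exactly the point flagged in the first paragraph, that existence can fail. Thus the heart of the proof is not the construction of $\phi$ once a cuspidal relation is known, but verifying that a nontrivial relation exists at all. For $\overline{W}_{53}$ and $\overline{W}_{60}$ this verification is supplied for free by the two independent cusp-supported one-forms of Propositions \ref{prop:w53canonicaldivisors} and \ref{prop:w60canonicaldivisors}. For the genus-one curves it reduces to the rank computation of the cuspidal subgroup inside $\pic^0$: one must show that the images in the group law of the explicitly located cusps are $\zz$-linearly dependent beyond the degree constraint. Because the number of cusps comfortably exceeds the observed rank in each of the four cases, a relation is found and a single explicit function witnessing it completes the argument.
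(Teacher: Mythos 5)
Your proposal is correct and follows essentially the same route as the paper: for $\overline{W}_{53}$ and $\overline{W}_{60}$ it takes the ratio of two cusp-supported holomorphic one-forms from Propositions \ref{prop:w53canonicaldivisors} and \ref{prop:w60canonicaldivisors}, and for the genus-one curves it locates the cusps (noting that the identity is a cusp) and exhibits a group-law relation among them, exactly as the paper does via Table \ref{tab:w44cusps} and the {\sf certD.magma} computations. Your explicit witness $\operatorname{div}(x+9)$ on $\overline{W}_{44}$, coming from the inverse pair of cusps $(-9,\pm 10\sqrt{11})$, is a valid instance of the relations the paper records.
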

\begin{proof}
By Propositions \ref{prop:w53canonicaldivisors} and
\ref{prop:w60canonicaldivisors}, each of the curves
$\overline{W}_{53}$ and $\overline{W}_{60}$ has a pair of holomorphic
one-forms which are distinct up to scale and vanish only at cusps.
The ratio of these two one-forms defines an algebraic function with
zeros and poles only at cusps.

The irreducible components of the remaining curves all have genus one
and our biregular models for these curves are elliptic curves.  By the
technique described in Section \ref{sec:cuspspin}, we locate their
cusps.  We find that the identity $(x,y) = (\infty,\infty)$ is a cusp
in each case and then search for (and find) relations among the cusps
in the group law by computing small integer combinations among triples
of cusps.  We have already given many such relations for
$\overline{W}_{44}$ in Table \ref{tab:w44cusps}.  For $D \in \left\{
57,65,73 \right\}$, we include a comment in {\sf certD.magma}
identifying the locations of several cusps and a relation among them.
\end{proof}
Theorem \ref{thm:principaldivisors} is an immediate corollary of
Proposition \ref{prop:principaldivisors}.
\begin{proof}[Proof of Theorem \ref{thm:principaldivisors}]
For a projective curve $C$ and a finite set $S \subset C$, the curve
$C$ has a principal divisor supported at $S$ if and only if $C
\setminus S$ admits a non-constant holomorphic map to $\cc^*$.  By
Proposition \ref{prop:principaldivisors}, for each $D \in \left\{
44,53,57,60,65,73 \right\}$, the curve $\overline{W}_D$ has a
principal divisor supported at its cusps $S_D \subset \overline{W}_D$,
so the curve $W_D = \overline{W}_D \setminus S_D$ admits a
non-constant holomorphic map to $\cc^*$.
\end{proof}

\appendix
\addtocounter{section}{19}
\section{Tables} \label{app:tables}
In this Appendix, we provide tables listing birational models of the
Hilbert modular surface $X_D$ (Table \ref{tab:bDrs}), the Weierstrass
curve $W_D$ (Tables \ref{tab:wDrs} and \ref{tab:wD0rs}) and the
homeomorphism type of $W_D$ (Table \ref{tab:homeotype}) for
fundamental discriminants $1 < D< 100$. For brevity, we only include
part of the first three tables; the full list of equations for the
Hilbert modular surfaces and the (components of) Weierstrass curves
are available in the computer files.

\begin{table}[h!]
{\centering
\begin{tabular}{c}
\toprule
\als
\large Algebraic models of Hilbert modular surfaces\\
\als
\midrule
\ppoly{b_5(r,s) = 972 r^5+324 r^4+27 r^3+4500 r^2 s+1350 r s-6250 s^2+108 s} \\ \als
\ppoly{b_8(r,s) = 16 r^3+32 r^2 s+24 r^2+16 r s^2-40 r s+12 r-s+2} \\ \als
\ppoly{b_{12}(r,s) = 27 r^2 s^2-27 r^2-18 r s^4+34 r s^2-16 r+s^8-2 s^6+s^4} \\ \als
\ppoly{b_{13}(r,s) = 128 r^3+27 r^2 s^2-656 r^2 s-192 r^2-108 r s^3+468 r s^2-568 r s+96 r-4 s^2+16 s-16} \\ \als
\ppoly{b_{17}(r,s) = 4 r^6+20 r^5-48 r^4 s+41 r^4+236 r^3 s+44 r^3+192 r^2 s^2+346 r^2 s+26 r^2+464 r s^2+144 r s+8 r-256 s^3+185 s^2+18 s+1} \\ \als
\ppoly{b_{21}(r,s) = 189 r^6-594 r^5 s+621 r^4 s^2-378 r^4-216 r^3 s^3+1116 r^3 s-954 r^2 s^2+205 r^2+184 r s^3-522 r s+16 s^4+349 s^2-16} \\ \als
\vdots \\ \als
\bottomrule
\als
\end{tabular}
\sfcaption{\label{tab:bDrs} {\sf The Hilbert modular surface $X_D$ is birational to the degree two cover of the $(r,s)$-plane branched along the curve $b_D(r,s)=0$.}}
}
\end{table}

\begin{table}[h!]
{\centering
\begin{tabular}{c}
\toprule
\als
\large Algebraic models of Weierstrass curves \\
\als
\midrule
\ppoly{w_{5}(r,s)=15 r+2}\\ \als
\ppoly{w_{8}(r,s)=4 r+4 s+1}\\ \als
\ppoly{w_{12}(r,s) = 27r + (8 - 12s - 9s^2 + 13s^3) } \\ \als
\ppoly{w_{13}(r,s)=26 r+108 s^2-252 s-9}\\ \als
\ppoly{w_{17}(r,s)=18 + 102r + 196r^2 + 136r^3 + 16r^4 + 288s + 544rs - 64r^2s - 1024s^2}\\ \als
\ppoly{w_{21}(r,s)=108 r^4-216 r^3 s-513 r^3+108 r^2 s^2+621 r^2 s-925 r^2-108 r s^2+1650 r s+205 r-225 s^2+795 s+500}\\ \als
\vdots \\ \als
\bottomrule
\als
\end{tabular}
\sfcaption{\label{tab:wDrs} For discriminants $1 < D < 100$, the Weierstrass curve is birational to the plane curve $w_D(r,s) = 0$.} 
}
\end{table}

\begin{table}[h!]
{\centering
\begin{tabular}{c}
\toprule
\als
{\large Algebraic models of reducible Weierstrass curves}  \\
\als
\midrule
\als
\ppoly{w_{17}^0(r,s) = (2+2 \sqrt{17})r^2 -(17-7 \sqrt{17})r +64 s-(9-3 \sqrt{17})} \\ \als
\ppoly{w_{33}^0(r,s) = 36r^3 -36r^2 s-(162+18 \sqrt{33})r^2 -36r s^2+(63+15 \sqrt{33})r s+(447+63 \sqrt{33})r + 36 s^3+(99+3 \sqrt{33}) s^2-(213+21 \sqrt{33}) s+(42+10 \sqrt{33}) } \\ \als
\ppoly{w_{41}^0(r,s) = 16r^3 s^2+8r^3 s+1r^3 +(864+160 \sqrt{41})r^2 s^2-(154-2 \sqrt{41})r^2 s-(-8 \sqrt{41})r^2 - (7680+1280 \sqrt{41})r s^2+(2288+272 \sqrt{41})r s+80r +(15872+2560 \sqrt{41}) s^2-(7200+1120 \sqrt{41}) s} \\ \als
\ppoly{w_{57}^0(r,s) = 576r^3 s^3+(864-96 \sqrt{57})r^3 s^2-(504+24 \sqrt{57})r^3 s-(792-72 \sqrt{57})r^3 -288r^2 s^3+(2304+288 \sqrt{57})r^2 s^2-(2892+348 \sqrt{57})r^2 s+(228+148 \sqrt{57})r^2 -144r s^3-(828+60 \sqrt{57})r s^2+(3294+486 \sqrt{57})r s- (3078+302 \sqrt{57})r +72 s^3-(270+30 \sqrt{57}) s^2+(1083+159 \sqrt{57}) s-(1083+95 \sqrt{57})} \\ \als
\vdots \\ \als
\bottomrule
\end{tabular}
\caption{\label{tab:wD0rs} {\sf For $1 < D < 100$ with $D \equiv 1 \bmod 8$, the curve $W_D^0$ is birational the curve $w_D^0(r,s)=0$ and $W_D^1$ is the Galois conjugate of $W_D^0$.}}
}
\end{table}

\begin{table}[h!]
\begin{equation*}
\begin{array}{ccccc@{\hskip 2em}ccccc}
\toprule
\als
D & g & e_2 & C & \chi & D & g & e_2 & C & \chi \\
\als
\midrule
 5^* & 0 & 1 & 1 & -\frac{3}{10} & 56 & 3 & 2 & 10 & -15 \\ \als
 8^* & 0 & 0 & 2 & -\frac{3}{4} & 57 & \{1,1\} & \{1,1\} & \{10,10\} & \left\{-\frac{21}{2},-\frac{21}{2}\right\} \\ \als
 12 & 0 & 1 & 3 & -\frac{3}{2} & 60 & 3 & 4 & 12 & -18 \\ \als
 13 & 0 & 1 & 3 & -\frac{3}{2} & 61 & 2 & 3 & 13 & -\frac{33}{2} \\ \als 
 17 & \{0,0\} & \{1,1\} & \{3,3\} & \left\{-\frac{3}{2},-\frac{3}{2}\right\} & 65 & \{1,1\} & \{2,2\} & \{11,11\} & \{-12,-12\} \\ \als
 21 & 0 & 2 & 4 & -3 & 69 & 4 & 4 & 10 & -18 \\ \als
 24 & 0 & 1 & 6 & -\frac{9}{2} & 73 & \{1,1\} & \{1,1\} & \{16,16\} & \left\{-\frac{33}{2},-\frac{33}{2}\right\} \\ \als
 28 & 0 & 2 & 7 & -6 & 76 & 4 & 3 & 21 & -\frac{57}{2} \\ \als
 29 & 0 & 3 & 5 & -\frac{9}{2}&  77 & 5 & 4 & 8 & -18 \\ \als
 33 & \{0,0\} & \{1,1\} & \{6,6\} & \left\{-\frac{9}{2},-\frac{9}{2}\right\} & 85 & 6 & 2 & 16 & -27 \\ \als
 37 & 0 & 1 & 9 & -\frac{15}{2} & 88 & 7 & 1 & 22 & -\frac{69}{2} \\ \als
 40 & 0 & 1 & 12 & -\frac{21}{2} & 89 & \{3,3\} & \{3,3\} & \{14,14\} & \left\{-\frac{39}{2},-\frac{39}{2}\right\} \\ \als
 41 & \{0,0\} & \{2,2\} & \{7,7\} & \{-6,-6\} & 92 & 8 & 6 & 13 & -30 \\ \als
 44 & 1 & 3 & 9 & -\frac{21}{2} & 93 & 8 & 2 & 12 & -27 \\ \als
 53 & 2 & 3 & 7 & -\frac{21}{2} & 97 & \{4,4\} & \{1,1\} & \{19,19\} & \left\{-\frac{51}{2},-\frac{51}{2}\right\} \\ \als
 \bottomrule
 \end{array}
\end{equation*}
\sfcaption{\label{tab:homeotype} For discriminants $D > 8$, the homeomorphism type of each irreducible component of $W_D$ is determined by its genus $g$, the number of cusps $C$ and the number of points of orbifold order two $e_2$.  For reducible $W_D$, the two irreducible components are homeomorphic and we list their topological invariants separately.   The curves $W_5$ and $W_8$ are isomorphic to the $(2,5,\infty)-$ and $(4,\infty,\infty)-$orbifolds respectively.}
\end{table}

\clearpage

\bibliographystyle{plain}

\footnotesize{
\noindent\textsc{Department of Mathematics, Stony Brook University, Stony Brook, NY 11794, USA} \\
\textit{E-mail address}: \texttt{thenav@gmail.com} \\
\textsc{Department of Mathematics, Rice University, 6100 Main St., Houston, TX 77005, USA} \\
\textit{E-mail address}: \texttt{ronen@rice.edu}
}
\end{document}